\numberwithin{equation}{section}
\newtheorem{theorem}{Theorem}[section]
\newtheorem{lemma}[theorem]{Lemma}
\newtheorem{proposition}[theorem]{Proposition}
\newtheorem{corollary}[theorem]{Corollary}
\theoremstyle{definition}
\newtheorem{definition}[theorem]{Definition}
\newtheorem{example}[theorem]{Example}
\newtheorem{question}[theorem]{Question}
\newtheorem{remark}[theorem]{Remark}
\newcommand{\id}{{\rm Id}}
\newcommand{\Ker}{{\rm Ker\,}}
\newcommand{\Bnd}{{\rm End}}
\newcommand{\FPdim}{{\rm FPdim}}
\newcommand{\Bxt}{\text{\rm Ext}}
\newcommand{\Hom}{{\rm Hom}}
\newcommand{\Id}{{\rm Id}}
\newcommand{\Aut}{\text{Aut}}
\newcommand{\Rep}{{\rm Rep}}
\newcommand{\Vect}{{\rm Vec}}
\newcommand{\A}{\mathscr{A}}
\newcommand{\B}{\mathscr{B}}
\newcommand{\C}{\mathscr{C}}
\newcommand{\D}{\mathscr{D}}
\newcommand{\M}{\mathscr{M}}
\newcommand{\N}{\mathscr{N}}
\renewcommand{\O}{\mathscr{O}}
\newcommand{\g}{\mathfrak{g}}
\newcommand{\ot}{\otimes}
\newcommand{\ben}{\begin{enumerate}}
\newcommand{\een}{\end{enumerate}}
\newcommand{\Mod}{\mbox{Mod}}
\newcommand{\Bimod}{\mbox{Bimod}}
\begin{document}

\title[Exact factorizations and extensions] {Exact factorizations and extensions of finite tensor categories}

\author{Tathagata Basak}
\address{Department of Mathematics, Iowa State University, Ames, IA 50011, USA} \email{tathagat@iastate.edu}

\author{Shlomo Gelaki}
\address{Department of Mathematics, Iowa State University, Ames, IA 50011, USA} \email{gelaki@iastate.edu}

\date{\today}

\keywords{finite tensor categories;
exact module categories; exact sequence of finite tensor categories; exact factorization of finite tensor categories}

\begin{abstract}
We extend \cite{G} to the nonsemisimple case. We define and study exact factorizations $\B=\A\bullet \C$ of a finite tensor category $\B$ into a product of two tensor subcategories $\A,\C\subset \B$, and relate exact factorizations of finite tensor categories to exact sequences of finite tensor categories with respect to exact module categories \cite{EG}. We apply our results to study exact factorizations of quasi-Hopf algebras, and extensions of a finite group scheme theoretical tensor category \cite{G2} by another one. We also provide several examples to illustrate our results. 
\end{abstract}

\maketitle

\tableofcontents

\section{Introduction}

Recall that a finite group $G$ admits an exact factorization $G=G_1G_2$ into a product of two subgroups $G_1,G_2\subset G$, if $G_1$ and $G_2$ intersect trivially and the order of $G$ is the product of the orders of $G_1$ and $G_2$. Equivalently, $G=G_1G_2$ is an exact factorization if every element $g\in G$ can be uniquely written in the form $g=g_1g_2$, where $g_1\in G_1$ and $g_2\in G_2$. Finite groups with exact factorization are fundamental objects in group theory, which naturally show up in many interesting results in the theories of Hopf algebras and tensor categories (see, e.g., \cite{BGM,K,M,Na,O}). 

In \cite{G} we introduced a categorical generalization of exact factorizations of finite groups for fusion categories $\B$ over $\mathbb{C}$. Namely, we 
say that $\B$ has an exact factorization $\B=\A\bullet \C$ into a product of two fusion subcategories $\A,\C\subset \B$, if $\FPdim(\B)= \FPdim(\A)\FPdim(\C)$ and $\A\cap \C=\Vect$. Then $\B=\A\bullet \C$ if and only if every simple object of $\B$ can be uniquely expressed in the form $X\ot Y$, where $X$ and $Y$ are simple objects of $\A$ and $\C$, respectively \cite[Theorem 3.8]{G}.
For example, exact factorizations $\B=\Vect(G_1)\bullet \Vect(G_2)$ are classified by pairs $(G,\omega)$, where $G$ is a group with exact 
factorization $G=G_1G_2$, and $\omega\in H^3(G,\mathbb{C}^{\times})$ that is trivial on $G_1$ and $G_2$.

We also related exact factorizations of fusion categories with exact sequences of fusion categories \cite[Theorem 4.1]{G}. Namely, let 
\begin{equation}\label{exseqfc}
\A\xrightarrow{\iota} \mathscr{B}\xrightarrow{F} \C\boxtimes \Bnd(\M)
\end{equation}
be an exact sequence of fusion categories  with respect to an exact indecomposable $\A$-module category $\mathscr{M}$ \cite{EG}. Then (\ref{exseqfc}) defines  
an exact factorization $\mathscr{B}^*_{\C\boxtimes \mathscr{M}}=\C\bullet \A^*_{\M}$, and vice versa, 
any exact factorization $\B=\A\bullet \C$ of fusion categories gives rise to an exact sequence of fusion categories with respect to any indecomposable $\A$-module category $\mathscr{M}$.

Our goal in this paper is to generalize the above mentioned results from \cite{G} to finite (not necessarily semisimple) tensor categories over an algebraically closed field of any characteristic $p\ge 0$. 

The structure of the paper is as follows. In \ref{prelim}, we recall some necessary background on exact module categories over finite tensor categories, exact sequences of finite tensor categories, and finite group scheme theoretical categories \cite{G2}. In \ref{efftcs}, we define and study factorizations $\B=\A\C$ and exact factorizations $\B=\A\bullet \C$ of a finite tensor category $\B$ into a product of two tensor subcategories $\A,\C\subset \B$. In \ref{efqhas}, we consider exact factorizations of quasi-Hopf algebra, which extends the notion of double cross coproduct Hopf algebras. 
In \ref{extftcs}, we prove Theorem \ref{mainnew}, which relates exact factorizations of finite tensor categories to exact sequences of finite tensor categories with respect to an indecomposable exact module. Finally, in \ref{extfgrsctcs}, we apply our results to finite group scheme theoretical categories. In particular, we show that any extension of a finite group scheme theoretical category by another one is {\em Morita equivalent} to a finite tensor category with an exact factorization into a product of two finite group scheme theoretical subcategories (Corollary \ref{gtext}). We also discuss some examples of exact factorizations and exact sequences (e.g., Corollary \ref{kacalg}). 

\section{Preliminaries}\label{prelim}
Throughout this paper, $k$ will denote an algebraically closed field of characteristic $p\ge 0$, and all categories are assumed to be $k$-linear abelian. We refer the reader to the book \cite{EGNO} as a general reference for the theory of finite tensor categories.

\subsection{Finite tensor categories}\label{Finite tensor categories} 
Let $\A$ be a finite tensor category over $k$, and let $\mathcal{O}(\A)$ denote the complete set of isomorphism classes of simple objects of $\A$. 
Let ${\rm Gr}(\A)$ be the Grothendieck ring of $\A$, and let ${\rm K_0}(\A)$ be
the group of isomorphism classes of projective objects in $\A$. Then ${\rm K_0}(\A)$ is a bimodule over ${\rm Gr}(\A)$, and we have a natural homomorphism (in general, neither surjective nor injective) $\tau_{\A}: {\rm K_0}(\A)\to {\rm Gr}(\A)$. 
Recall \cite[Subsection 2.4]{EO} that we have a character $\FPdim: {\rm Gr}(\A)\to \mathbb{R}$, attaching to $X\in \A$ the Frobenius-Perron dimension of $X$. 
Recall also that we have a virtual projective object $R_\A:=\sum_{X\in \mathcal{O}(\A)} \FPdim(X)P(X)$ in ${\rm K_0}(\A)\otimes_{\mathbb{Z}}\mathbb{R}$, where $P(X)$ denotes the projective cover of the simple object $X$, such that 
$$XR_\A=R_\A X=\FPdim(X) R_\A,\,\,\,X\in {\rm Gr}(\A).$$ Using $\tau_{\A}$, we may regard 
$R_\A$ as an element of ${\rm Gr}(\A)\otimes_{\mathbb{Z}}\mathbb{R}$. Following \cite[Subsection 2.4]{EO}, we set $\FPdim(\A):=\FPdim(R_\A)$.

For $X,Y,Z\in\mathcal{O}(\A)$, let 
$$N_{Y,Z}^X:=[Y\ot Z:X]=\dim\Hom(P(X),Y\ot Z).$$
Recall \cite[Proposition 6.1.2]{EGNO} that for any $X,Y \in\mathcal{O}(\A)$, we have  
\begin{equation}\label{projdec}
Y\ot P(X)=\bigoplus_{Z\in \mathcal{O}(\A)} N_{{}^*Y,Z}^X P(Z).
\end{equation}

Recall that a finite tensor category $\A$ is called {\em weakly-integral} if its FP dimension is an integer, {\em integral} if FP dimensions of objects are integers, {\em fusion} if it is semisimple, and {\em pointed} if every simple object is invertible. 
Recall also that $\A$ has the {\em Chevalley property} if the tensor product of semisimple objects is semisimple. In this case, the tensor subcategory of $\A$ generated by the simples of $\A$ is (the maximal) fusion subcategory of $\A$, which is denoted by $\A_{{\rm ss}}$.

\subsection{Exact module categories and algebras}\label{exmodcat}
Recall that a left $\A$-module category $\M$ is said to be {\em indecomposable} if it is not a direct sum of
two nonzero module categories.

Let $A$ be an algebra in $\A$, and let $\Mod(A)_{\A}$ denote the category of {\em right} $A$-modules in $\A$ \footnote{This notation is different than \cite{EGNO} where this would be denoted by $\Mod_{\A}(A)$.}. 
Recall that $A$ is called {\em exact} if $\Mod(A)_{\A}$ is an indecomposable exact $\A$-module category.

Let $\mathscr{M}$ be an indecomposable exact 
$\A$-module category\footnote{Here and below, by ``a module category'' we will mean a left module category, unless otherwise specified.} (such a category is always finite \cite[Lemma 3.4]{EO}). 
Let $\Bnd(\M)$ be the abelian category of {\em right exact} endofunctors of
$\mathscr{M}$, and let $\A_{\mathscr{M}}^*:=
\Bnd_{\A}(\mathscr{M})$ be the dual category of $\A$ with respect to $\M$, i.e., the category of  
$\A$-linear (necessarily exact \cite[Proposition 7.6.9]{EGNO}) endofunctors of $\mathscr{M}$. Recall that
composition of functors turns $\Bnd(\M)$ into a finite monoidal
category, and $\A_{\mathscr{M}}^*$ into a finite tensor category.

Let $\M$ be an exact left $\A$-module category, and let $\N$ be an exact right $\A$-module category. 
If $A_1$ and $A_2$ are algebras in $\A$ such that $\M={\rm Mod}(A_1)_{\A}$ and 
$\N={\rm Mod}_{\A}(A_2)$, then $\mathscr{N} \boxtimes_{\A}\mathscr{M}$
denotes the category of $(A_2,A_1)$-bimodules in $\A$, 
which can also be described as the category 
of left $A_2$-modules in $\M$, or the category 
of right $A_1$-modules in $\N$ (see \cite{DSS} and \cite[Section 7.8]{EGNO}).

\subsection{Exact sequences of finite tensor categories}\label{esftc} 
Let $\A,\B$ and $\C$ be finite tensor categories, 
let $\iota:\A\xrightarrow{1:1}\B$ be an injective tensor functor, and let $\M$ be an indecomposable exact module category over $\A$.
Let $F:\B\to \C\boxtimes \Bnd(\M)$ be a surjective (= dominant) exact monoidal functor such that 
$\iota(\A)=\Ker(F)$ (= the subcategory of $X\in \B$ such that $F(X)\in \Bnd(\M)$) \footnote{See \cite[Definition 1.8.3]{EGNO} for the definitions of injective and surjective functors.}.
Recall \cite{EG} that $F$ defines an {\em exact sequence of tensor categories
\begin{equation}\label{ES}
\A\xrightarrow{\iota} \mathscr{B}\xrightarrow{F} \C\boxtimes \Bnd(\M)
\end{equation}
with respect to $\mathscr{M}$} (= {\em $F$ is normal}),
if for every object $X\in \mathscr{B}$ there exists a subobject $X_0\subset X$ such that $F(X_0)$ 
is the largest subobject of $F(X)$ contained in $\Bnd(\M)\subset \C\boxtimes \Bnd(\M)$.  
In this case we will also say that $\B$ is an {\it extension of $\C$ by $\A$ with respect to $\M$}, and let $\Bxt(\C,\A,\M)$ denote the set of equivalence classes 
of exact sequences (\ref{ES}) with various functors $\iota,F$ \cite{EG}. 
For example, if $\B=\C\boxtimes \A$ and $\iota,F$ are the obvious functors, 
then $F$ defines an exact sequence with respect to any 
indecomposable exact $\A$-module category $\M$. 

By \cite[Theorem 3.4]{EG}, (\ref{ES}) defines an exact sequence of finite tensor categories if and only if $\FPdim(\B)=\FPdim(\A)\FPdim(\C)$.

Also, recall \cite[Theorem 3.6]{EG} that the natural functor 
\begin{equation}\label{equphistar}
\Phi_*: \mathscr{B}\boxtimes_{\A}\mathscr{M}\to \C\boxtimes \mathscr{M},
\end{equation} 
given by 
$$
\mathscr{B}\boxtimes_{\A}\mathscr{M}\xrightarrow{F\boxtimes_{\A} \id} \C\boxtimes \Bnd(\M)\boxtimes_{\A}\mathscr{M}=\C\boxtimes \mathscr{M}\boxtimes \A_{\mathscr{M}}^{*\rm op}\xrightarrow{\id\boxtimes \rho} \C\boxtimes \mathscr{M},
$$ 
is an equivalence of left $\B$-module and right $\A^{{*\rm op}}_{\M}$-module categories, where $\rho:\mathscr{M}\boxtimes \A_{\mathscr{M}}^{*\rm op} \to \mathscr{M}$ denotes the right action of $\A_{\mathscr{M}}^{*\rm op}$ on $\mathscr{M}$. (See \cite[Sections 2.4,2.5]{EG} and \cite{DSS} for more details.)

Finally, let $\mathscr{N}$ be an indecomposable exact module category over $\C$. 
Then $\mathscr{N}\boxtimes \mathscr{M}$ is an exact module category over 
$\C\boxtimes \Bnd(\M)$, and we have $(\C\boxtimes \Bnd(\M))_{\mathscr{N}\boxtimes \mathscr{M}}^*=\C_{\mathscr{N}}^*$. Then the dual sequence to (\ref{ES}), with respect to $\N\boxtimes \M$, 
\begin{equation}\label{ES1}
\A_{\mathscr{M}}^*\boxtimes \Bnd(\mathscr{N})\xleftarrow{\iota^*} \mathscr{B}_{\mathscr{N}\boxtimes \mathscr{M}}^*\xleftarrow{F^*} \C_{\mathscr{N}}^*
\end{equation}
is exact with respect to $\N$ \cite[Theorem 4.1]{EG}.

\subsection{Group scheme theoretical categories}\label{gscth} Let $G$ be a {\em finite} group scheme over $k$ (see, e.g., \cite[Section 2.1.1]{G2}), and let $\omega\in H^3(G,\mathbb{G}_m)$ be a normalized $3$-cocycle. That is, $\omega\in \mathscr{O}(G)^{\ot 3}$ is an
invertible element satisfying the associativity and 
normalization equations
$$({\rm Id}\ot {\rm Id}\ot \Delta)(\omega)(\Delta\ot {\rm Id}\ot {\rm Id})(\omega)=(1\ot \omega)({\rm Id}\ot \Delta\ot {\rm Id})(\omega)(\omega\ot 1),$$
$$(\varepsilon\ot {\rm Id}\ot {\rm Id})(\omega)=({\rm Id}\ot \varepsilon\ot {\rm Id})(\omega)=({\rm Id}\ot {\rm Id}\ot \varepsilon)(\omega)=1.$$
Equivalently, $\omega$ is a Drinfeld associator of the Hopf algebra $\O(G)$. 
Recall \cite[Section 3.1]{G2} that the category ${\rm Coh}(G)$, with tensor product given by convolution of sheaves and associativity constraint given by the action of $\omega$, is a finite tensor category denoted by ${\rm Coh}(G,\omega)$. Equivalently, ${\rm Coh}(G,\omega)$ is the tensor category ${\rm Rep}(\O(G),\omega)$ of finite dimensional representations of the quasi-Hopf algebra $(\O(G),\omega)$. (See \cite{EG2,G3} for  examples of nontrivial $3$-cocycles on non constant finite group schemes.)

Let $H\subset G$ be a closed subgroup scheme, and let $\psi\in C^2(H,\mathbb{G}_m)$ be a normalized $2$-cochain such that $d\psi=\omega_{|H}$. Let 
$$\M(H,\psi):={\rm Coh}^{(H,\psi)}(G,\omega)={\rm Mod}_{{\rm Coh}(G,\omega)}(k^{\psi}[H])$$ 
be the category of $(H,\psi)$-equivariant coherent sheaves on $(G,\omega)$ \cite[Section 3.2]{G2}. Then $\M(H,\psi)$ is an indecomposable exact module category over ${\rm Coh}(G,\omega)$ via convolution of sheaves, and the assignment $(H,\psi)\mapsto \M(H,\psi)$ is a bijection between conjugacy classes of pairs $(H,\psi)$ and equivalence classes of indecomposable exact module categories over
${\rm Coh}(G,\omega)$ \cite[Theorem 5.3]{G2}.

Let 
$$\C(G,\omega,H,\psi):={\rm Coh}(G,\omega)^*_{\M(H,\psi)}={\rm Bimod}_{{\rm Coh}(G,\omega)}(k^{\psi}[H]).$$  
A finite tensor category is called {\em group scheme theoretical} if it is tensor equivalent to some $\C(G,\omega,H,\psi)$ \cite[Definition 5.4]{G2}. 
For example, we have 
$$\C(G,\omega,1,1)={\rm Coh}(G,\omega),\,\,\,\,\,\C(G,1,G,1)={\rm Rep}(G).$$ 

By \cite[Theorem 5.7]{G2}, indecomposable exact module categories over $\C(G,\omega,H,\psi)$ are parametrized by conjugacy classes of pairs $(L,\eta)$, where $L\subset G$ is a closed subgroup scheme such that $\omega_{\mid L}=1$ and $\eta \in H^2(L,\mathbb{G}_m)$. Namely, the module category corresponding to a pair $(L,\eta)$ is the category $\N(L,\eta):={\rm Bimod}_{{\rm Coh}(G,\omega)}(k^{\psi}[H],k^{\eta}[L])$. We have,
\begin{equation}\label{dualgt}
\C(G,\omega,H,\psi)^*_{\N(L,\eta)}=\C(G,\omega,L,\eta).
\end{equation}
For example, $\Rep(G)^{*}_{\mathcal{N}(L,\eta)}=\C(G,1,L,\eta)$. Note that these are generalizations of Ostrik's results \cite{O} and 
reduce to them in the case of constant group schemes.

\subsection{Group scheme actions and equivariantization}\label{acgrsc}  
Retain the notation of \ref{gscth}. Let ${\rm Coh}_G$ denote the category of finite dimensional representations of $\O(G)$ endowed with the tensor product $\ot_{\O(G)}$. Then ${\rm Coh}_G$ is a monoidal category with unit object $\O(G)$.

Let $\C$ be a finite tensor category. 
Recall that the Deligne product ${\rm Coh}_G\boxtimes \C$ is a monoidal category, which can be interpreted as the category of $\O(G)$-modules in $\C$. Namely, the objects of ${\rm Coh}_G\boxtimes \C$ are pairs  
$(X,\phi)$, where $X\in\C$ and $\phi:\O(G)\to {\rm End}_{\C}(X)$ is an algebra map. A morphism $(X_1,\phi_1)\to (X_2,\phi_2)$ is a morphism $f:X_1\to X_2$ in $\C$ such that for every $\alpha\in\O(G)$, $\phi_2(\alpha)\circ f=f\circ \phi_1(\alpha)$ in ${\rm Hom}_{\C}(X_1,X_2)$.

We will denote ${\rm Coh}_G\boxtimes \C$ by ${\rm Coh}_G(\C)$ (so, ${\rm Coh}_G(\Vect)={\rm Coh}_G$). 

In particular, for every ${\rm V}=(V,{\rm m}_V)\in {\rm Coh}_G$ and $X\in\C$, we have the object ${\rm V}\boxtimes X:=(V\ot X,\phi)$ in ${\rm Coh}_G(\C)$, with $\phi$ being the composition
$$\O(G)\xrightarrow{{\rm m}_V} {\rm End}_{k}(V)\ot {\rm Id}_{X}\subset {\rm End}_{\C}(V\ot X).$$

Next recall that a $G$-action on $\C$ is a data $(T,\gamma)$, where 
$$T:\C\to {\rm Coh}_{\C}(G)$$ is a monoidal functor, and 
$$\gamma:({\rm Id}\boxtimes  T)\circ T\xrightarrow{\cong} ({\rm m}^*\boxtimes  {\rm Id})\circ T$$ is a functorial isomorphism preserving the monoidal structure in an appropriate sense, where ${\rm m}^*(V)=\mathscr{O}(G\times G)\ot _{\mathscr{O}(G)}V$ (see, e.g., \cite{DEN}). For example, the {\em trivial} $G$-action on $\C$ is given by $T_{{\rm triv}}(X)=\O(G)\boxtimes X$, $X\in\C$, and $\gamma_X=\iota\ot \Id_X$, where $\O(G)$ is viewed as the regular $\O(G)$-module and $\iota:\O(G)\ot \O(G)\xrightarrow{\cong}{\rm m}^*(\O(G))$ is the canonical isomorphism. 

Recall that the $G$-equivariantization tensor category $\C^G$ consists of pairs $(X,u)$, where $X\in \C$ and $u:T(X)\xrightarrow{\cong}T_{{\rm triv}}(X)=\O(G)\boxtimes X$,  
such that the diagram 
\begin{equation}\label{geqid}
\begin{CD}
(\Id\boxtimes T)(T(X))@>(\Id\boxtimes T)(u)>>(\Id\boxtimes T)(\O(G)\boxtimes X)=\O(G)\boxtimes T(X)\\
@V \gamma_X VV @V {\rm Id}_{\O(G)}\boxtimes u VV\\
({\rm m}^*\boxtimes {\rm Id})(T(X))@>({\rm m}^*\boxtimes {\rm Id})(u) >>({\rm m}^*\boxtimes {\rm Id})(\O(G)\boxtimes X)=\O(G)^{\boxtimes 2}\boxtimes X
\end{CD}
\end{equation}
commutes. The morphisms in $\C^G$ are the morphisms in $\C$ compatible with $u$. We have $\C^G$ is a finite tensor category.

Recall that ${\rm Rep}(G)$ can be identified with the tensor subcategory of $\C^G$ consisting of equivariant objects which are multiples of ${\bf 1}$ as objects of $\C$.

\begin{example}\label{repG}
Consider the trivial $G$-action $T_{{\rm triv}}$ on $\C:=\Vect$. An object $(X,u)$ in $\Vect ^G$ consists of a finite dimensional vector space $X$ and an $\O(G)$-linear isomorphism $u:\O(G)\ot X\xrightarrow{\cong}\O(G)\ot X$, such that diagram (\ref{geqid}) commutes. Since $\Hom_{\O(G)}(\O(G),\O(G))=\O(G)$, we have 
$$\Aut_{\O(G)}(\O(G),\O(G))=\O(G)^{\times}=\Hom(G,\mathbb{G}_m),$$
so we see that $u$ corresponds to an element of $\Hom_{{\rm Gr}}(G,{\rm GL}_k(X))$, i.e., to a representation of $G$ on $X$. 
Thus, $\Vect ^G\cong {\rm Rep}(G)$ as tensor categories.
\end{example}

\section{Exact factorizations of tensor categories}\label{efftcs}
In this section, $\B$ will denote a finite tensor category over $k$, $\A,\C$ will denote tensor subcategories of $\B$ \footnote{I.e., $\A,\C$ are full subcategories of $\B$, closed under taking subquotients, tensor products, and duality \cite[Definition 4.11.1]{EGNO}.}, and $\D:=\A\cap\C$ (see (\ref{Finite tensor categories})).

For $Y\in\mathcal{O}(\B)$, let $Y^D\in\mathcal{O}(\B)$ be such that $P(Y)^*=P(Y^D)$ \cite[Section 6.4]{EGNO}.  
Note that the natural surjective morphism \linebreak $\pi^{\B}_{Y^D}: P_{\B}(Y^D) \twoheadrightarrow Y^D$ induces an injective morphism 
\begin{equation*}
{}^*(Y^D) \xrightarrow{{}^*\pi^{\B}_{Y^D}} {}^*P_{\B}(Y^D) = {}^* ( P_{\B} (Y)^*) = P_{\B}(Y).
\end{equation*}

Let $X \in \mathcal{O}(\A)$, $Y \in \mathcal{O}(\C)$. If
$X={}^*(Y^D)$, then 
there is a nonzero morphism $P_{\A}(X) \to P_{\C}(Y)$ given by the composition
\begin{equation*}
P_{\A}(X) \xrightarrow{\pi^{\A}_{X}} X = {}^*(Y^D) 
\xrightarrow{{}^*\pi^{\C}_{Y^D}} P_{\C}(Y).
\end{equation*}
The lemma below says that if $\D$ is fusion then this is the only case when one has a nonzero morphism
in $\B$ from $P_{\A}(X)$ to $P_{\C}(Y)$. 

\begin{lemma}\label{factrorsgen}
For any $X\in\mathcal{O}(\A)$, $Y\in\mathcal{O}(\C)$ the following hold:
\begin{enumerate}
\item
If $\Hom_{\B}(P_{\A}(X),P_{\C}(Y))\ne 0$ then $X,\,{}^*(Y^D)$ belong to $\D$. 
\item 
If $\D$ is fusion, then $\dim\Hom_{\B}(P_{\A}(X),P_{\C}(Y))=\delta_{X^*,Y^D}$.
\end{enumerate}
\end{lemma}

\begin{proof}
(1) Let $0\ne f\in\Hom_{\B}(P_{\A}(X),P_{\C}(Y))$.
Let $Z$ be the image of $f$. Then $Z$ is a quotient (resp. subobject) 
of an object in $\A$ (resp. $\C$). So $Z$ belongs to
$\A \cap \C= \D$. 
One has nonzero surjections $P_{\A}(X) \twoheadrightarrow Z$ and
$P_{\C}(Y^D) \twoheadrightarrow Z^*$
(dual to the inclusion $Z \hookrightarrow P_{\C}(Y)$). 
Thus, $X$ is the unique simple quotient of $Z$ 
and $Y^D$  is the unique simple quotient $Z^*$, so 
${}^*(Y^D)$ is the unique simple subobject of $Z$.
So $X,{}^*(Y^D)\in \D$.

(2) Assume $\D$ is fusion and
$\Hom_{\B}(P_{\A}(X) , P_{\C}(Y)) \neq 0$.
Then the argument in (1) implies
$X = {}^*(Y^D)$.
Since $Z$ belongs to $\D$, 
it is a semisimple quotient of $P_{\A}(X)$.
But the unique semisimple quotient of $P_{\A}(X)$ is $X$,
so $Z=X$. Thus, any morphism $P_{\A}(X)$
to $P_{\C}(Y)$ factors through $X = {}^* (Y^D)$.
Hence,
$\Hom_{\B}(P_{\A}(X), X)$ is one dimensional \footnote{
Note that $\Hom_{\B}(P_{\A}(X), X) = \Hom_{\A}(P_{\A}(X), X)$
since $\A$ is a full subcategory of $\B$ and Hom space on the right hand side is one dimensional.},
and $\Hom_{\B}({}^*(Y^D), P_{\C}(Y))=\Hom_{\B}(P_{\C}(Y^D), Y^D)$
is one dimensional.
In other words, an arrow from $P_{\A}(X)$ to $X$, or from ${}^*(Y^D)$ to $P_{\C}(Y)$,
is unique up to a scalar.
Thus, any arrow from $P_{\A}(X)$ to $P_{\C}(Y)$ is unique up to a scalar as well.
\end{proof}

\begin{remark}
If $\D$ is not fusion, Lemma \ref{factrorsgen}(2) can fail. For example, take $\A=\B=\C=\Rep(\mathbb{Z}/p\mathbb{Z})$ in characteristic $p>0$. Then $P_{\A}({\bf 1})$ is the regular representation, so $\Hom_{\B}(P_{\A}({\bf 1}),P_{\C}({\bf 1}))=k[\mathbb{Z}/p\mathbb{Z}]$.
\end{remark}

\begin{corollary}\label{factrorsgen2}
Assume $\D=\Vect$. Then for every $X\in\mathcal{O}(\A)$ and $Y\in\mathcal{O}(\C)$, the following hold:
\begin{enumerate}
\item 
$\dim\Hom_{\B}(P_{\A}(X),P_{\C}(Y))=\delta_{X,{\bf 1}}\delta_{Y,{\bf 1}^D}$.
\item
For any $X'\in \mathcal{O}(\A)$, $Y'\in \mathcal{O}(\C)$ we have
$$\dim\Hom_{\B}(P_{\A}(X)\ot P_{\C}(Y),X'\ot Y')=\delta_{X,X'}\delta_{Y,Y'}$$
and
$$\dim\Hom_{\B}(X\ot Y,X'\ot Y')=\delta_{X,X'}\delta_{Y,Y'}.$$ 
\item
$X\ot Y$ is a brick \footnote{I.e., indecomposable, with $\Hom_{\B}(X\ot Y,X\ot Y)=k$.}
\end{enumerate}
\end{corollary}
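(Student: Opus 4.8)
The plan is to prove the three assertions of Corollary~\ref{factrorsgen2} in order, using Lemma~\ref{factrorsgen} as the main input together with standard adjunctions and the projectivity of the objects involved. Throughout I write $P_\A(X)\in\A\subset\B$, etc., and use that $\A,\C$ are full tensor subcategories.

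\textbf{Part (1).} Since $\D=\Vect$ is fusion, Lemma~\ref{factrorsgen}(2) gives $\dim\Hom_{\B}(P_{\A}(X),P_{\C}(Y))=\delta_{X^*,Y^D}$. It remains to see that $X^*\cong Y^D$ forces $X={\bf 1}$ and $Y={\bf 1}^D$. Indeed, by Lemma~\ref{factrorsgen}(1) a nonzero such morphism forces $X\in\D=\Vect$, so $X={\bf 1}$; then $X^*={\bf 1}$, and $Y^D={\bf 1}$ means $P_\C(Y)^*=P_\C({\bf 1}^D)^*=P_\C({\bf 1})$, hence $Y={\bf 1}^D$. Conversely when $X={\bf 1}$, $Y={\bf 1}^D$ one checks $X^*={\bf 1}=Y^D$, so the Kronecker delta evaluates to $1$; this matches the composition $P_\A({\bf 1})\to{\bf 1}\to P_\C({\bf 1}^D)$ displayed before the lemma, which is nonzero.

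\textbf{Part (2).} The key move is to convert the question into the one answered by Part~(1) via duality adjunctions inside $\B$. For the first identity, using that $P_\C(Y)$ has two-sided dual $P_\C(Y)^*=P_\C(Y^D)$ in $\B$ and that $Y'{}^*$ is a summand of $P_\B(Y'{}^*)=P_\C(Y'{}^*)$ only when... more directly: $\Hom_\B(P_\A(X)\ot P_\C(Y),X'\ot Y')\cong\Hom_\B(P_\A(X),(X'\ot Y')\ot P_\C(Y)^*)=\Hom_\B(P_\A(X),X'\ot Y'\ot P_\C(Y^D))$, and by the projective-tensor decomposition (\ref{projdec}), $Y'\ot P_\C(Y^D)$ is a direct sum of $P_\C(Z)$'s with multiplicity $N^{Y^D}_{{}^*Y',Z}$ in $\C$; since $P_\A(X)$ is projective in $\B$, $\Hom_\B(P_\A(X),X'\ot P_\C(Z))$ is computed by further adjunction as $\Hom_\B(P_\A(X)\ot\,{}^*X',P_\C(Z))$, and $P_\A(X)\ot\,{}^*X'$ is a sum of $P_\A(W)$'s by (\ref{projdec}) again in $\A$. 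Applying Part~(1) to each summand collapses everything to the requirement $W={\bf 1}$ and $Z={\bf 1}^D$; tracking the multiplicities $N^{X}_{\cdot,\cdot}$ in $\A$ and $N^{Y^D}_{\cdot,\cdot}$ in $\C$ and using $\mathbf 1\otimes\mathbf 1=\mathbf 1$ forces $X=X'$, $Y=Y'$, each with multiplicity exactly one. For the second identity, one first shows $X\ot Y$ is the (unique) simple quotient of $P_\A(X)\ot P_\C(Y)$ — its top — and that ${}^*(X'\ot Y')$ is its unique simple sub, then applies the first identity of Part~(2) together with the fact that a map $X\ot Y\to X'\ot Y'$ lifts along $P_\A(X)\ot P_\C(Y)\twoheadrightarrow X\ot Y$ and descends along $X'\ot Y'\hookleftarrow$ its projective cover, exactly as in the proof of Lemma~\ref{factrorsgen}(2).

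\textbf{Part (3).} Indecomposability of $X\ot Y$: the endomorphism algebra $\Hom_\B(X\ot Y,X\ot Y)$ is one-dimensional by the second identity of Part~(2) with $X'=X$, $Y'=Y$, hence local, so $X\ot Y$ is a brick; a brick is in particular indecomposable. This is immediate once Part~(2) is in hand.

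The main obstacle is Part~(2): the bookkeeping of the two layers of the projective-object tensor-product formula (\ref{projdec}) — one in $\A$ coming from $P_\A(X)\ot{}^*X'$ and one in $\C$ coming from $Y'\ot P_\C(Y^D)$ — and checking that after collapsing via Part~(1) the surviving multiplicity is exactly $\delta_{X,X'}\delta_{Y,Y'}$ rather than something larger. The subtlety is that ${\bf 1}$ may not be projective in $\A$ or $\C$ (we are in the nonsemisimple setting), so one must be careful to use $P_\A(X)$'s projectivity \emph{in}~$\B$ and the adjunctions rather than naively splitting $X\ot Y$; the identity ${\bf 1}\otimes{\bf 1}={\bf 1}$ and the uniqueness of simple tops/socles of projective covers are what make the deltas come out right. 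The passage from the $P$-$P$ statement to the simple-simple statement, as in Lemma~\ref{factrorsgen}(2), is routine but must be spelled out to justify Part~(3)'s use of ``$\Hom(X\ot Y,X\ot Y)=k$''.
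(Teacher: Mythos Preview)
Your approach to Part~(1), the first identity of Part~(2), and Part~(3) is essentially the paper's: reduce via duality adjunctions and the decomposition~(\ref{projdec}) to Part~(1), then invoke Lemma~\ref{factrorsgen}. (A minor slip: after moving $X'$ across, you should get $\Hom_\B({}^*X'\ot P_\A(X),P_\C(Z))$, not $\Hom_\B(P_\A(X)\ot{}^*X',P_\C(Z))$; the multiplicities still collapse correctly.)

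There is, however, a genuine gap in your treatment of the \emph{second} identity of Part~(2). You propose to ``first show $X\ot Y$ is the (unique) simple quotient of $P_\A(X)\ot P_\C(Y)$'', but at this point in the argument you cannot: simplicity of $X\ot Y$ is the content of Proposition~\ref{simple}, which requires the additional hypothesis that $\B$ is exact over $\A\boxtimes\C^{\rm op}$ (so that $P_\A(X)\ot P_\C(Y)$ is projective in $\B$ and Lemma~\ref{basak1} applies). Invoking it here would be circular. The remark about ``descending along $X'\ot Y'\hookleftarrow$ its projective cover'' is also not a valid operation.

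The paper's argument avoids this entirely. One half of your sketch is already enough: precomposition with the epimorphism $\pi=\pi^\A_X\ot\pi^\C_Y$ gives an injection
\[
\Hom_\B(X\ot Y,\,X'\ot Y')\;\hookrightarrow\;\Hom_\B(P_\A(X)\ot P_\C(Y),\,X'\ot Y'),
\]
so the left-hand side has dimension $\le\delta_{X,X'}\delta_{Y,Y'}$ by the first identity. For nonvanishing when $X=X'$, $Y=Y'$ the identity map suffices (the paper phrases this as: any $f\in\Hom_\B(X\ot Y,X\ot Y)$ has $f\circ\pi$ proportional to $\pi$, hence $f$ is a scalar). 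Alternatively, as the paper also notes, after the adjunction $\Hom_\B(X\ot Y,X'\ot Y')\cong\Hom_\B(X'^*\ot X,\,Y'\ot Y^*)$ every morphism factors through its image, which lies in $\A\cap\C=\Vect$; this forces $X=X'$ and $Y=Y'$ directly, with no appeal to simplicity of $X\ot Y$.
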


\begin{proof}
(1) Lemma \ref{factrorsgen}(2) implies $X={}^*(Y^D)=\mathbf{1}$.
So, $Y^D = \mathbf{1}^* = \mathbf{1}$.
So, $Y^{**} = (Y^D)^D = \mathbf{1}^D$.
So, $Y = (\mathbf{1}^D)^{**} = \mathbf{1}^D$
since double dual of an invertible object is isomorphic to itself \cite[2.11.3]{EGNO}.

(2) By \cite[2.10.8]{EGNO}, we have $$\Hom_{\B}(P_{\A}(X)\ot P_{\C}(Y),X'\ot Y')=\Hom_{\B}(X'^*\ot P_{\A}(X),Y'\ot P_{\C}(Y)^*).$$
Since $X'^*\ot P_{\A}(X)$ and $Y'\ot P_{\C}(Y)^*$ are projective in $\A$ and $\C$, respectively, it follows from (\ref{projdec}) and (1) that 
\begin{eqnarray*}
\lefteqn{\dim\Hom_{\B}(X'^*\ot P_{\A}(X),Y'\ot P_{\C}(Y)^*)}\\
& = & \sum_{Z\in \mathcal{O}(\A),\,W\in \mathcal{O}(\C)} N_{X',Z}^X N_{{}^*Y',W}^{Y^D}\dim\Hom_{\B}(P_{\A}(Z),P_{\C}(W))\\
& = & N_{X',{\bf 1}}^X N_{{}^*Y',{\bf 1}^D}^{Y^D}=\delta_{X,X'}\delta_{Y,Y'},
\end{eqnarray*}
where the last equality follows from ${}^*Y'\ot {\bf 1}^D=(Y')^D$ \cite[Lemma 6.4.2]{EGNO}. This proves the first equality.

Since every morphism in $\Hom_{\B}(X'^*\ot X,Y'\ot Y^*)$ factors through its image, which is in 
$\A \cap \C=\Vect$, $\Hom_{\B}(X'^*\ot X,Y'\ot Y^*)\ne 0$ if and only if $X=X'$, $Y=Y'$. Moreover, since $\Hom_{\B}(P_{\A}(X)\ot P_{\C}(Y),X\ot Y)$ is spanned by the surjective homomorphism $\pi:=\pi^{\A}_{X}\ot \pi^{\C}_{Y}$,   it follows that for every $f$ in $\Hom_{\B}(X\ot Y,X\ot Y)$, $f\circ \pi$ and $\pi$ are proportional, which proves the second equality.

Finally by (2), $\Hom_{\B}(X\ot Y,X\ot Y)=k$, which implies (3).
\end{proof}

\begin{question}
In the setting of Corollary \ref{factrorsgen2}, is it true that $X\ot Y$ is simple? In the fusion case, the answer is yes \cite[Lemma 3.7]{G}.
\end{question}

Before we proceed, we will need the following simple lemma.

\begin{lemma}\label{basak1}
Let $W$ be a nonzero quotient of a projective object $P$ in $\B$, such that 
${\rm dim}{\rm Hom}(P, W) = 1$. Then $W$ is simple.
\end{lemma}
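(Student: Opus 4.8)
The plan is to show that if $W$ has a nontrivial proper subobject, then $\Hom(P,W)$ must be at least two-dimensional, contradicting the hypothesis. Since $P$ is projective and $W$ is a quotient of $P$, we have a surjection $\pi\colon P\twoheadrightarrow W$; projectivity of $P$ means $\Hom(P,-)$ is exact, so applying it to any short exact sequence $0\to W'\to W\to W''\to 0$ gives a short exact sequence $0\to \Hom(P,W')\to \Hom(P,W)\to \Hom(P,W'')\to 0$ of vector spaces.

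First I would reduce to the case where $W''$ is simple: let $W''$ be a simple quotient of $W$ and $W'$ the kernel of $W\twoheadrightarrow W''$. The composite $P\twoheadrightarrow W\twoheadrightarrow W''$ is nonzero and surjective, so $\Hom(P,W'')\ne 0$. If $W$ is not simple, then $W'\ne 0$; since $P$ is projective it covers $W'$ as well — more precisely, $\Hom(P,W')\ne 0$ because the projective cover $P(W')$ of any nonzero object receives a nonzero (in fact surjective) map, and $P$ surjects onto $W$ which surjects onto... wait, that direction is not immediate. The cleaner argument: apply $\Hom(P,-)$ to $0\to W'\to W\to W''\to 0$ to get $\dim\Hom(P,W)=\dim\Hom(P,W')+\dim\Hom(P,W'')$. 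Since $\dim\Hom(P,W)=1$ and $\dim\Hom(P,W'')\ge 1$, we get $\dim\Hom(P,W'')=1$ and $\Hom(P,W')=0$. But then the surjection $\pi\colon P\twoheadrightarrow W$ factors: composing with $W\to W''$ is nonzero, yet I want to derive that $\pi$ itself lands in... Actually the contradiction is direct: $\pi\in\Hom(P,W)$ is surjective, hence nonzero, and its image is all of $W$, not contained in $W'$; but the image of $\pi$ composed into $W''$ is nonzero, consistent. Let me instead use that $\Hom(P,W')=0$ forces every map $P\to W$ to have image with zero intersection appropriately — concretely, if $\Hom(P,W')=0$ then the surjection $P\twoheadrightarrow W$ cannot exist unless $W'=0$: indeed $\pi^{-1}(W')\subsetneq P$ is a proper subobject, and since $P$ has a unique maximal subobject (its radical) with simple... no, $P$ need not be indecomposable.

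So the right formulation: if $W$ is not simple it has a proper nonzero subobject $W'$, and since $W$ is a quotient of the projective $P$, and $W/W'$ is a nonzero proper quotient, projectivity gives $\dim\Hom(P,W)=\dim\Hom(P,W')+\dim\Hom(P,W/W')\ge 1+1=2$, provided both summands are nonzero. The summand $\Hom(P,W/W')$ is nonzero because $W/W'$ is a nonzero quotient of $W$ hence of $P$. The summand $\Hom(P,W')$: choose $W'$ to be a \emph{simple} subobject of $W$ (which exists since $W\ne 0$ and $\B$ is a finite abelian category), and note $P$ surjects onto its own head, which contains every simple appearing in $\mathrm{soc}(W)$... hmm, not quite. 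The truly safe route is: pick $W'\subset W$ maximal proper, so $W/W'$ is simple, nonzero quotient of $P$, giving $\Hom(P,W/W')\ne 0$; and $W'\ne 0$ by the non-simplicity assumption, and $W'$ is a quotient of $P$? Not obviously. Instead reverse: pick $W'\subset W$ \emph{simple} (minimal nonzero); then $W'$ is a subobject of $W$ which is a quotient of $P$; I claim $\Hom(P,W')\ne 0$. Since $P$ is projective, $\Hom(P,W')=0$ would mean... apply $\Hom(P,-)$ to $0\to W'\to W$: this only gives left-exactness, $0\to\Hom(P,W')\to\Hom(P,W)$, no contradiction yet.

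The main obstacle is exactly this bookkeeping: producing a \emph{sub}object of $W$ on which $P$ has a nonzero Hom. I expect the clean fix is to work on the quotient side throughout: $W$ not simple $\Rightarrow$ there is a composition series $0=W_0\subset W_1\subset\cdots\subset W_n=W$ with $n\ge 2$; then $W/W_1$ is a nonzero proper quotient of $P$, so $\Hom(P,W/W_1)\ne 0$, and $W_1$ is a quotient of $\ker(P\to W/W_1)$... no. Cleanest: since $P$ is projective and $W\ne 0$, the natural map $P\to W$ is surjective, and $\Hom(P,W)$ surjects (by projectivity) onto $\Hom(P,S)$ for \emph{every} simple quotient $S$ of $W$ via the exact sequence argument, and moreover for a simple quotient $S$ with kernel $K$, $\dim\Hom(P,W)=\dim\Hom(P,K)+\dim\Hom(P,S)$ with $\dim\Hom(P,S)\ge 1$; if $W$ is non-simple pick $S$ so that $K\ne 0$, then also $\dim\Hom(P,K)\ge 1$ because $K$, being nonzero, has a simple quotient $S'$, and the composite $P\twoheadrightarrow W\to ?$ — here's the point — we don't get a map $P\to K$ from $\pi$. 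I think the author's intended argument is simply: $\pi\colon P\twoheadrightarrow W$; if $U\subsetneq W$ is a nonzero subobject, then $\pi^{-1}(U)\subsetneq P$, and composing $P\xrightarrow{\pi}W$ with $W\twoheadrightarrow W/U$ gives a second element of $\Hom(P,W/U)$... I will present the proof via: $W/U$ nonzero proper quotient of $P$ and $U$ nonzero gives two linearly independent maps $P\to W$ (namely $\pi$ and a lift of $P\twoheadrightarrow W/U$ through the projective cover, pushed into $W$) — but I will state it as: apply the exact functor $\Hom(P,-)$ to $0\to U\to W\to W/U\to 0$, obtaining $\dim\Hom(P,W)=\dim\Hom(P,U)+\dim\Hom(P,W/U)$, and argue both terms are positive, the term $\Hom(P,U)$ being positive because $P(U)\ne 0$ is a summand of a projective mapping onto $U$ and $P\to W$... — since this is a plan, I flag that the one genuinely delicate point is verifying $\Hom(P,U)\ne 0$, which should follow from the surjectivity of $\pi$ together with the fact that in a finite tensor category every nonzero subobject of a quotient of $P$ is itself ``seen'' by $P$ (equivalently, $\mathrm{top}(P)$ meets $\mathrm{soc}$ appropriately), and this is where I would spend the bulk of the real write-up; modulo that, the statement $W$ simple follows immediately.
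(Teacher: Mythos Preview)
Your instinct that proving $\Hom(P, U) \neq 0$ for a nonzero subobject $U \subset W$ is the crux is correct --- but this step is not merely delicate, it fails in general, and with it the lemma as stated. Take $\B = \Rep(S_3)$ over a field of characteristic $3$: the simples are $\mathbf{1}$ and the sign representation $\mathrm{sgn}$, and $P(\mathbf{1})$ is uniserial of length $3$ with Loewy layers $\mathbf{1}, \mathrm{sgn}, \mathbf{1}$ from top to bottom. Let $P = P(\mathbf{1})$ and let $W$ be the length-$2$ quotient of $P(\mathbf{1})$ by its simple socle. Then $W$ is a nonzero quotient of $P$ with $\dim\Hom(P, W) = [W:\mathbf{1}] = 1$, yet $W$ is not simple. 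In your additive splitting $\dim\Hom(P,W)=\dim\Hom(P,U)+\dim\Hom(P,W/U)$ with $U = \mathrm{sgn} \subset W$, one has $\Hom(P(\mathbf{1}), \mathrm{sgn}) = 0$, so the hoped-for lower bound $\dim\Hom(P,W)\ge 2$ does not materialize.

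For comparison, the paper takes a different route: it writes $P = P(Z) \oplus Q$ with $\Hom(Q, W) = 0$, factors the surjection through $P(Z) \twoheadrightarrow W$, and uses the injection $\Hom(W, Z') \hookrightarrow \Hom(P(Z), Z')$ to deduce that the unique simple quotient of $W$ is $Z$, whence the projective cover of $W$ is $P(Z)$. The paper then concludes ``hence $W = Z$ is simple,'' but this last step is a non sequitur: knowing that $P(Z)$ is the projective cover of $W$ only says that $W$ has simple top $Z$, not that $W$ is simple, and the same $S_3$ example shows the conclusion can fail. So the gap you flagged is genuine and is not an artifact of your approach; the paper's argument shares it at a different place.
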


\begin{proof}
Let $f: P \twoheadrightarrow W$ be a nonzero surjection.
Writing $P$ as a direct sum of indecomposable projectives, 
and computing multiplicity with $W$, we find that
there exists a simple object $Z \in\mathcal{O}(\B)$ and an object $Q\in\B$, such that
$P = P(Z) \oplus Q$, ${\rm Hom}(Q, W) = 0$, 
and ${\rm dim}{\rm Hom}(P(Z), W) = 1$. It follows that $f$ factors through $P(Z)$ 
to give a surjection $P(Z) \twoheadrightarrow W$.

Let $Z' \in \mathcal{O}(\B)$.
The surjection $P(Z) \twoheadrightarrow W$ induces 
an injection ${\rm Hom}(W, Z') \hookrightarrow {\rm Hom}(P(Z), Z')$. 
It follows that ${\rm Hom}(W, Z') = 0$ for all $Z'\ne Z$, and 
${\rm Hom}(W, Z)$ is at most one dimensional. 
Since $W$ has a simple quotient, this simple quotient must be $Z$.
So there exists $P(W) \twoheadrightarrow P(Z)$.
So $P(W)= P(Z)$ and hence
$W =Z$ is simple.
\end{proof}

Let us now consider $\B$ as a left module category over $\A\boxtimes \C^{{\rm op}}$ in the natural way.

\begin{proposition}\label{simple}
If $\B$ is exact over $\A\boxtimes \C^{{\rm op}}$ and $\D=\Vect$, then for any $X\in \mathcal{O}(\A),Y\in \mathcal{O}(\C)$ the following hold:
\begin{enumerate}
\item  
$X\ot Y\in \mathcal{O}(\B)$, and it determines $X$, $Y$.
\item
$P_{\B}(X\otimes Y)=P_{\A}(X)\otimes P_{\C}(Y)$.
\end{enumerate} 
\end{proposition}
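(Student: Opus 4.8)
The plan is to leverage exactness of $\B$ over $\A \boxtimes \C^{\op}$ to reduce the question to the computations already done in Corollary \ref{factrorsgen2}. First I would note that since $\B$ is exact over $\A \boxtimes \C^{\op}$, for any projective object $P$ of $\A \boxtimes \C^{\op}$ — in particular $P_{\A}(X) \boxtimes P_{\C}(Y)$, which is projective in $\A \boxtimes \C^{\op}$ since $P_\A(X)$ is projective in $\A$ and $P_\C(Y)$ is projective in $\C^{\op}$ — the object $(P_{\A}(X) \boxtimes P_{\C}(Y)) \cdot \mathbf{1}_\B = P_{\A}(X) \ot P_{\C}(Y)$ is projective in $\B$ (exact module categories send projectives of the acting category to projectives; see \cite[Section 7.6]{EGNO}). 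So $P_{\A}(X) \ot P_{\C}(Y)$ is a projective object of $\B$, and it visibly surjects onto $X \ot Y$ via $\pi^{\A}_X \ot \pi^{\C}_Y$.

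Next I would compute $\dim \Hom_{\B}(P_{\A}(X) \ot P_{\C}(Y), X \ot Y)$. By Corollary \ref{factrorsgen2}(2), applied with $X' = X$ and $Y' = Y$, this dimension equals $\delta_{X,X}\delta_{Y,Y} = 1$. Therefore $X \ot Y$ is a nonzero quotient of the projective object $P_{\A}(X) \ot P_{\C}(Y)$ with one-dimensional $\Hom$ from that projective; Lemma \ref{basak1} then immediately gives that $X \ot Y$ is simple. This proves the first assertion of (1). That $X \ot Y$ determines $X$ and $Y$ is precisely the second displayed equality of Corollary \ref{factrorsgen2}(2): if $X \ot Y \cong X' \ot Y'$ in $\mathcal{O}(\B)$ then $\Hom_{\B}(X \ot Y, X' \ot Y') \neq 0$, forcing $X \cong X'$ and $Y \cong Y'$.

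For (2), I would argue as follows. We have a surjection $P_{\A}(X) \ot P_{\C}(Y) \twoheadrightarrow X \ot Y$ from a projective object. Since $X \ot Y$ is simple by (1), its projective cover $P_{\B}(X \ot Y)$ is a direct summand of $P_{\A}(X) \ot P_{\C}(Y)$, with multiplicity $\dim \Hom_{\B}(P_{\A}(X) \ot P_{\C}(Y), X \ot Y) = 1$. So $P_{\A}(X) \ot P_{\C}(Y) \cong P_{\B}(X \ot Y) \oplus Q$ for some projective $Q$ with $\Hom_{\B}(Q, X \ot Y) = 0$. To conclude $Q = 0$, I would compare Frobenius–Perron dimensions: $\FPdim(P_{\A}(X) \ot P_{\C}(Y)) = \FPdim(P_{\A}(X))\FPdim(P_{\C}(Y))$, and I want this to equal $\FPdim(P_{\B}(X \ot Y))$.

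The main obstacle is this last dimension-count step, since in the nonsemisimple setting $\FPdim$ of a projective cover is not as transparent as in the fusion case. The cleanest route is probably to run the same summand argument with $Y$ and $X$ ranging over \emph{all} of $\mathcal{O}(\A)$ and $\mathcal{O}(\C)$ simultaneously and invoke a counting/linear-independence argument: the objects $\{X \ot Y\}$ are pairwise non-isomorphic simples of $\B$ by (1), the objects $\{P_{\A}(X) \ot P_{\C}(Y)\}$ are projective, and by exactness the functor $- \ot \mathbf{1}_\B$ (equivalently the module action) is exact and faithful, so that $R_{\A} \ot R_{\C}$ maps to a virtual projective of $\B$ with the right FP-dimension; matching coefficients of $P_{\B}(X \ot Y)$ in $R_{\A} R_{\C} = \sum_{X,Y} \FPdim(X)\FPdim(Y)\, P_{\A}(X) \ot P_{\C}(Y)$ against those in $R_{\B} = \sum_{T \in \mathcal{O}(\B)} \FPdim(T) P_{\B}(T)$, together with $\FPdim(X \ot Y) = \FPdim(X)\FPdim(Y)$, forces $P_{\A}(X) \ot P_{\C}(Y) = P_{\B}(X \ot Y)$ on the nose. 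Alternatively, and more directly, one can observe that $\FPdim(\B) = \FPdim(\A)\FPdim(\C)$ holds here (this is the content of $\B$ being exact over $\A \boxtimes \C^{\op}$ with $\D = \Vect$, via \cite[Theorem 3.4]{EG} or the relation to exact sequences), and then the total FP-dimension accounting $\sum_{X,Y}\FPdim(X \ot Y)\FPdim(P_{\A}(X) \ot P_{\C}(Y)) = \FPdim(\A)\FPdim(\C) = \FPdim(\B) = \sum_{T}\FPdim(T)\FPdim(P_{\B}(T))$ leaves no room for a nonzero $Q$.
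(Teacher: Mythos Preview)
Your proof of (1) is correct and matches the paper's argument essentially verbatim.

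For (2), however, there is a genuine gap. Both of your proposed endgames rely on the identity $\FPdim(\B)=\FPdim(\A)\FPdim(\C)$, and this is \emph{not} among the hypotheses of the proposition, nor does it follow from them. The assumptions are only that $\B$ is exact over $\A\boxtimes\C^{\op}$ and $\A\cap\C=\Vect$; nothing forces $\B=\A\C$. For a concrete instance, take $\A=\C=\Vect$ inside any fusion category $\B$ with $\FPdim(\B)>1$: then $\B$ is (trivially) exact over $\Vect\boxtimes\Vect^{\op}$ and $\D=\Vect$, yet $\FPdim(\B)\neq\FPdim(\A)\FPdim(\C)$. So invoking \cite[Theorem~3.4]{EG} is illegitimate here, and your ``matching against $R_\B$'' sketch has the same defect: $R_\A R_\C$ need not be proportional to $R_\B$, and the global dimension count cannot close. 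Concretely, after writing $P_\A(X)\otimes P_\C(Y)=P_\B(X\otimes Y)\oplus Q$, you have only shown that $Q$ contains no summand of the form $P_\B(X'\otimes Y')$; nothing you have said excludes summands $P_\B(Z)$ for simples $Z$ not of this shape.

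The paper closes this gap by a direct composition-factor argument, avoiding any global dimension comparison. By exactness of the tensor product, every composition factor of $P_\A(X)\otimes P_\C(Y)$ is a composition factor of some $X'\otimes Y'$ with $X'$ (resp.\ $Y'$) a composition factor of $P_\A(X)$ (resp.\ $P_\C(Y)$); by part (1) each such $X'\otimes Y'$ is already simple. Hence every simple quotient of $P_\A(X)\otimes P_\C(Y)$ is of the form $X'\otimes Y'$, and then Corollary~\ref{factrorsgen2}(2) gives $\dim\Hom_\B(P_\A(X)\otimes P_\C(Y),X'\otimes Y')=\delta_{X,X'}\delta_{Y,Y'}$, so the unique simple quotient is $X\otimes Y$. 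Thus $P_\A(X)\otimes P_\C(Y)$ is an indecomposable projective covering $X\otimes Y$, i.e., it equals $P_\B(X\otimes Y)$. This is the missing idea in your argument.
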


\begin{proof}
(1) Since $\B$ is exact over $\A\boxtimes \C^{{\rm op}}$,  
$P_{\A}(X)\otimes P_{\C}(Y)$ is projective in $\B$. 
Since $X\otimes Y$ is a nonzero quotient of $P_{\A}(X)\otimes P_{\C}(Y)$, with ${\rm Hom}(P_{\A}(X)\otimes P_{\C}(Y),X\otimes Y)$ one dimensional by Corollary \ref{factrorsgen2}(2), it follows from Lemma \ref{basak1} that $X\otimes Y$ is simple. The claim that $X\otimes Y$ determines $X,Y$ follows from the second equation in Corollary \ref{factrorsgen2}(2).
 
(2) By exactness, $P_{\A}(X)\otimes P_{\C}(Y)$ is a projective object in $\B$. 
Since any composition factor of $P_{\A}(X)\otimes P_{\C}(Y)$ is a composition factor of 
$X’ \otimes Y’$ for some composition factors
$X’$ of $P_{\A}(X)$ and $Y’$ of $P_{\C}(Y)$, it follows from (1) that all composition factors of $P_{\A}(X)\otimes P_{\C}(Y)$ are of the form $X'\otimes Y'$, where $X'\in \mathcal{O}(\A)$, $Y'\in \mathcal{O}(\C)$ are composition factors of $P_{\A}(X)$, $P_{\C}(Y)$, respectively. Hence, by  Corollary \ref{factrorsgen2}(2), $P_{\A}(X)\otimes P_{\C}(Y)$ is the projective cover of $X\ot Y$ in $\B$, as claimed. 
\end{proof}

If $\B$ is exact over $\A\boxtimes \C^{{\rm op}}$, we let $\A\C\subset \B$ denote the indecomposable exact $\A\boxtimes \C^{{\rm op}}$-module subcategory of $\B$ generated by ${\bf 1}$ (see \cite[Section 7.6]{EGNO}). Let 
$$R_{\A\C}:=\sum_{X\in \mathcal{O}(\A\C)}\FPdim(X)P_{\A\C}(X),$$
and set 
$$\FPdim(\A\C):=\sum_{X\in \mathcal{O}(\A\C)}\FPdim(X)\FPdim(P_{\A\C}(X)).$$

Recall that for every $Y\in \mathcal{O}(\C)$, $\FPdim(Y)=\FPdim(Y^D)$ \cite[Lemmas 6.4.1,6.4.2]{EGNO}.

\begin{proposition}\label{fpdimtriv} 
Assume $\B$ is exact over $\A\boxtimes \C^{{\rm op}}$, and $\D=\A\cap\C$ is fusion. Then 
$$\FPdim(\A)\FPdim(\C)=\FPdim(\A\C)\FPdim(\D).$$
\end{proposition}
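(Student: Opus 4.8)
The plan is to evaluate the Frobenius--Perron dimension of $R_\A\ot R_\C$ in two different ways. I would begin with the structural picture. Regard $\B$ as an exact module category over $\mathcal{E}:=\A\boxtimes\C^{\op}$, acting by $(V\boxtimes W^{\op})\cdot Z=V\ot Z\ot W$; it then decomposes as a direct sum $\B=\bigoplus_i\M_i$ of indecomposable exact $\mathcal E$-module subcategories, each of which is a full subcategory of $\B$ closed under subquotients and stable under the $\mathcal E$-action, and $\A\C$ is the summand $\M_0$ containing $\mathbf{1}$. Using that $P_\A(\mathbf{1})\ot X\ot P_\C(\mathbf{1})=P_{\mathcal E}(\mathbf{1}\boxtimes\mathbf{1}^{\op})\cdot X$ is projective in $\B$ by exactness, one checks that for $X\in\mathcal{O}(\A\C)$ the projective cover $P_{\A\C}(X)$ computed inside $\A\C$ is the same object as $P_\B(X)$; hence $R_{\A\C}=\sum_{X\in\mathcal{O}(\A\C)}\FPdim(X)P_\B(X)$ is exactly the $\M_0$-isotypic part of $R_\B=\sum_{Z\in\mathcal{O}(\B)}\FPdim(Z)P_\B(Z)$, and $\FPdim(\A\C)=\FPdim(R_{\A\C})$. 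Splitting $R_\B=\sum_i R_{\M_i}$ along the $\M_i$ and using $X\ot R_\B=R_\B\ot X=\FPdim(X)R_\B$ for $X\in\B$ (together with $\mathcal E$-stability of the $\M_i$), I would check that each $R_{\M_i}$ satisfies $U\cdot R_{\M_i}=\FPdim(U)R_{\M_i}$ for all $U\in\mathcal E$; by Frobenius--Perron theory (as in \cite[Subsection 2.4]{EO}), such an eigenvector in ${\rm K_0}(\M_i)\otimes\mathbb R$ is unique up to a positive scalar.

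Next I would consider $v:=R_\A\ot R_\C=\sum_{X\in\mathcal{O}(\A),\,Y\in\mathcal{O}(\C)}\FPdim(X)\FPdim(Y)\,P_\A(X)\ot P_\C(Y)$, which equals $R_{\mathcal E}\cdot\mathbf{1}$. Although $R_\A$ and $R_\C$ need not individually lie in ${\rm K_0}(\B)\otimes\mathbb R$, exactness makes each summand $P_\A(X)\ot P_\C(Y)=P_{\mathcal E}(X\boxtimes Y^{\op})\cdot\mathbf{1}$ projective in $\B$, so $v\in{\rm K_0}(\B)\otimes\mathbb R$; moreover $v$ is supported on $\M_0$ (as $\mathbf{1}\in\M_0$ and $\M_0$ is $\mathcal E$-stable) and satisfies $U\cdot v=\FPdim(U)v$. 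By the uniqueness just noted, $v=c\,R_{\A\C}$ for some $c>0$.

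Applying the multiplicative Frobenius--Perron character $\FPdim:\gr(\B)\to\mathbb R$ (which restricts on $\A$ and on $\C$ to their Frobenius--Perron dimensions and is multiplicative on classes of projectives), the left side gives $\FPdim(v)=\FPdim(R_\A)\FPdim(R_\C)=\FPdim(\A)\FPdim(\C)$, while the right side gives $c\,\FPdim(\A\C)$. It remains to compute $c$, which I would do by comparing the coefficient of $P_\B(\mathbf{1})$ on both sides of $v=c\,R_{\A\C}$: it is $\FPdim(\mathbf{1})=1$ in $R_{\A\C}$, and in $v$ it equals $\sum_{X,Y}\FPdim(X)\FPdim(Y)\dim\Hom_\B(P_\A(X)\ot P_\C(Y),\mathbf{1})$. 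By adjunction $\Hom_\B(P_\A(X)\ot P_\C(Y),\mathbf{1})\cong\Hom_\B(P_\A(X),{}^*P_\C(Y))$; since ${}^*P_\C(Y)$ is again an indecomposable projective $P_\C(Y')$ with $(Y')^D\cong Y$, Lemma \ref{factrorsgen}(2) (this is where the hypothesis that $\D$ is fusion enters) gives $\dim\Hom_\B(P_\A(X),P_\C(Y'))=\delta_{X^*,(Y')^D}=\delta_{X^*,Y}$, which is nonzero precisely when $X\cong{}^*Y$ --- forcing $X\in\D$ --- in which case $Y\cong X^*$. Hence $c=\sum_{X\in\mathcal{O}(\D)}\FPdim(X)\FPdim(X^*)=\sum_{X\in\mathcal{O}(\D)}\FPdim(X)^2=\FPdim(\D)$, the last equality because $\D$ is fusion. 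Combining, $\FPdim(\A)\FPdim(\C)=\FPdim(\D)\FPdim(\A\C)$.

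The $\FPdim$-bookkeeping is routine; the points that need care are (i) the structural facts that $\A\C$ is a direct summand of $\B$ as an $\mathcal E$-module category and that $P_{\A\C}(X)=P_\B(X)$ --- so that the paper's $R_{\A\C}$ really is the $\M_0$-part of $R_\B$ and the Frobenius--Perron uniqueness is available --- and (ii) keeping track of which duals occur in the adjunction so that Lemma \ref{factrorsgen}(2) is invoked with the correct indices. I expect (i), together with the fact that $R_\A\ot R_\C$ only lands in ${\rm K_0}(\B)\otimes\mathbb R$ after one uses exactness, to be the main obstacle.
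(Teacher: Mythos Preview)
Your argument is essentially the paper's: both show that $R_\A\ot R_\C$ is a Frobenius--Perron eigenvector for the $\A\boxtimes\C^{\op}$-action on $\A\C$, hence proportional to $R_{\A\C}$, and then compute the scalar by pairing with $\mathbf{1}$ using Lemma~\ref{factrorsgen}(2). You spell out more of the structural background (the direct-summand decomposition of $\B$, and $P_{\A\C}(X)=P_\B(X)$) that the paper leaves implicit; one small slip is that the adjunction gives $\Hom_\B(P_\A(X),P_\C(Y)^*)$ rather than $\Hom_\B(P_\A(X),{}^*P_\C(Y))$, but after reindexing the sum over $Y$ this is harmless since $\FPdim$ is invariant under duals.
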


\begin{proof}
Since $\A\C$ is an indecomposable exact $\A\boxtimes \C^{{\rm op}}$-module category, it follows from the Frobenius-Perron theorem that there is a unique (up to scaling) positive eigenvector for the action of $\A\boxtimes \C^{{\rm op}}$ on $\A\C$ (see \cite[Proposition 3.4.4]{EGNO}). Thus, we have $R_{\A}R_{\C}=\lambda R_{\A\C}$ for some scalar $\lambda$, since both $R_{\A\C}$ and $R_{\A}R_{\C}$ are positive eigenvectors for the action of $\A\boxtimes \C^{{\rm op}}$ on $\A\C$. 

Let us use the inner product given by $\dim\Hom(X,Y)$ to find $\lambda$ (see \ref{Finite tensor categories}). 
On the one hand, we have
$$\dim\Hom(R_{\A\C},{\bf 1})
=\sum_{X\in \mathcal{O}(\A\C)}\FPdim(X)\dim\Hom(P_{\A\C}(X),{\bf 1})=1.$$
On the other hand, using $R_{\C}^*=R_{\C}$, we have
\begin{eqnarray*}
\lefteqn{\dim\Hom(R_{\A}R_{\C},{\bf 1})=
\dim\Hom(R_{\A},R_{\C})}\\
& = & \sum_{X\in \mathcal{O}(\A),\,Y\in \mathcal{O}(\C)}\FPdim(X)\FPdim(Y)\dim\Hom(P_{\A}(X),P_{\C}(Y)).
\end{eqnarray*}
Thus by Lemma \ref{factrorsgen}(2), we have
\begin{eqnarray*}
\lefteqn{\dim\Hom(R_{\A}R_{\C},{\bf 1})}\\
& = & \sum_{Y\in \mathcal{O}(\D)}\FPdim({}^*(Y^D))\FPdim(Y)\dim\Hom(P_{\A}({}^*(Y^D)),P_{\C}(Y))\\
& = & \sum_{Y\in \mathcal{O}(\D)}\FPdim(Y)^2=\FPdim(\D).
\end{eqnarray*} 
Hence, $R_{\A}R_{\C}=\FPdim(\D)R_{\A\C}$, and taking $\FPdim$ of both sides, we get the statement. 
\end{proof}

\begin{corollary}\label{ineq}
Assume $\B$ is exact over $\A\boxtimes \C^{{\rm op}}$, and $\D=\A\cap\C$ is fusion. Then the following hold:
\begin{enumerate}
\item
$$\FPdim(\B)\ge \frac{\FPdim(\A)\FPdim(\C)}{\FPdim(\D)}.$$
\item 
We have an equality 
$$\FPdim(\B)=\frac{\FPdim(\A)\FPdim(\C)}{\FPdim(\D)}$$
if and only if $\B=\A\C$.
\end{enumerate}  
\end{corollary}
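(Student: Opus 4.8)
The plan is to combine Proposition~\ref{fpdimtriv} with additivity of $\FPdim$ along the decomposition of $\B$ as a module category over $\A\boxtimes\C^{\rm op}$. Since Proposition~\ref{fpdimtriv} gives $\FPdim(\A\C)=\FPdim(\A)\FPdim(\C)/\FPdim(\D)$ (and $\FPdim(\D)>0$), both (1) and (2) reduce to proving that $\FPdim(\B)\ge\FPdim(\A\C)$ with equality if and only if $\B=\A\C$; the ``if'' direction of (2) then also follows at once from Proposition~\ref{fpdimtriv}.

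First I would decompose $\B$, as an exact module category over $\A\boxtimes\C^{\rm op}$, into a direct sum $\B=\bigoplus_i\M_i$ of indecomposable exact module subcategories (see \cite[Section 7.6]{EGNO}); each $\M_i$ is finite, and since ${\bf 1}$ is a simple object of $\B$ it lies in a unique summand, which is $\A\C$; say $\M_0=\A\C$. Next I would observe that for each $i$ and each $X\in\mathcal{O}(\M_i)$ one has $P_\B(X)=P_{\M_i}(X)$, the projective cover of $X$ computed inside $\M_i$: indeed $\B$ is the direct sum of the $\M_j$ as abelian categories, so an object of $\M_i$ is projective in $\B$ if and only if it is projective in $\M_i$, and $P_{\M_i}(X)$ is an indecomposable projective object of $\B$ with top $X$, hence equals $P_\B(X)$. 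Setting $R_{\M_i}:=\sum_{X\in\mathcal{O}(\M_i)}\FPdim(X)P_{\M_i}(X)$, we then get $R_\B=\sum_i R_{\M_i}$ in ${\rm K_0}(\B)\otimes_{\mathbb{Z}}\mathbb{R}$, and applying the character $\FPdim$ yields $\FPdim(\B)=\sum_i\FPdim(R_{\M_i})$ with $\FPdim(R_{\M_0})=\FPdim(\A\C)$. As $\FPdim(R_{\M_i})>0$ whenever $\M_i\neq 0$, this gives $\FPdim(\B)\ge\FPdim(\A\C)$, with equality exactly when $\M_i=0$ for all $i\neq 0$, i.e.\ exactly when $\B=\A\C$. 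Feeding in Proposition~\ref{fpdimtriv} gives (1) and (2).

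The step requiring the most care — though I do not expect a genuine obstacle — is the additivity $\FPdim(\B)=\sum_i\FPdim(R_{\M_i})$: it rests on the identification $P_\B(X)=P_{\M_i}(X)$ above (i.e.\ each indecomposable projective of $\B$ being concentrated in one module summand) together with the conventions $\FPdim(\B)=\FPdim(R_\B)$ and $\FPdim(\A\C)=\FPdim(R_{\A\C})$. One should also spell out that the indecomposable exact submodule category generated by ${\bf 1}$, which is how $\A\C$ was introduced, is indeed the direct summand $\M_0$ of $\B$ containing ${\bf 1}$; this follows from standard properties of exact module categories over finite tensor categories — for instance, by connectedness of $\M_0$ every simple object of $\M_0$ is a subquotient of some $X\ot Y$ with $X\in\A$, $Y\in\C$, so the submodule category generated by ${\bf 1}$ already contains all simples, hence all of $\M_0$.
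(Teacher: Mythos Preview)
Your proposal is correct and follows the same approach as the paper: reduce to showing $\FPdim(\B)\ge\FPdim(\A\C)$ with equality iff $\B=\A\C$, then invoke Proposition~\ref{fpdimtriv}. The paper's proof simply asserts this inequality in one line, while you spell out the justification via the direct-sum decomposition of $\B$ into indecomposable exact $\A\boxtimes\C^{\rm op}$-module summands and the resulting additivity of $\FPdim$; this is precisely the content behind the paper's terse claim.
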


\begin{proof}
Since $\FPdim(\B)\ge \FPdim(\A\C)$, and we have an equality if and only if $\B=\A\C$, the corollary follows from Proposition \ref{fpdimtriv}. 
\end{proof}

\begin{definition}\label{facdef1}
We say that $\B$ factorizes into a product of $\A$ and $\C$ if $\B=\A\C$. A factorization $\B=\A\C$ is called {\em exact}, and denoted by $\B=\A\bullet \C$, if $\A\cap\C=\Vect$.
\end{definition}

\begin{remark}\label{permute}
Note that $\B=\A\C$ if and only if $\B=\C\A$, and $\B=\A\bullet\C$ if and only if $\B=\C\bullet\A$. 
\end{remark}

We are now ready to state and prove the main result of this section, which extends \cite[Theorem 3.8]{G} to the finite case.

\begin{theorem}\label{exfac}
Let $\B$ be a finite tensor category over $k$, and let $\A,\C$ be tensor subcategories of $\B$. Consider $\B$ as a left module 
category over $\A\boxtimes \C^{{\rm op}}$ in the natural way. 
The following are equivalent:
\begin{enumerate}
\item
For every $Z\in\mathcal{O}(\B)$, there exist unique $X\in \mathcal{O}(\A)$, $Y\in \mathcal{O}(\C)$ such that $Z=X\ot Y$ and $P_{\B}(Z)=P_{\A}(X)\otimes P_{\C}(Y)$.
\item
$\B=\A\bullet \C$.
\item
$\B$ is exact over $\A\boxtimes \C^{{\rm op}}$ and $\FPdim(\B)=\FPdim(\A)\FPdim(\C)$.
\end{enumerate}
\end{theorem}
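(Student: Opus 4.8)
The plan is to prove the cycle of implications $(2)\Rightarrow(1)\Rightarrow(3)\Rightarrow(2)$. For $(2)\Rightarrow(1)$: assume $\B=\A\bullet\C$, so $\D=\Vect$ and $\B=\A\C$; in particular $\B$ is exact over $\A\boxtimes\C^{\op}$. Since $\D=\Vect$, Proposition~\ref{simple} shows that for all $X\in\mathcal O(\A)$, $Y\in\mathcal O(\C)$ the object $X\ot Y$ is simple, that it determines $X$ and $Y$, and that $P_{\B}(X\ot Y)=P_{\A}(X)\ot P_{\C}(Y)$. It remains to see that every simple object of $\B$ has this form. As $\A\C$ is the $\A\boxtimes\C^{\op}$-module subcategory of $\B$ generated by $\be$, every object of $\A\C$ is a subquotient of a finite direct sum of objects $(a\boxtimes c)\cdot\be=a\ot c$ with $a\in\A$, $c\in\C$; hence every simple object of $\A\C$ is a composition factor of some $a\ot c$, and therefore of some $X\ot Y$ with $X,Y$ simple. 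Since $X\ot Y$ is itself simple, $\mathcal O(\A\C)=\{X\ot Y:X\in\mathcal O(\A),\,Y\in\mathcal O(\C)\}$, and as $\B=\A\C$ this is precisely statement $(1)$.

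For $(1)\Rightarrow(3)$: the dimension identity follows by summing $\FPdim(X\ot Y)=\FPdim(X)\FPdim(Y)$ and $\FPdim(P_{\B}(X\ot Y))=\FPdim(P_{\A}(X))\FPdim(P_{\C}(Y))$ over $\mathcal O(\A)\times\mathcal O(\C)$, using $(1)$. For exactness it suffices to show that $(P_{\A}(X)\boxtimes P_{\C}(Y))\cdot b=P_{\A}(X)\ot b\ot P_{\C}(Y)$ is projective in $\B$ for every $b\in\B$, since every projective object of $\A\boxtimes\C^{\op}$ is a direct sum of objects $P_{\A}(X)\boxtimes P_{\C}(Y)$. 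When $b=X'\ot Y'$ is simple, rewrite the object as $(P_{\A}(X)\ot X')\ot(Y'\ot P_{\C}(Y))$: the first factor is projective in $\A$ and the second in $\C$, so after decomposing them into indecomposable projectives and applying $(1)$ we see the object is a direct sum of objects $P_{\B}(X''\ot Y'')$, hence projective. For general $b$, choose a filtration of $b$ with simple subquotients; tensoring yields a filtration of $P_{\A}(X)\ot b\ot P_{\C}(Y)$ with projective subquotients, and an extension of a projective by a projective splits.

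The heart of the matter is $(3)\Rightarrow(2)$. Put $\mathcal T:=\A\boxtimes\C^{\op}$, so by hypothesis $\B$ is exact over $\mathcal T$ and $\FPdim(\B)=\FPdim(\mathcal T)$. I would first establish $\B=\A\C$ and then deduce $\D=\Vect$. Decompose $\B=\bigoplus_{j}\M_j$ into indecomposable exact $\mathcal T$-module subcategories with $\M_0=\A\C$ the summand containing $\be$, and write $\M_j=\Mod(B_j)_{\mathcal T}$ for connected exact algebras $B_j$ in $\mathcal T$, so that $\FPdim(\M_j)=\FPdim(\mathcal T)/\FPdim(B_j)$ and the hypothesis becomes $\sum_j\FPdim(B_j)^{-1}=1$. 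The claim is that the index set is a single point and $B_0=\be$. Granting it, $\B\simeq\mathcal T_{\mathcal T}$ as a $\mathcal T$-module category via an equivalence $\Phi$ with $\Phi(\be_{\B})\cong\be_{\mathcal T}$; since $\mathcal T_{\mathcal T}$ is generated by $\be_{\mathcal T}$ this gives $\B=\A\C$. Moreover, by module-functoriality $\Phi(a)\cong(a\boxtimes\be)\ot_{\mathcal T}\be_{\mathcal T}=a\boxtimes\be$ for $a\in\A$, and $\Phi(c)\cong\be\boxtimes c$ for $c\in\C$, so $\Phi$ identifies $\A$ with the full subcategory of $\mathcal T$ on the objects $a\boxtimes\be$ and $\C$ with the full subcategory on the objects $\be\boxtimes c$. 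An object lying in both of these has all composition factors isomorphic to $\be_{\mathcal T}$ and, using the splitting $\Bxt^{1}_{\mathcal T}(\be,\be)=\Bxt^{1}_{\A}(\be,\be)\oplus\Bxt^{1}_{\C^{\op}}(\be,\be)$, is semisimple; hence $\Phi(\D)=\Vect$, so $\D\simeq\Vect$ as abelian categories and therefore $\D=\Vect$. Thus $\B=\A\bullet\C$.

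The main obstacle is exactly the claim, inside $(3)\Rightarrow(2)$, that there is only one summand and $B_0=\be$ — equivalently that $\B=\A\C$, equivalently that $\D=\Vect$. One has $\FPdim(B_0)=\FPdim(\D)$: this comes from (the proof of) Proposition~\ref{fpdimtriv}, which uses only that the largest $\D$-subquotients of $P_{\A}(X)$ and $P_{\C}(Y)$ are projective covers in $\D$, not the fusion hypothesis on $\D$. But the relation $\sum_j\FPdim(B_j)^{-1}=1$ admits solutions in which spurious extra summands $\M_j$ ($j\neq0$) compensate a nontrivial $\D$, so the claim cannot be settled by a Frobenius--Perron-dimension count alone. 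I expect the missing input to come from the tensor structure of $\B$ — for instance from the interaction of the decomposition $\B=\bigoplus_j\M_j$ with the duality of $\B$, which should single out the summand $\M_0$ containing $\be$ and force $\D=\Vect$. Everything else in the proof is routine once this point is settled.
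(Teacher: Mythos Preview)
Your $(2)\Rightarrow(1)$ and $(1)\Rightarrow(3)$ are correct and coincide with the paper's arguments: the paper proves $(1)\Leftrightarrow(2)$, and your exactness argument (writing a simple $Z=X_1\ot Y_1$ and decomposing $P_{\A}(X)\ot X_1$ and $Y_1\ot P_{\C}(Y)$ into indecomposable projectives) is verbatim the paper's $(1)\Rightarrow(2)$; your FPdim identity is the computation appearing in the paper's $(2)\Rightarrow(1)$.

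The gap you acknowledge in $(3)\Rightarrow(2)$ is real, and your instinct that the FPdim count cannot settle it is exactly right---in fact no argument will: $(3)\Rightarrow(2)$ is false as written. Take $G=(\mathbb{Z}/p\mathbb{Z})^2$ over a field of characteristic $\neq p$, $\B=\Vect_G$, and $\A=\C=\Vect_H$ for one fixed cyclic subgroup $H$ of order $p$. Then $\B$ is semisimple, hence exact over $\A\boxtimes\C^{\op}$, and $\FPdim(\B)=p^2=\FPdim(\A)\FPdim(\C)$, so $(3)$ holds; but $\A\cap\C=\Vect_H\neq\Vect$ and $\B\neq\A\C$, so $(2)$ fails. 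In your notation this is precisely the scenario you anticipated: there are $p$ summands $\M_j$ (the cosets of $H$), each with $\FPdim(B_j)=p=\FPdim(\D)$, and $\sum_j p^{-1}=1$. The paper disposes of $(2)\Leftrightarrow(3)$ in one line by citing Corollary~\ref{ineq}; this is fine for $(2)\Rightarrow(3)$ (since $(2)$ supplies $\D=\Vect$), but for $(3)\Rightarrow(2)$ the corollary's standing hypothesis that $\D$ be fusion is not available, so the paper's proof shares the same gap. The valid content is $(1)\Leftrightarrow(2)\Rightarrow(3)$; the converse needs an extra hypothesis such as $\A\cap\C=\Vect$ (compare the Question immediately following the theorem). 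A smaller point: your assertion that Proposition~\ref{fpdimtriv} goes through without the fusion hypothesis on $\D$ is not justified---its proof invokes Lemma~\ref{factrorsgen}(2), which genuinely requires $\D$ fusion (see the Remark after that lemma).
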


\begin{proof}
Assume (1) holds. Then existence implies that any $Z\in\mathcal{O}(\B)$ is equivalent to ${\bf 1}$ for the equivalence relation defined 
in \cite[Section 7.6]{EGNO}, so $\B$ is indecomposable over $\A\boxtimes \C^{{\rm op}}$. Also, uniqueness implies $\A\cap\C=\Vect$. 

It remains to show that $\B$ is exact over $\A\boxtimes \C^{{\rm op}}$. To this end, 
note first that any projective in $\A\boxtimes \C^{{\rm op}}$ is a direct sum of projectives 
of the form $P_{\A}(X) \boxtimes P_{\C}(Y)$, $X\in \mathcal{O}(\A)$, $Y\in \mathcal{O}(\C)$. Now since tensoring preserves exactness in the 
$\A\boxtimes \C^{{\rm op}}$-module category $\B$, and an extension of a projective object by a projective 
one is projective, 
it suffices to show that $P_{\A}(X)\otimes Z\ot P_{\C}(Y)$ is projective in $\B$ for every $X\in \mathcal{O}(\A)$, $Y\in \mathcal{O}(\C)$ and $Z\in \mathcal{O}(\B)$ (and then induct on the length of an object of $\B$). Finally, to show this, note that by assumption, there are simples $X_1\in \mathcal{O}(\A)$, $Y_1\in \mathcal{O}(\C)$ such that $Z = X_1 \otimes Y_1$, so we have 
$$P_{\A}(X)\otimes Z\ot P_{\C}(Y)= 
P_{\A}(X)\otimes X_1 \otimes Y_1\ot P_{\C}(Y)
.$$ Since $P_{\A}(X)\otimes X_1$, $Y_1\ot P_{\C}(Y)$ are projective in $\A,\C$ respectively, it follows from (\ref{projdec}) 
that $P_{\A}(X)\otimes Z\ot P_{\C}(Y)$ is a direct sum of terms of the form $P_{\A}(X’) \otimes P_{\C}(Y’)$, where $X'\in \mathcal{O}(\A)$, $Y'\in \mathcal{O}(\C)$, which are all projective in $\B$ by assumption. Hence, (2) holds.

Conversely, assume (2) holds. Then by Proposition \ref{simple}, we have
\begin{eqnarray*}
\lefteqn{\FPdim(\B)=\sum_{Z\in \mathcal{O}(\B)}\FPdim(Z)\FPdim(P_{\B}(Z))}\\
& \ge & \sum_{\substack{X\in \mathcal{O}(\A),\,Y\in \mathcal{O}(\C)}}\FPdim(X\otimes Y)\FPdim(P_{\B}(X\otimes Y))\\
& = & \sum_{\substack{X\in \mathcal{O}(\A),\,Y\in \mathcal{O}(\C)}} \FPdim(X\otimes Y)\FPdim(P_{\A}(X)\otimes P_{\C}(Y))\\
& = & \FPdim(\A)\FPdim(\C).
\end{eqnarray*}
Since by Corollary \ref{ineq} we have an equality, (1) holds. 
 
Finally, (2) and (3) are equivalent by Corollary \ref{ineq}.
\end{proof}

\begin{question}
Let $\A,\C\subset \B$ be finite tensor categories. Is it true that if $\A\cap \C=\Vect$ and $\FPdim(\B)=\FPdim(\A)\FPdim(\C)$, then $\B$ is exact over $\A\boxtimes \C^{\rm {{\rm op}}}$? 
\end{question}

\begin{corollary}\label{pointedexfac}
Let $\B=\A\bullet \C$ be an exact factorization of finite tensor categories. Then the following hold:
\begin{enumerate}
\item
$\A,\C$ are weakly-integral if and only if $\B$ is weakly-integral.
\item
$\A,\C$ are integral if and only if $\B$ is integral.
\item
$\A,\C$ are fusion if and only if $\B$ is fusion.
\item
$\A,\C$ are pointed if and only if $\B$ is pointed.
\item
$\A,\C$ have the Chevalley property if and only if $\B$ has the Chevalley property. Moreover, in this case we have an exact factorization $\B_{{\rm ss}}=\A_{{\rm ss}}\bullet \C_{{\rm ss}}$ of fusion categories. (See \ref{Finite tensor categories}.)
\item
If $\B$ is braided then $\B=\A\boxtimes \C$ is a Deligne tensor product, and 
the simple objects of $\A$, $\C$ projectively centralize each other in the sense of \cite[Section 8.22]{EGNO}. 
\end{enumerate}
\end{corollary}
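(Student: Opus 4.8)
The plan is to derive everything from Theorem~\ref{exfac} and Proposition~\ref{simple} (both of which apply, since $\B=\A\bullet\C$ forces $\D=\Vect$ and $\B$ exact over $\A\boxtimes\C^{{\rm op}}$): they give $\mathcal{O}(\B)=\{X\ot Y:X\in\mathcal{O}(\A),\,Y\in\mathcal{O}(\C)\}$ bijectively, $P_{\B}(X\ot Y)=P_{\A}(X)\ot P_{\C}(Y)$, and $\FPdim(\B)=\FPdim(\A)\FPdim(\C)$; since $\FPdim$ is a character of $\mathrm{Gr}(\B)$ we also have $\FPdim(X\ot Y)=\FPdim(X)\FPdim(Y)$. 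For (2): if $\A,\C$ are integral then each $\FPdim(X\ot Y)$ is an integer, so $\B$ is integral; conversely if $\B$ is integral, taking $Y=\mathbf 1$ shows $\FPdim(X)\in\mathbb Z$ for all $X\in\mathcal{O}(\A)$, and symmetrically for $\C$. For (1) the ``if'' direction is immediate from multiplicativity; for the converse one argues as in the fusion case \cite{G}: $\FPdim(\A)$ and $\FPdim(\C)$ are algebraic integers, each dividing $\FPdim(\B)$ (being the Frobenius--Perron dimension of a tensor subcategory) and all of whose Galois conjugates are bounded in absolute value by $\FPdim(\A)$, $\FPdim(\C)$ respectively; since their product is the rational integer $\FPdim(\B)$, every conjugate of $\FPdim(\A)$ has absolute value exactly $\FPdim(\A)$, and the arithmetic of Frobenius--Perron dimensions then forces $\FPdim(\A)\in\mathbb Q$, hence in $\mathbb Z$ (likewise for $\C$).

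For (3): a tensor subcategory of a semisimple tensor category is semisimple (it is full and closed under subobjects), so $\B$ fusion implies $\A,\C$ fusion; conversely, if $\A,\C$ are fusion then $P_{\A}(X)=X$ and $P_{\C}(Y)=Y$, so $P_{\B}(X\ot Y)=X\ot Y$ by Theorem~\ref{exfac}(1), and $\B$, having all simples projective, is fusion. For (4): $\B$ is pointed iff every simple $X\ot Y$ is invertible, i.e. iff $\FPdim(X)\FPdim(Y)=1$ (a simple object is invertible precisely when its Frobenius--Perron dimension is $1$); since $\FPdim\ge 1$ on nonzero objects, this holds for all $X,Y$ iff $\FPdim\equiv 1$ on $\mathcal{O}(\A)$ and on $\mathcal{O}(\C)$ (take $Y=\mathbf 1$, resp. $X=\mathbf 1$), i.e. iff $\A$ and $\C$ are pointed.

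Part (5) is the crux. If $\B$ is Chevalley and $S,T\in\mathcal{O}(\A)$, then $S,T\in\mathcal{O}(\B)$ by Proposition~\ref{simple}, so $S\ot T$ is $\B$-semisimple; its $\B$-simple summands are subobjects of $S\ot T\in\A$, hence lie in $\A$ and are $\A$-simple, so $S\ot T$ is $\A$-semisimple, and $\A$ (and likewise $\C$) is Chevalley. For the converse, the decisive point is that $\B=\C\bullet\A$ as well (Remark~\ref{permute}), so Proposition~\ref{simple} applied to this factorization shows $Y\ot X\in\mathcal{O}(\B)$ for all $X\in\mathcal{O}(\A)$, $Y\in\mathcal{O}(\C)$. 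Then for $Z_i=X_i\ot Y_i\in\mathcal{O}(\B)$ ($i=1,2$), writing the simple object $Y_1\ot X_2$ as $X'\ot Y'$ gives $Z_1\ot Z_2\cong(X_1\ot X')\ot(Y'\ot Y_2)$, in which $X_1\ot X'$ and $Y'\ot Y_2$ are semisimple in $\A$, $\C$ by hypothesis; hence $Z_1\ot Z_2$ is a direct sum of objects $X''\ot Y''$, so $\B$-semisimple, and $\B$ is Chevalley. Finally $\B_{\rm ss}$, the fusion subcategory generated by $\mathcal{O}(\B)=\{X\ot Y\}$, coincides with the one generated by $\A_{\rm ss}$ and $\C_{\rm ss}$; these are fusion subcategories of $\B_{\rm ss}$ with $\A_{\rm ss}\cap\C_{\rm ss}\subseteq\A\cap\C=\Vect$, every module category over a fusion category is exact, and $\FPdim(\B_{\rm ss})=\sum_{X,Y}\FPdim(X)^2\FPdim(Y)^2=\FPdim(\A_{\rm ss})\FPdim(\C_{\rm ss})$, so Theorem~\ref{exfac} yields $\B_{\rm ss}=\A_{\rm ss}\bullet\C_{\rm ss}$.

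For (6): since $\B$ is braided, the tensor-product functor $F\colon\A\boxtimes\C\to\B$, $a\boxtimes c\mapsto a\ot c$, is a tensor functor, with monoidal constraint assembled from the braiding of $\B$ (swapping the inner two factors). Its image is the tensor subcategory of $\B$ generated by $\A$ and $\C$, which contains $\mathbf 1$ and is stable under the $\A\boxtimes\C^{{\rm op}}$-action on $\B$, hence contains the module subcategory $\A\C=\B$ (Corollary~\ref{ineq}(2), using $\D=\Vect$); thus $F$ is surjective, and since $\FPdim(\A\boxtimes\C)=\FPdim(\A)\FPdim(\C)=\FPdim(\B)$ it is an equivalence, i.e. $\B\cong\A\boxtimes\C$. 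The last assertion is then immediate: for $X\in\mathcal{O}(\A)$, $Y\in\mathcal{O}(\C)$ the object $X\ot Y$ is simple (Proposition~\ref{simple}), so $c_{Y,X}\circ c_{X,Y}\in\Hom_{\B}(X\ot Y,X\ot Y)=k$ is a scalar automorphism, which says exactly that the simple objects of $\A$ and $\C$ projectively centralize one another. I expect the main obstacle to be the converse direction of (5): semisimplicity of $Y\ot X$ is precisely the obstruction to $\B$ being Chevalley and is extracted only by exploiting the symmetry $\B=\C\bullet\A$; secondary subtleties are the weakly-integral converse in (1), which needs the arithmetic of Frobenius--Perron dimensions of tensor subcategories rather than bare multiplicativity, and in (6) the fact that a surjective tensor functor between finite tensor categories of equal Frobenius--Perron dimension is an equivalence.
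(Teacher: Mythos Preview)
Your proof is correct and follows essentially the same route as the paper for parts (2)--(6): both derive everything from Theorem~\ref{exfac} (together with Remark~\ref{permute} for the converse of (5)), and your treatment of (5) is verbatim the paper's argument. You give more detail than the paper in several places---the paper dismisses the ``if'' direction of (5) as clear, does not spell out $\B_{\rm ss}=\A_{\rm ss}\bullet\C_{\rm ss}$, and in (6) simply asserts that $F$ is an equivalence (this is immediate from Theorem~\ref{exfac}(1), since $F$ matches simples and projective covers bijectively); your surjectivity-plus-$\FPdim$ argument for (6) is a legitimate alternative.

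The one place to tighten is the converse of (1). The paper does not argue directly but cites \cite[Corollary~2.2]{GS}. Your sketch is in the right spirit, but the step ``all Galois conjugates of $\FPdim(\A)$ are bounded by $\FPdim(\A)$'' requires justification: it holds because $\FPdim(\A)=\sum_X\FPdim(X)\FPdim(P_{\A}(X))$ is a nonnegative-integer combination of products of Perron--Frobenius eigenvalues of integer matrices, and each such eigenvalue dominates its conjugates; then $|\sigma(\FPdim\A)|\,|\sigma(\FPdim\C)|=\FPdim(\B)$ forces equality in both bounds. Even so, passing from ``all conjugates have the same absolute value'' to ``$\FPdim(\A)\in\mathbb{Z}$'' is not automatic (conjugates need not be real), and this is exactly the arithmetic content supplied by the cited result. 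So either invoke \cite[Corollary~2.2]{GS} as the paper does, or supply the missing step explicitly.
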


\begin{proof}
(1) Follows immediately from Theorem \ref{exfac} and \cite[Corollary 2.2]{GS}.

(2)-(4) Follow immediately from Theorem \ref{exfac}. 

(5) Let us prove the ``only if" part, the ``if" part being clear. 
Let $Z,Z'\in \mathcal{O}(\B)$. By Theorem \ref{exfac}, we have $Z=X\ot Y$ and $Z'=X'\ot Y'$ for unique $X,X'\in \mathcal{O}(\A)$ and $Y,Y'\in \mathcal{O}(\C)$. Now by Theorem \ref{exfac} again, $Y\ot X'$ is simple in $\B$, so $Y\ot X'=X''\ot Y''$ for unique simples $X''\in \mathcal{O}(\A)$ and $Y''\in \mathcal{O}(\C)$. Thus, we have
$$Z\ot Z'=(X\ot Y)\ot (X'\ot Y')=(X\ot X'')\ot (Y''\ot Y').$$
Since $\A,\C$ have the Chevalley property, $X\ot X''\in \A$ and $Y''\ot Y'\in\C$ are semisimple, hence $Z\ot Z'$ is semisimple by  Theorem \ref{exfac}.

(6) The proof is the same as the proof of \cite[Corollary 3.9]{G}, and we include it for the reader convenience.

The equivalence $F:\A\boxtimes \C\xrightarrow{\cong} \B$, $X\boxtimes Y\mapsto X\otimes Y$, has a tensor structure $J$ defined by the braiding structure ${\rm c}$ on $\B$. Namely, for every $X,X'\in \A$ and $Y,Y'\in \C$, $$J_{X\boxtimes Y,X'\boxtimes Y'}:F((X\boxtimes Y)\otimes (X'\boxtimes Y'))\xrightarrow{\cong} F(X\boxtimes Y)\otimes F(X'\boxtimes Y')$$ is the map
$$
\id\otimes {\rm c}_{X',Y}\otimes \id:(X\otimes X')\otimes (Y\otimes Y')\xrightarrow{\cong} (X\otimes Y)\otimes 
(X'\otimes Y').$$
It is straightforward to verify that since ${\rm c}$ satisfies the braiding axioms, $J$ satisfies the tensor functor axioms.

Furthermore, for every simple objects $X\in \mathcal{O}(\A)$ and $Y\in \mathcal{O}(\C)$, $${\rm c}_{Y,X}\circ {\rm c}_{X,Y}:X\otimes Y\xrightarrow{\cong}X\otimes Y$$ is an automorphism of the simple object $X\otimes Y$ in $\B$ (see Proposition \ref{simple}). Thus, ${\rm c}_{Y,X}\circ {\rm c}_{X,Y}=\lambda\cdot \id_{X\otimes Y}$ for some $\lambda\in k^{\times}$, as claimed.
\end{proof}

\section{Exact factorizations of quasi-Hopf algebras}\label{efqhas}

Recall that a finite dimensional Hopf algebra $B$ admits an exact factorization $B=AC$ as a product of its two Hopf subalgebras $A$ and $C$, if the multiplication map $A\ot C\to B$ is a (coalgebra) isomorphism, or equivalently, if the comultiplication map 
$$B^*\xrightarrow{\Delta} B^*\ot B^*\twoheadrightarrow A^*\ot C^*$$ is an (algebra) isomorphism \cite[p.316]{Maj}. By \cite[Theorem 7.2.3]{Maj}, $B=AC$ if and only if $B=A\bowtie C$ is a {\em double cross product} Hopf algebra (i.e., $A,C$ are a matched pair), if and only if, $B^*=A^*\bullet C^*$ is a {\em double cross coproduct} Hopf algebra of the quotient Hopf algebras $A^*,C^*$ of $B^*$ \cite[Exercise 7.2.15]{Maj}.

\begin{example}\label{exfacgrsch}
Let $H$ be any finite dimensional Hopf algebra, and let $D(H)$ be its Drinfeld double. Then we have $D(H)=H^{*{\rm cop}}\bowtie H$ and $D(H)^*=H^{{\rm op}}\bullet H^*$.
\end{example}

Let $(B,\Phi)$ be a finite dimensional {\em quasi}-Hopf algebra (see, e.g., \cite[Section 5.13]{EGNO}), and assume that
$$\pi_A:(B,\Phi)\twoheadrightarrow (A,\Phi_A),\,\,\,\,\,\,\pi_C:(B,\Phi)\twoheadrightarrow (C,\Phi_C)$$ are two surjective quasi-Hopf algebra maps.

\begin{definition}\label{exfacqhas}
We say that $(B,\Phi)$ admits an exact factorization $(B,\Phi)=(A,\Phi_A)\bullet(C,\Phi_C)$ if the composition map $$B\xrightarrow{\Delta} B\ot B\xrightarrow{\pi_A\ot \pi_C} A\ot C$$ is an (algebra) isomorphism.
\end{definition}

\begin{remark}
We have $(B,\Phi)=(A,\Phi_A)\bullet(C,\Phi_C)$ if and only if the composition map $A^*\ot C^*\xrightarrow{\Delta^*\circ(\pi_A^*\ot \pi_C^*)} B^*$ is a (coalgebra) isomorphism (because of finite dimensionality).
\end{remark}

\begin{theorem}\label{exfachas}
Let $(A,\Phi_A)$, $(C,\Phi_C)$ be two finite dimensional quasi-Hopf algebras, and set $\A:={\rm Rep}(A,\Phi_A)$ and $\C:={\rm Rep}(C,\Phi_C)$. Then exact factorizations $\B=\A\bullet \C$ are classified by quasi-Hopf algebras $(B,\Phi)$, with exact factorization 
$(B,\Phi)=(A,\Phi_A)\bullet (C,\Phi_C)$.
\end{theorem}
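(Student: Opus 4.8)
The plan is to translate between the module-categorical notion of exact factorization (Theorem~\ref{exfac}) and the algebraic notion (Definition~\ref{exfacqhas}) via Tannakian reconstruction. First I would set up the dictionary: surjective quasi-Hopf maps $\pi_A:(B,\Phi)\twoheadrightarrow(A,\Phi_A)$ and $\pi_C:(B,\Phi)\twoheadrightarrow(C,\Phi_C)$ correspond, under $\Rep(-)$, to injective tensor functors $\A=\Rep(A,\Phi_A)\hookrightarrow\B=\Rep(B,\Phi)$ and $\C\hookrightarrow\B$ (pullback of modules along $\pi_A$, $\pi_C$); this is standard for (quasi-)Hopf algebras since $\pi_A$ surjective means every $A$-module is a $B$-submodule-closed subcategory, giving a tensor subcategory. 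Conversely, given an exact factorization $\B=\A\bullet\C$ of finite tensor categories, I want to produce a quasi-Hopf algebra $(B,\Phi)$. Here I would invoke that $\B$ is a finite tensor category, hence $\B\simeq\Rep(B,\Phi)$ for some finite dimensional quasi-Hopf algebra $(B,\Phi)$ by the reconstruction theorem (\cite[Section 5.13, Theorem 5.13.5 or similar]{EGNO}); the two tensor subcategories $\A,\C$ then correspond, by the same reconstruction applied to the subcategories, to surjective quasi-Hopf quotients $\pi_A,\pi_C$.

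The heart of the argument is to show that the categorical condition of Theorem~\ref{exfac} — namely $\B$ exact over $\A\boxtimes\C^{\rm op}$ with $\FPdim(\B)=\FPdim(\A)\FPdim(\C)$, equivalently every simple of $\B$ is uniquely $X\otimes Y$ with $P_\B(X\otimes Y)=P_\A(X)\otimes P_\C(Y)$ — is equivalent to the algebraic condition that $\Delta$ followed by $\pi_A\otimes\pi_C$ is an algebra isomorphism $B\xrightarrow{\sim}A\otimes C$. I would argue this by a dimension-count plus injectivity/surjectivity. On the one hand, the regular representation $B$ of $B$ is the (quasi-)regular object; under $\Rep(A,\Phi_A)$, $A$ is $\bigoplus_{X\in\mathcal O(\A)}\dim(X)\,P_\A(X)$ as an object (with $\dim$ the vector-space dimension, which plays the role Frobenius--Perron does in the weakly-integral normalization — but for general quasi-Hopf algebras I should phrase everything in terms of vector-space dimensions and composition multiplicities, not $\FPdim$). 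The key computation: the object $A\otimes C$ of $\B$ (tensor product of the images of the regular objects) decomposes, using $P_\B(X\otimes Y)=P_\A(X)\otimes P_\C(Y)$ from Proposition~\ref{simple}, as $\bigoplus_{X,Y}\dim(X)\dim(Y)\,P_\A(X)\otimes P_\C(Y)=\bigoplus_{Z\in\mathcal O(\B)}\dim(Z)P_\B(Z)$, which is exactly the regular object $B$ of $\B$. So the map $B\to A\otimes C$ is a morphism between two copies of the regular object; to see it is an isomorphism it suffices to see it is injective (or surjective), and nonvanishing on the unit/counit component, which follows from it being $\Delta$ composed with two surjections and $\Delta$ being a coalgebra splitting. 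Conversely, if $B\cong A\otimes C$ as algebras via this map, then $\B\simeq\Mod(B)$ has $\Mod(A\otimes C)=\Mod(A)\boxtimes\Mod(C)=\A\boxtimes\C$ as module category over $\A\boxtimes\C^{\rm op}$, so $\B$ is exact over $\A\boxtimes\C^{\rm op}$ and the dimension count gives the $\FPdim$ equality, hence $\B=\A\bullet\C$ by Theorem~\ref{exfac}.

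I would organize the proof as: (i) recall reconstruction so that the statement is a correspondence between subcategory data and quotient data; (ii) show $\pi_A,\pi_C$ surjective $\iff$ $\A,\C$ are tensor subcategories of $\B$ (routine, via pullback and the fact that a sub-bialgebra/quotient corresponds to a full tensor subcategory closed under subquotients); (iii) identify the algebra map $B\to A\otimes C$ with a morphism of regular objects in $\B$ as a left $\A\boxtimes\C^{\rm op}$-module category, and prove bijectivity of one is equivalent to the categorical exact-factorization conditions, citing Proposition~\ref{simple} and Theorem~\ref{exfac}; (iv) conclude the classification by noting equivalences $\B\simeq\B'$ of tensor categories compatible with the two subcategory inclusions correspond to isomorphisms $(B,\Phi)\cong(B',\Phi')$ of quasi-Hopf algebras compatible with the two quotient maps.

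The main obstacle I anticipate is bookkeeping in two places. First, quasi-Hopf algebras are only associative as algebras but the coalgebra structure is non-coassociative (only up to $\Phi$), so ``$\Delta$ followed by $\pi_A\otimes\pi_C$'' is a well-defined algebra map (since $\Delta$ is an algebra map and $\Phi$ does not obstruct that) but one must be careful that the target $A\otimes C$ has the honest tensor-product algebra structure and that the categorical tensor product $X\otimes Y$ in $\B$ corresponds on the algebra side to restricting along exactly this map; tracking the associators $\Phi,\Phi_A,\Phi_C$ through the equivalence $\Rep(B,\Phi)\simeq\B$ is the delicate point. Second, since we are not assuming weak integrality, I must consistently replace $\FPdim$ by vector-space dimension in the reconstruction side and check that Proposition~\ref{fpdimtriv}/Corollary~\ref{ineq} can equally be run with $\dim\Hom$-based multiplicities when $\D=\Vect$ (which they can, since in that case $P_\B(X\otimes Y)=P_\A(X)\otimes P_\C(Y)$ gives the regular-object identity on the nose, independent of any dimension normalization). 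Once these are handled, the equivalence is essentially the regular-object computation above.
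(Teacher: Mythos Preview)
Your overall shape---reconstruction plus the regular-object computation---matches the paper, but there is one genuine gap and one soft spot.

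\textbf{The gap: reconstruction requires integrality.} You assert that ``$\B$ is a finite tensor category, hence $\B\simeq\Rep(B,\Phi)$ for some finite dimensional quasi-Hopf algebra'' by a general reconstruction theorem. This is false: reconstruction needs a (quasi-)fiber functor, which an arbitrary finite tensor category need not possess. The paper closes this gap by noting that $\A=\Rep(A,\Phi_A)$ and $\C=\Rep(C,\Phi_C)$ are automatically \emph{integral} (Frobenius--Perron dimension equals vector-space dimension), then invoking Corollary~\ref{pointedexfac}(2) to conclude that $\B=\A\bullet\C$ is integral, and finally citing \cite[Proposition 2.6]{EO}, which says that an integral finite tensor category admits a quasi-fiber functor and is therefore $\Rep(B,\Phi)$ for some quasi-Hopf algebra. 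Your later worry ``since we are not assuming weak integrality'' is thus misplaced: integrality is forced by the hypotheses and is exactly the missing bridge to reconstruction.

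\textbf{The soft spot: bijectivity of $f=(\pi_A\otimes\pi_C)\circ\Delta$.} Your sketch ``$\Delta$ being a coalgebra splitting'' shows $\Delta$ is injective, but $\pi_A\otimes\pi_C$ is only surjective, so injectivity of the composite does not follow from that alone. The paper's argument is cleaner: the regular-object identity $A\otimes C\cong\bigoplus_{Z}\dim(Z)\,P_{\B}(Z)\cong B$ (as $B$-modules) which you correctly derived shows that $A\otimes C$ is a \emph{faithful} $B$-module; since the $B$-action on $A\otimes C$ is left multiplication by $f(-)$, any $b\in\ker f$ acts as zero, hence $b=0$. With $\dim B=\dim A\cdot\dim C$ (now available because everything is integral), injectivity gives bijectivity. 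Once you insert the integrality step and this faithfulness argument, your proof coincides with the paper's.
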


\begin{proof}
Assume that $(B,\Phi)=(A,\Phi_A)\bullet (C,\Phi_C)$. Then by definition, $B\cong A\ot C$ as algebras. Let $\B:={\rm Rep}(B,\Phi)$. Then $\A,\C\subset \B$ are tensor subcategories via $\pi_A$, $\pi_C$. Now a finite dimensional module $Z$ over $A\ot C$ is simple if and only if $Z=X\ot Y$ for unique finite dimensional simple $A$-module $X$ and $C $-module $Y$ (see, e.g, \cite{E}). Moreover, $P_{\A}(X)\otimes P_{\C}(Y)$ is indecomposable projective over $A\ot C$, and it has $X\ot Y$ as a simple quotient. Thus, $P_{\A}(X)\otimes P_{\C}(Y)$ is the projective cover of $X\ot Y$. This implies that 
$\A,\C\subset \B$ satisfy the conditions in Theorem \ref{exfac}(1), which proves that $\B=\A\bullet \C$, as required. 

Conversely, assume that $\B={\rm Rep}(A,\Phi_A)\bullet {\rm Rep}(C,\Phi_C)$. Then by Corollary \ref{pointedexfac}(2), $\B$ is integral. Hence by \cite[Proposition 2.6]{EO}, there exists a unique (up to twisting) quasi-Hopf algebra $(B,\Phi)$ such that $\B\cong {\rm Rep}(B,\Phi)$ as tensor categories. Since ${\rm Rep}(A,\Phi_A)$, ${\rm Rep}(C,\Phi_C)$ are tensor subcategories of $\B$, we have two surjective quasi-Hopf algebra homomorphisms 
$$\pi_A:(B,\Phi)\twoheadrightarrow (A,\Phi_A),\,\,\pi_C:(B,\Phi)\twoheadrightarrow (C,\Phi_C)$$
(using a standard argument as in, e.g., \cite[Proposition 2.2]{BoN}). 
Consider the representation of $B$ on $A\ot C$, induced by $(\pi_A\ot \pi_C)\circ\Delta$.
Since this representation is isomorphic to $$\left(\bigoplus_{X\in \mathcal{O}(A)}{\rm dim}(X)P(X)\right)\ot \left(\bigoplus_{Y\in \mathcal{O}(C)}{\rm dim}(Y)P(Y)\right),$$
it follows from Theorem \ref{exfac} that it is isomorphic to the regular representation $B$.
This implies that $(\pi_A\ot \pi_C)\circ\Delta:B\to A\ot C$ is bijective, as required.
\end{proof}

\begin{example}\label{classexfacrdb}
Let $B=A\bowtie C$ be any finite dimensional double cross product Hopf algebra. Then we have ${\rm Rep}(B^*)={\rm Rep}(A^*)\bullet {\rm Rep}(C^*)$. In particular, if $H$ is any finite dimensional Hopf algebra, then by Theorem \ref{exfachas}, we have ${\rm Rep}(D(H)^*)={\rm Rep}(H^{{\rm op}})\bullet {\rm Rep}(H^*)$ (see Example \ref{exfacgrsch}).
\end{example}

Let $G$ be a finite group scheme (see \ref{gscth}), and let $G_1,G_2\subset G$ be two closed subgroup schemes. 
Let us say that $G$ has an exact factorization $G=G_1G_2$ if $\mathscr{O}(G)=\mathscr{O}(G_1)\bullet\mathscr{O}(G_2)$.

\begin{corollary}\label{classexfacgrsch}
Let $G_1,G_2$ be two finite group schemes over $k$, and let $\omega_1\in H^3(G_1,\mathbb{G}_m)$, $\omega_2\in H^3(G_2,\mathbb{G}_m)$. Then exact factorizations $\B={\rm Coh}(G_1,\omega_1)\bullet {\rm Coh}(G_2,\omega_2)$   
are classified by pairs $(G,\omega)$, where $G$ is a finite group scheme with exact 
factorization $G=G_1G_2$, and $\omega\in H^3(G,\mathbb{G}_m)$ such that $\omega_1=\omega_{|G_1}$, $\omega_2=\omega_{|G_2}$. 
\end{corollary}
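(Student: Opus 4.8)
The plan is to deduce Corollary \ref{classexfacgrsch} from Theorem \ref{exfachas} applied to the (commutative) quasi-Hopf algebras $A = \mathscr{O}(G_1)$ with associator $\omega_1$ and $C = \mathscr{O}(G_2)$ with associator $\omega_2$, noting that ${\rm Coh}(G_i,\omega_i) = {\rm Rep}(\mathscr{O}(G_i),\omega_i)$ by \ref{gscth}. By Theorem \ref{exfachas}, exact factorizations $\B = {\rm Coh}(G_1,\omega_1)\bullet {\rm Coh}(G_2,\omega_2)$ are classified by quasi-Hopf algebras $(B,\Phi)$ with surjections $\pi_i:(B,\Phi)\twoheadrightarrow (\mathscr{O}(G_i),\omega_i)$ such that $(\pi_1\ot\pi_2)\circ\Delta: B\xrightarrow{\cong}\mathscr{O}(G_1)\ot\mathscr{O}(G_2)$ is an algebra isomorphism. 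So the content of the corollary is to translate this datum into the datum of a finite group scheme $G$ with exact factorization $G=G_1G_2$ together with a $3$-cocycle $\omega$ restricting to $\omega_1$ and $\omega_2$.

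First I would observe that the algebra isomorphism $B\cong \mathscr{O}(G_1)\ot\mathscr{O}(G_2)$ forces $B$ to be commutative, so $B = \mathscr{O}(G)$ for a finite group scheme $G$; indeed $\Spec(B) = G_1\times G_2$ as a scheme, and the quasi-Hopf (here, honestly Hopf, since the associator is a normalized $3$-cocycle, i.e. a Drinfeld associator on the commutative Hopf algebra $B$ — so $\Phi =: \omega\in H^3(G,\mathbb{G}_m)$) comultiplication on $B = \mathscr{O}(G)$ encodes a group scheme structure on $G$. The surjections $\pi_i:\mathscr{O}(G)\twoheadrightarrow \mathscr{O}(G_i)$ dualize to closed embeddings of group schemes $G_i\hookrightarrow G$, and the requirement that $\pi_i$ be a map of quasi-Hopf algebras says precisely $\omega_{|G_i} = \omega_i$. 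Finally, that $(\pi_1\ot\pi_2)\circ\Delta$ is an isomorphism is, by the very definition given just before the corollary, the statement $\mathscr{O}(G) = \mathscr{O}(G_1)\bullet\mathscr{O}(G_2)$, i.e. $G = G_1G_2$ is an exact factorization. Conversely, given $(G,\omega)$ with $G = G_1G_2$ and $\omega_{|G_i} = \omega_i$, setting $(B,\Phi) := (\mathscr{O}(G),\omega)$ with $\pi_i$ the restriction maps recovers a datum as in Theorem \ref{exfachas}, and equivalences of such data on the two sides match up.

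The only genuine point requiring care — and the step I expect to be the main obstacle — is checking that the two classification results are matched up compatibly with the equivalence relations on both sides: a twist (gauge transformation) of the quasi-Hopf algebra $(B,\Phi)$ by a gauge element $J\in B\ot B$ must correspond, on commutative $B = \mathscr{O}(G)$, exactly to modifying $\omega$ within its cohomology class in $H^3(G,\mathbb{G}_m)$ while leaving the group scheme $G$ (i.e. the coalgebra structure is untouched and the algebra structure is intrinsic) and the subgroup schemes $G_1, G_2$ unchanged. This is because on a commutative Hopf algebra a gauge transformation is automatically a convolution-invertible normalized $2$-cochain, and twisting the associator by it changes $\omega$ by the coboundary $d J$; compatibility with $\pi_1,\pi_2$ then forces $J_{|G_i}$ to be a $2$-cochain on $G_i$, so the pair $(\omega_1,\omega_2)$ of restrictions is unchanged at the level of what is being classified (which in Theorem \ref{exfachas} was fixed $A,C$, hence fixed $(\mathscr{O}(G_i),\omega_i)$ on the nose, matching the ``$\omega_{|G_i} = \omega_i$ on the nose'' in the corollary after a harmless normalization). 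I would also remark, as the corollary does implicitly via the group-scheme version of exact factorization defined above, that no Frobenius–Perron dimension bookkeeping is needed beyond what Theorem \ref{exfachas} already supplies. With these identifications in place the corollary follows, so I would keep the written proof short: invoke Theorem \ref{exfachas}, identify $B$ with $\mathscr{O}(G)$, translate surjections into closed subgroup schemes and quasi-Hopf compatibility into restriction of cocycles, and note that $(\pi_1\ot\pi_2)\circ\Delta$ being an isomorphism is the definition of $G = G_1G_2$.
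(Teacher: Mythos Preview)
Your proposal is correct and follows essentially the same approach as the paper: invoke Theorem \ref{exfachas} with $A=\mathscr{O}(G_1)$, $C=\mathscr{O}(G_2)$, observe that the resulting $B$ is commutative (hence $B=\mathscr{O}(G)$ for some finite group scheme $G$, with $\Phi$ becoming a class $\omega\in H^3(G,\mathbb{G}_m)$), and identify the surjections and the isomorphism $(\pi_1\ot\pi_2)\circ\Delta$ with closed embeddings $G_i\hookrightarrow G$ and the exact factorization $G=G_1G_2$ respectively. Your discussion of matching twist equivalence with passage to cohomology classes is more explicit than the paper's, but the paper simply absorbs this into the phrase ``$\Phi$ corresponds to a class $\omega\in H^3(G,\mathbb{G}_m)$'' without further comment.
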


\begin{proof}
Assume $G=G_1G_2$, and $\omega\in H^3(G,\mathbb{G}_m)$. Let $\omega_1:=\omega_{\mid G_1}$ and $\omega_2:=\omega_{\mid G_2}$. Then we have $(\mathscr{O}(G),\omega)=(\mathscr{O}(G_1),\omega_1)\bullet (\mathscr{O}(G_2),\omega_2)$ by Theorem \ref{exfachas}, so ${\rm Coh}(G,\omega)={\rm Coh}(G_1,\omega_1)\bullet {\rm Coh}(G_2,\omega_2)$.

Conversely, assume $\B={\rm Coh}(G_1,\omega_1)\bullet {\rm Coh}(G_2,\omega_2)$. Let $A:=\mathscr{O}(G_1)$, $C:=\mathscr{O}(G_2)$. Then by Theorem \ref{exfachas}, there exists a quasi-Hopf algebra $(B,\Phi)$ such that $(\pi_A\ot \pi_C)\circ\Delta:B\xrightarrow{\cong} A\ot C$ is an algebra isomorphism, and $\B\cong {\rm Rep}(B,\Phi)$ as tensor categories. In particular, $B$ is a commutative Hopf algebra. Hence,  $B=\mathscr{O}(G)$ for some finite group scheme $G$, and $\Phi$ corresponds to a class $\omega\in H^3(G,\mathbb{G}_m)$ such that $\pi_A(\omega)=\omega_1$, $\pi_C(\omega)=\omega_2$. This proves the statement.
\end{proof}

Assume $k$ has characteristic $p>0$. Let $\g$ be a finite dimensional restricted Lie algebra over $k$, and let $\g_1,\g_2\subset\g$ be restricted Lie subalgebras. Let $G$, $G_1$ and $G_2$ be the finite group schemes with $\O(G)=u(\g)^*$, $\O(G_1)=u(\g_1)^*$ and $\O(G_2)=u(\g_2)^*$. (See, e.g., \cite[Section 2.2]{G2}.)
Let us say that $\g$ has an exact factorization $\g=\g_1\g_2$ if $\mathscr{O}(G)=\mathscr{O}(G_1)\bullet\mathscr{O}(G_2)$.

\begin{corollary}\label{classexfacrla}
Let $\g_1,\g_2$ be two finite dimensional restricted Lie algebras over $k$. Then exact factorizations $\B={\rm Rep}(u(\g_1)^*)\bullet {\rm Rep}(u(\g_2)^*)$ are classified by pairs $(\g,\omega)$, where $\g$ is a restricted Lie algebra with exact 
factorization $\g=\g_1\g_2$, and $\omega\in H^3(G,\mathbb{G}_a)=H^3(u(\g),k)$ is trivial on $G_1,G_2$ (equivalently, on $u(\g_1),u(\g_2)$).
\end{corollary}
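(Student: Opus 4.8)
The plan is to deduce Corollary \ref{classexfacrla} from Corollary \ref{classexfacgrsch} by specializing to the group schemes associated to restricted enveloping algebras. First I would recall the standard equivalence (see \cite[Section 2.2]{G2}): the assignment $\g\mapsto G$ with $\O(G)=u(\g)^*$ is an equivalence between the category of finite dimensional restricted Lie algebras over $k$ and the category of finite infinitesimal group schemes of height $\le 1$ over $k$, and it takes restricted Lie subalgebras to closed subgroup schemes. Thus $\g_1,\g_2\subset\g$ correspond to $G_1,G_2\subset G$, and conversely any infinitesimal height $\le 1$ group scheme $G$ with closed subgroup schemes $G_1,G_2$ arises this way. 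Moreover ${\rm Rep}(u(\g)^*)={\rm Coh}(G,1)$, since $u(\g)^*=\O(G)$, and with trivial associator this is the category of comodules over $\O(G)$, i.e.\ ${\rm Rep}(u(\g)^*)$. (Here one uses that $\omega=1$ is the relevant case; nontrivial $\omega$ on $u(\g)$ does occur and is accounted for below.)

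Next I would apply Corollary \ref{classexfacgrsch} with $\omega_1=1$, $\omega_2=1$: exact factorizations $\B={\rm Coh}(G_1,1)\bullet{\rm Coh}(G_2,1)$ are classified by pairs $(G,\omega)$ where $G$ has an exact factorization $G=G_1G_2$ and $\omega\in H^3(G,\mathbb{G}_m)$ with $\omega_{|G_1}=1$, $\omega_{|G_2}=1$. So it remains to check two things. First, that the group scheme $G$ produced is again infinitesimal of height $\le 1$: this follows because, by the proof of Corollary \ref{classexfacgrsch}, $\O(G)\cong\O(G_1)\ot\O(G_2)=u(\g_1)^*\ot u(\g_2)^*$ as algebras, and a commutative Hopf algebra that is a tensor product of the coordinate rings of two infinitesimal height $\le 1$ group schemes is itself the coordinate ring of such a group scheme (equivalently, its dual $B^*=u(\g_1)\ot u(\g_2)$ is generated in ``degree one'' with $p$-th powers lying in the same span — more precisely, $\Spec B$ has Frobenius kernel equal to itself since both factors do). Hence $G$ corresponds to a restricted Lie algebra $\g$ with $u(\g)^*=\O(G)$, and the exact factorization $G=G_1G_2$ translates to $\g=\g_1\g_2$ by definition. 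Second, I would identify the cohomology: for an infinitesimal height $\le 1$ group scheme $G$ with restricted Lie algebra $\g$, we have $H^\bullet(G,\mathbb{G}_m)\cong H^\bullet(G,\mathbb{G}_a)$ in the relevant degree — more precisely, since $G$ is infinitesimal the sheaf cohomology $H^3(G,\mathbb{G}_m)$ identifies with the Hochschild (Sweedler) cohomology $H^3(u(\g),k)$ with trivial coefficients, because on an infinitesimal group scheme $\mathbb{G}_m$ and $\mathbb{G}_a$ have the same cohomology (the exponential/truncated-exponential map, or the fact that $1+\mathfrak{m}$ and $\mathfrak{m}$ agree after the relevant truncation). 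Under this identification $\omega_{|G_i}=1$ becomes triviality of $\omega$ on $u(\g_i)$.

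So the key steps, in order, are: (i) set up the equivalence between restricted Lie algebras and infinitesimal height $\le 1$ group schemes, compatible with subobjects, and note ${\rm Rep}(u(\g)^*)={\rm Coh}(G,1)$; (ii) invoke Corollary \ref{classexfacgrsch} with trivial $\omega_1,\omega_2$; (iii) check that the classifying group scheme $G$ is again infinitesimal of height $\le 1$, hence of the form associated to some $\g$, using $\O(G)\cong\O(G_1)\ot\O(G_2)$; (iv) translate the cohomological condition via $H^3(G,\mathbb{G}_m)\cong H^3(u(\g),k)$ and the restriction maps.

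I expect the main obstacle to be step (iii): verifying that a commutative Hopf algebra that is an algebra tensor product $\O(G_1)\ot\O(G_2)$ of two height $\le 1$ infinitesimal coordinate rings is again the coordinate ring of a height $\le 1$ infinitesimal group scheme. The point is subtle because the Hopf algebra structure on $B=\O(G)$ is \emph{not} the tensor product Hopf structure (the coproduct is twisted by the matched-pair data coming from $G=G_1G_2$); only the algebra structure is a tensor product. One therefore argues on the algebra side: $B$ is a finite local $k$-algebra whose maximal ideal $\mathfrak{m}$ satisfies $\mathfrak{m}^{[p]}:=\{x^p:x\in\mathfrak{m}\}\subset\mathfrak{m}^2$ (this \emph{is} an algebra property, inherited from each factor $\O(G_i)$, since height $\le 1$ for an infinitesimal group scheme is exactly the condition that the $p$-th power map on the augmentation ideal lands in its square), and a connected commutative Hopf algebra over an algebraically closed field with this property is precisely the coordinate ring of an infinitesimal group scheme of height $\le 1$ — equivalently, its dual is the restricted enveloping algebra of its Lie algebra of primitives. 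Once this structural point is pinned down the rest is bookkeeping, and I would cite \cite[Section 2.2]{G2} and standard references on finite group schemes (e.g.\ Demazure--Gabriel) for the relevant facts rather than redoing them.
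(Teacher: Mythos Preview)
Your approach is essentially the same as the paper's: reduce to Corollary~\ref{classexfacgrsch} with $\omega_1=\omega_2=1$, check that the resulting $G$ is again height~$\le 1$ using the algebra isomorphism $\O(G)\cong\O(G_1)\ot\O(G_2)$, and translate the cohomology. One small slip in your step~(iii): the characterization of height~$\le 1$ is not $\mathfrak{m}^{[p]}\subset\mathfrak{m}^2$ but rather $x^p=0$ for every $x\in\mathfrak{m}$ (your stated condition is strictly weaker---e.g.\ $k[t]/(t^{p^2})$ satisfies it but has height~$2$); with the correct condition, inheritance from the factors is immediate since in a commutative $k$-algebra $(\sum a_i\ot b_i)^p=\sum a_i^p\ot b_i^p$. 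For step~(iv) the paper simply cites \cite[Theorem~5.4]{EG2} for $H^3(G,\mathbb{G}_m)\cong H^3(G,\mathbb{G}_a)$ rather than sketching the exponential argument.
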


\begin{proof}
Let $G_1,G_2$ be the finite group schemes with $\mathscr{O}(G_1)=u(\g_1)^*$ and $\mathscr{O}(G_2)=u(\g_2)^*$. By Corollary \ref{classexfacgrsch}, we have that exact factorizations $\B={\rm Rep}(u(\g_1)^*)\bullet {\rm Rep}(u(\g_2)^*)$ are classified by pairs $(G,\omega)$, where $G$ is a finite group scheme, with exact 
factorization $G=G_1G_2$ and $\omega$ in $H^3(G,\mathbb{G}_m)$ is trivial on $G_1$ and $G_2$. In particular, since $\mathscr{O}(G_1)$, $\mathscr{O}(G_2)$ are local Hopf algebras of height one, so is $\mathscr{O}(G)$ \footnote{I.e, $x^p=0$ for every $x$ in the maximal ideal of $\mathscr{O}(G)$.}. Thus, $\mathscr{O}(G)=u(\g)^*$ for some finite dimensional restricted Lie algebra $\g$. Finally, $H^3(G,\mathbb{G}_m)\cong H^3(G,\mathbb{G}_a)$ by \cite[Theorem 5.4]{EG2}.
\end{proof}

\section{Extensions of tensor categories}\label{extftcs}

Given two finite tensor categories $\A,\C$, let ${\rm ExFac}(\C,\A)$ denote the set of all triples $(\B,\iota_{\A},\iota_{\C})$ such that $\B$ is a finite tensor category, $\iota_{\A}:\A\xrightarrow{1:1} \B$ and $\iota_{\C}:\C\xrightarrow{1:1} \B$ are injective tensor functors, and $\B=\iota_{\A}(\A)\bullet \iota_{\C}(\C)$. 
Namely, $(\B,\iota_{\A},\iota_{\C})$ belongs to ${\rm ExFac}(\C,\A)$ if and only if $\A\boxtimes \C\xrightarrow{\ot\circ(\iota_{\A}\boxtimes\iota_{\C})}\B$ is an equivalence of abelian categories.

The following result relates exact factorizations and exact sequences of finite tensor categories (see \ref{esftc}). In particular, it extends \cite[Theorem 4.1]{G} to the finite case.

\begin{theorem}\label{mainnew}
Let $\A,\C$ be two finite tensor categories, and let $\M$ be any indecomposable exact $\A$-module category. The following hold:
\begin{enumerate}

\item
We have a map of sets
$$\alpha_{\M}:\Bxt(\C,\A,\M)\to {\rm ExFac}(\C^{\rm op},\A^*_{\M}),$$
given by $$\alpha_{\M}(\B,\iota_{\A},F)=(\mathscr{B}^*_{\C\boxtimes \mathscr{M}},\iota_{\A^*_{\M}},F^*_{\C\boxtimes \mathscr{M}}).$$

\item
We have a map of sets
$$\beta_{\M}:{\rm ExFac}(\C,\A) \to \Bxt(\C^{\rm op},\A^*_{\M},\M),$$
given by $$\beta_{\M}(\B,\iota_{\A},\iota_{\C})=(\B^*_{\B\boxtimes_{\A} \mathscr{M}},\iota_{\A^*_{\M}},(\iota_{\C})^*_{\B\boxtimes_{\A} \mathscr{M}}).$$
\end{enumerate} 
\end{theorem}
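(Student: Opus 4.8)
The plan is to prove the two statements as mirror images of each other, using the machinery recalled in Subsection \ref{esftc} as a black box. The core idea is that an exact sequence of finite tensor categories and an exact factorization are two sides of the same coin, linked by the equivalence $\Phi_*$ of \eqref{equphistar} and the duality construction \eqref{ES1}. For part (1), I start from an exact sequence $\A\xrightarrow{\iota}\B\xrightarrow{F}\C\boxtimes\Bnd(\M)$. I take its dual sequence with respect to the module category $\C\boxtimes\M$ over $\C\boxtimes\Bnd(\M)$; by the computation recalled after \eqref{ES}, we have $(\C\boxtimes\Bnd(\M))^*_{\C\boxtimes\M}=\C^{*\rm op}\cong\C^{\rm op}$ (since $\C^*_{\C}\cong\C^{\rm op}$, viewing $\C$ as a module over itself), and $\A^*_{\M}$ appears as the kernel. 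So \eqref{ES1} specializes to an exact sequence $\A^*_{\M}\to\B^*_{\C\boxtimes\M}\to\C^{\rm op}\boxtimes\Bnd(\pt)=\C^{\rm op}$, i.e.\ a \emph{normal} surjective functor onto $\C^{\rm op}$ with kernel $\A^*_{\M}$. The key point is then to upgrade this exact sequence into an exact factorization: an exact sequence with trivial module category ($\M=\Vect$ on the target side) forces the two subcategories $\iota_{\A^*_{\M}}(\A^*_{\M})$ and a chosen splitting image of $\C^{\rm op}$ to have trivial intersection and multiply their FP dimensions to $\FPdim(\B^*_{\C\boxtimes\M})$, which by Theorem \ref{exfac}(3) (together with exactness, which follows from $\Phi_*$ being an equivalence) gives $\B^*_{\C\boxtimes\M}=\A^*_{\M}\bullet\C^{\rm op}$. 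This is exactly the content of $\alpha_{\M}$ being well-defined; one also checks it descends to equivalence classes.

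**The reverse direction.**
For part (2), I run the same argument backwards. Given $(\B,\iota_{\A},\iota_{\C})\in{\rm ExFac}(\C,\A)$, I regard $\B$ as a left $\A$-module category and form $\B\boxtimes_{\A}\M$, an exact $\A^*_{\M}$-module category. Then $\B^*_{\B\boxtimes_{\A}\M}$ is a finite tensor category, and the functors $(\iota_{\A})^*$, $(\iota_{\C})^*$ induce tensor functors into it. I need to show $(\iota_{\C})^*_{\B\boxtimes_{\A}\M}:\C^{\rm op}\to\B^*_{\B\boxtimes_{\A}\M}$ is a normal surjective functor whose kernel is $\iota_{\A^*_{\M}}(\A^*_{\M})$, i.e.\ that we get an exact sequence representing an element of $\Bxt(\C^{\rm op},\A^*_{\M},\M)$. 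The FP dimension count is the easy part: $\FPdim(\B^*_{\B\boxtimes_{\A}\M})=\FPdim(\B)$ (dual categories have equal FP dimension, \cite[Theorem 2.4.1]{EO} type statement, or \cite{DSS}), and $\FPdim(\B)=\FPdim(\A)\FPdim(\C)=\FPdim(\A^*_{\M})\FPdim(\C^{\rm op})$ since dualizing preserves FP dimension. So by \cite[Theorem 3.4]{EG} the candidate sequence has the right dimensions; what remains is normality, which I would extract from the fact that the exact factorization $\B=\A\bullet\C$ makes $\B\boxtimes_{\A}\M$ decompose nicely — concretely, $\B\boxtimes_{\A}\M\cong\C\boxtimes\M$ as $\C$-module categories (mimicking \eqref{equphistar} in reverse), and under this identification the $\C^{\rm op}$-action and the $\A^*_{\M}$-action are the obvious ones on a Deligne product, from which normality is automatic (cf.\ the example in \ref{esftc} that $\C\boxtimes\A\to\C$ is always normal).

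**Main obstacle.**
The routine parts are the FP-dimension bookkeeping and invoking \eqref{equphistar}, \eqref{ES1}, Theorem \ref{exfac}, and \cite[Theorem 3.4]{EG}. The genuinely delicate step is verifying \emph{normality} of the constructed functors — i.e.\ that in both directions the surjective functor we build is not merely surjective with the expected kernel, but actually defines an exact sequence in the sense of \ref{esftc} (the existence, for every object $X$, of a subobject $X_0$ with $F(X_0)$ the largest subobject of $F(X)$ lying in the kernel-complement). For part (1) this should follow because dualizing an exact sequence yields an exact sequence (\cite[Theorem 4.1]{EG}), so normality is inherited; the care needed is in correctly identifying $(\C\boxtimes\Bnd(\M))^*_{\C\boxtimes\M}$ with $\C^{\rm op}$ and checking the kernel of the dual functor is precisely $\A^*_{\M}$ (here one uses that $\iota^*$ in \eqref{ES1} is injective with image the kernel, which is part of the statement of \cite[Theorem 4.1]{EG}). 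For part (2), since there is no ambient exact sequence to dualize, normality must be checked by hand using the product decomposition $\B\boxtimes_{\A}\M\cong\C\boxtimes\M$; establishing this decomposition rigorously — tracking the two commuting module structures through the identification — is where the real work lies, and I expect it to be the technical heart of the proof. Finally, one checks that $\alpha_{\M}$ and $\beta_{\M}$ are well-defined on equivalence classes, which is formal once the constructions are shown to be functorial in the exact sequence / exact factorization.
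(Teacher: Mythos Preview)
Your part (2) is essentially the paper's argument: form $\N:=\B\boxtimes_{\A}\M$, identify it with $\C\boxtimes\M$ using the exact factorization, and then invoke \cite[Theorem 3.4]{EG}. One comment: you do not need to ``check normality by hand''. Once you have $\iota_{\A^*_\M}$ injective, $(\iota_\C)^*_{\N}$ surjective, and $\A^*_\M$ contained in the kernel, the FP dimension equality alone forces the sequence to be exact by \cite[Theorem 3.4]{EG}; normality is part of what that theorem delivers.

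Part (1), however, has a concrete error and a genuine gap. First, you have the dual sequence \eqref{ES1} reversed. Taking $\N=\C$ there yields
\[
\C^{\rm op}\xrightarrow{F^*}\B^*_{\C\boxtimes\M}\xrightarrow{(\iota_\A)^*}\A^*_{\M}\boxtimes\Bnd(\C),
\]
so $\C^{\rm op}$ is the \emph{injected} subcategory and the surjection lands in $\A^*_\M\boxtimes\Bnd(\C)$, not the other way around. There is no ``splitting image of $\C^{\rm op}$'' to choose; $F^*$ already embeds $\C^{\rm op}$. The gap is that the dual sequence does \emph{not} hand you an embedding of $\A^*_\M$ into $\B^*_{\C\boxtimes\M}$. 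The paper obtains this separately: picking an exact algebra $A\in\A$ with $\M\cong\Mod(A)_\A$, one identifies $\B^*_{\C\boxtimes\M}\cong\Bimod_\B(A)^{\rm op}$, and then $\A^*_\M=\Bimod_\A(A)^{\rm op}\hookrightarrow\Bimod_\B(A)^{\rm op}$ is the embedding $\iota_{\A^*_\M}$.

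Second, your proposed verification via Theorem~\ref{exfac}(3) requires showing that $\B^*_{\C\boxtimes\M}$ is exact as an $\A^*_\M\boxtimes\C$-module category, and ``follows from $\Phi_*$ being an equivalence'' is not enough: one must know that under $\Phi_*$ the $\A^*_\M$- and $\C^{\rm op}$-actions correspond to the obvious ones on $\A^*_\M\boxtimes\C^{\rm op}$. The paper sidesteps this by using Theorem~\ref{exfac}(1) instead: it applies $\Phi_*$ (from \cite[Theorem 3.6]{EG}) to the dual sequence, obtaining $\Phi_*:\B^*_{\C\boxtimes\M}\xrightarrow{\cong}\A^*_\M\boxtimes\C^{\rm op}$ as $\B^*_{\C\boxtimes\M}$-module categories, and then verifies by a direct computation that $\Phi_*(X\otimes Y)=X\boxtimes Y$ for $X\in\A^*_\M$, $Y\in\C^{\rm op}$. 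This immediately gives that every simple of $\B^*_{\C\boxtimes\M}$ is of the form $X\otimes Y$ with $P(X\otimes Y)=P_{\A^*_\M}(X)\otimes P_{\C}(Y)$, which is condition (1) of Theorem~\ref{exfac}. That explicit check of $\Phi_*(X\otimes Y)=X\boxtimes Y$ is the technical heart of part (1), and it is what your outline is missing.
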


\begin{proof}
(1) Let 
\begin{equation}\label{startwithext}
\A\xrightarrow{\iota_{\A}} \mathscr{B}\xrightarrow{F} \C\boxtimes \Bnd(\M)
\end{equation}
be an exact sequence with respect to $\M$. Choose a generator $M$ for $\mathscr{M}$, and let $A:=\underline{\Bnd}_{\A}(M)$. Then $A$ is an indecomposable exact algebra in $\A$, and $\M\cong\Mod(A)_{\A}$ as $\A$-module categories (see \ref{exmodcat}). Since $A$ is also an indecomposable algebra in $\mathscr{B}$, it follows that  
the category $\Mod(A)_{\mathscr{B}}=\B\boxtimes_{\A}\M$ is an indecomposable $\B$-module category in the usual way. By \cite[Theorem 3.6]{EG}, we have an equivalence of $\B$-module categories $\B\boxtimes_{\A}\M\cong\C\boxtimes \mathscr{M}$, and  by \cite[Theorem 2.9]{EG}, $\C\boxtimes \mathscr{M}$ is exact over $\B$.  
Thus, $\mathscr{B}^*_{\C\boxtimes \mathscr{M}}\cong \B^*_{\Mod(A)_{\B}}\cong \Bimod_{\mathscr{B}}(A)^{\rm op}$ as tensor categories \cite[Remark 7.12.5]{EGNO}. Thus, we see that there is an inclusion of tensor categories
$\iota_{\A^*_{\M}}:\A^*_{\M}=\Bimod_{\A}(A)^{\rm op}\xrightarrow{1:1}\mathscr{B}^*_{\C\boxtimes \mathscr{M}}$.

Dualizing (\ref{startwithext}) with respect to $\C\boxtimes \mathscr{M}$, we get an exact sequence
\begin{equation}\label{exseqdual}
\C^{\rm op}\xrightarrow{F^*_{\C\boxtimes \mathscr{M}}} \mathscr{B}^*_{\C\boxtimes \mathscr{M}}\xrightarrow{(\iota_{\A})^*_{\C\boxtimes \mathscr{M}}} \A^*_{\M}\boxtimes \Bnd(\C)
\end{equation}
of finite tensor categories with respect to the indecomposable exact $\C^{\rm op}$-module category $\C$ (see (\ref{ES1})).  
In particular, $\C^{\rm op}$ can be identified with a tensor subcategory of $\mathscr{B}^{*}_{\C\boxtimes \mathscr{M}}$ via $F^*_{\C\boxtimes \mathscr{M}}$.

Now applying \cite[Theorem 3.6]{EG} (see (\ref{equphistar})) to the exact sequence (\ref{exseqdual}) provides a natural equivalence 
$$\Phi_*:\mathscr{B}^*_{\C\boxtimes \mathscr{M}}\xrightarrow{\cong} \A^*_{\M}\boxtimes \C^{\rm op}$$
of module categories over $\mathscr{B}^*_{\C\boxtimes \mathscr{M}}$. We claim that 
$$\Phi_*(X\ot Y)=X\boxtimes Y,\,\,\,X\in \A^*_{\M},\, Y\in \C^{\rm op}.$$ 
Indeed, note that an object $Y \in \C^{\rm op}$ acts on $\C \boxtimes \M$ via 

\begin{equation*}
Y_* := (\cdot \otimes Y) \boxtimes {\rm Id}_{\M} : \C \boxtimes \M \to \C \boxtimes \M.
\end{equation*}
This action is $\C \boxtimes {\rm End}(\M)$-linear, hence 
is also  $\B$-linear since the action of $\B$ on $\C \boxtimes \M$
factors through $\C \boxtimes {\rm End}(\M)$.
So, $F^*$ maps $Y$ to 
$Y_*  \in \B^*_{\C\boxtimes \M}$.

Also, note that an object $X \in \A^*_{\M}$ acts on $\B \boxtimes_{\A} \M$ via
\begin{equation*}
{\rm Id}_{\B} \boxtimes_{\A} X : \B \boxtimes_{\A} \M \to \B \boxtimes_{\A} \M,
\end{equation*}
and on $\C \boxtimes \M$ via
\begin{equation*}
X_* := {\rm Id}_{\C} \boxtimes X : \C \boxtimes \M \to \C \boxtimes \M.
\end{equation*}
Since the isomorphism $\Phi_*: \B \boxtimes_{\A} \M \xrightarrow{\cong} \C \boxtimes \M$ 
is right $\A^{{*\rm op}}_{\M}$-linear, we see that the two actions of $X$ on $ \B\boxtimes_{\A}\M$
and $\C\boxtimes\M$ given above correspond under $\Phi_*$.
Thus, $X \in \A^{{*\rm op}}_{\M}$ corresponds to 
$X_* \in \B^*_{\C \boxtimes \M}$.

It now follows from the above that 
\begin{equation*}
\Phi_*(X \otimes Y) = \Phi_*( X_* \circ Y_*) 
= \Phi_*( ( \cdot \otimes Y) \boxtimes X) = X \boxtimes Y,
\end{equation*}
as claimed.

Finally, it follows that for every simples $X\in \A^*_{\M}$, $Y\in \C$, $X\ot Y$ is simple in $\mathscr{B}^*_{\C\boxtimes \mathscr{M}}$, and every simple in $\mathscr{B}^*_{\C\boxtimes \mathscr{M}}$ is of this form. Moreover, since $\Phi_*(P_{\A^*_{\M}}(X)\ot P_{\C}(Y))=P_{\A^*_{\M}}(X)\boxtimes P_{\C}(Y)$ is the projective cover of $X\boxtimes Y$ in $\A^*_{\M}\boxtimes \C$, it follows that $P_{\A^*_{\M}}(X)\ot P_{\C}(Y)$ is the projective cover of $X\ot Y$ in $\mathscr{B}^*_{\C\boxtimes \mathscr{M}}$, so Theorem \ref{exfac} implies $\mathscr{B}^*_{\C\boxtimes \mathscr{M}}=\A^*_{\M}\bullet \C^{\rm op}$.

(2) Let $\B=\C\bullet \A$ be an exact factorization of finite tensor categories. Let $A\in\A$ be an exact algebra such that $\M=\Mod(A)_{\A}$. Since by \cite[Corollary 12.4]{EtO}, $A$ is also an exact algebra in $\B$, it follows that the category $\N:=\Mod(A)_{\B}$ is an indecomposable exact $\B$-module category. Since $\N=\B\boxtimes_{\A}\M=(\C\bullet \A)\boxtimes_{\A}\M$, it follows that $\N=\C\boxtimes \M$ as $\C$-module categories, so $\N$ is exact over $\C$. Note that we have $\C^*_{\N}=\C^*_{\C\boxtimes \M}=\C^{\rm op}\boxtimes \Bnd(\M)$.

Now, let $\A^*_{\M}$, $\B^*_{\N}$ be the dual finite tensor categories 
of $\A$, $\B$ with respect to $\M$, $\N$, respectively. Then we have a sequence
\begin{equation}\label{newexs}
\A^*_{\M}\xrightarrow{\iota_{\A^*_{\M}}} \B^*_{\N}\xrightarrow{(\iota_{\C})^*_{\N}} \C^{\rm op}\boxtimes \Bnd(\M).
\end{equation}
Note that $\A^*_{\M}$ is in the kernel of $(\iota_{\C})^*_{\N}$.  
Moreover, by Theorem \ref{exfac}, we have $\FPdim(\B)=\FPdim(\A)\FPdim(\C)$. Hence 
\begin{eqnarray*}
\lefteqn{\FPdim(\A^*_{\M})\FPdim(\C^{\rm op})=\FPdim(\A)\FPdim(\C)}\\
& = & \FPdim(\B)=\FPdim(\B^*_{\N}).
\end{eqnarray*}
Thus, by \cite[Theorem 3.4]{EG}, the sequence (\ref{newexs}) is exact with respect to $\M$, as claimed.
\end{proof}

\begin{corollary}\label{equiv} 
Let $\B=\C^{G}$ be a $G$-equivariantization of a finite tensor category $\C$ by a finite group scheme $G$ over $k$ (see \ref{acgrsc}). Then $\B^*_{\C}$ admits an exact factorization $\B^*_{\C}= \C^{{\rm op}}\bullet{\rm Coh}(G)$.
\end{corollary}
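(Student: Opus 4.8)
The plan is to realize $\C^G$ as an extension of $\C$ by $\Rep(G)$ in the sense of \ref{esftc}, taken with respect to the fiber-functor module category $\Vect$ over $\Rep(G)$, and then to feed this into Theorem \ref{mainnew}(1). First I would set up the exact sequence. Recall from \ref{acgrsc} that the forgetful functor $F\colon\C^G\to\C$ is a dominant exact tensor functor, and that $\Rep(G)$ is the tensor subcategory of $\C^G$ of equivariant objects whose underlying object in $\C$ is a multiple of $\be$; thus $\Rep(G)=\Ker(F)$. I would give $\Vect$ the structure of a $\Rep(G)$-module category via the fiber functor $\Rep(G)\to\Vect$, i.e.\ $\Vect=\Mod(\O(G))_{\Rep(G)}$ for the regular algebra $\O(G)$ in $\Rep(G)$; this is an indecomposable exact module category with $\Bnd(\Vect)=\Vect$, so $\C\boxtimes\Bnd(\Vect)=\C$ and $\Ker(F)=\{X\in\C^G\colon F(X)\in\Bnd(\Vect)\}$. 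By \ref{gscth}, applied with $(G,\omega,H,\psi)=(G,1,G,1)$ and $(L,\eta)=(1,1)$ (so that the module category $\N(1,1)$ is precisely this fiber-functor $\Vect$), one has $\Rep(G)^*_{\Vect}=\C(G,1,1,1)=\Coh(G)$. I would then record the Frobenius--Perron dimension identity $\FPdim(\C^G)=\FPdim(\Rep(G))\,\FPdim(\C)$; since $\FPdim(\Rep(G))=\dim_k\O(G)$, this is the equivariantization dimension count for finite group schemes, which follows, e.g., from the fact that the right adjoint of $F$ carries $\be$ to the regular object $\O(G)\ot\be$ of Frobenius--Perron dimension $\dim_k\O(G)$. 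Granting this, \cite[Theorem 3.4]{EG} shows that $\Rep(G)\xrightarrow{\iota}\C^G\xrightarrow{F}\C=\C\boxtimes\Bnd(\Vect)$ is an exact sequence of finite tensor categories with respect to $\Vect$, i.e.\ $(\C^G,\iota,F)\in\Bxt(\C,\Rep(G),\Vect)$.

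Next I would apply Theorem \ref{mainnew}(1) with $\A=\Rep(G)$ and $\M=\Vect$. It produces $\alpha_{\Vect}(\C^G,\iota,F)=\bigl((\C^G)^*_{\C\boxtimes\Vect},\,\iota_{\Rep(G)^*_{\Vect}},\,F^*_{\C\boxtimes\Vect}\bigr)\in{\rm ExFac}\bigl(\C^{\rm op},\Rep(G)^*_{\Vect}\bigr)={\rm ExFac}(\C^{\rm op},\Coh(G))$. By the definition of ${\rm ExFac}$ this says precisely that $\B^*_{\C}=(\C^G)^*_{\C}=(\C^G)^*_{\C\boxtimes\Vect}=\Coh(G)\bullet\C^{\rm op}$, and by Remark \ref{permute} this equals $\C^{\rm op}\bullet\Coh(G)$, which is the asserted exact factorization.

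The only substantive input, hence the main obstacle, is the existence of the equivariantization exact sequence $\Rep(G)\to\C^G\to\C$ with respect to $\Vect$; by \cite[Theorem 3.4]{EG} this reduces entirely to the dimension identity $\FPdim(\C^G)=\dim_k(\O(G))\cdot\FPdim(\C)$ (equivalently, normality of the forgetful functor), which is classical in the finite group case and carries over to finite group schemes. Everything else is formal bookkeeping with Theorem \ref{mainnew}, Remark \ref{permute}, and the identification $\Rep(G)^*_{\Vect}=\Coh(G)$ from \ref{gscth}.
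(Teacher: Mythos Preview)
Your proposal is correct and follows essentially the same route as the paper: assert the equivariantization exact sequence $\Rep(G)\xrightarrow{\iota}\C^G\xrightarrow{F}\C$ with respect to the fiber functor $\Vect$, then apply Theorem~\ref{mainnew}(1) together with the identification $\Rep(G)^*_{\Vect}=\Coh(G)$. The paper simply asserts the exact sequence (referring to Example~\ref{repG}) and invokes Theorem~\ref{mainnew}, whereas you spell out the dimension count via \cite[Theorem 3.4]{EG} and the use of Remark~\ref{permute}; these are exactly the details implicit in the paper's two-line argument.
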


\begin{proof}
We have an exact sequence
$${\rm Rep}(G)\xrightarrow{\iota}\mathscr{B}\xrightarrow{F} \C$$ 
with respect to the standard fiber functor on ${\rm Rep}(G)$ (see Example \ref{repG}), 
so the claim follows from Theorem \ref{mainnew}.  
\end{proof}

\begin{corollary}\label{integral}
If $\A,\C$ are finite integral tensor categories, then any extension of $\C$ by $\A$ is integral.
 
Conversely, if some extension of $\C$ by $\A$ is integral then $\A,\C$ are integral.
\end{corollary}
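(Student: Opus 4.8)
The plan is to reduce the statement to the case of exact factorizations, already settled in Corollary \ref{pointedexfac}(2), by passing to dual categories through the map $\alpha_{\M}$ of Theorem \ref{mainnew}(1). The one extra ingredient I need is that \emph{integrality is invariant under Morita equivalence of finite tensor categories}: if $\D$ is integral and $\mathscr{M}$ is an indecomposable exact $\D$-module category, then $\D^*_{\mathscr{M}}$ is integral (and conversely, as $(\D^*_{\mathscr{M}})^*_{\mathscr{M}}\cong\D$). I also use two elementary facts: a tensor subcategory of an integral finite tensor category is integral (a fully faithful tensor functor preserves Frobenius--Perron dimensions of objects, and a simple object of a tensor subcategory, being closed under subquotients, remains simple in the ambient category), and $\C$ is integral if and only if $\C^{\rm op}$ is.

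Granting this, the forward direction runs as follows. Let $\A,\C$ be integral and let $\B$ be an extension of $\C$ by $\A$ with respect to an indecomposable exact $\A$-module category $\M$. Then $\A^*_{\M}$ is integral by Morita invariance, and $\C^{\rm op}$ is integral; by Theorem \ref{mainnew}(1) we have an exact factorization $\B^*_{\C\boxtimes\M}=\A^*_{\M}\bullet\C^{\rm op}$ of the finite tensor category $\B^*_{\C\boxtimes\M}$, so Corollary \ref{pointedexfac}(2) shows $\B^*_{\C\boxtimes\M}$ is integral. Now $\C\boxtimes\M$ is an indecomposable exact $\B$-module category, equivalent to $\B\boxtimes_{\A}\M=\Mod(A)_{\B}$ for an exact algebra $A$ in $\A$ (as in the proof of Theorem \ref{mainnew}(1)), and $\B$ is the dual of $\B^*_{\C\boxtimes\M}$ with respect to it; by Morita invariance again, $\B$ is integral. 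For the converse, let $\B$ be an extension of $\C$ by $\A$ with respect to $\M$ and assume $\B$ is integral. Then $\A\cong\iota_{\A}(\A)=\Ker(F)$ is a tensor subcategory of $\B$, hence integral; and $\B^*_{\C\boxtimes\M}$ is integral by Morita invariance, so from $\B^*_{\C\boxtimes\M}=\A^*_{\M}\bullet\C^{\rm op}$ (Theorem \ref{mainnew}(1)) and the fact that $\C^{\rm op}$ is a tensor subcategory of it, $\C^{\rm op}$ and hence $\C$ is integral.

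I expect the Morita invariance of integrality to be the main obstacle. One direction of it is clean: $\Rep(H,\Phi)$ is integral for every finite dimensional quasi-Hopf algebra $(H,\Phi)$, because the Grothendieck ring of $\Rep(H,\Phi)$ on the basis of simple objects has the same structure constants as the category of $H$-modules under $\otimes_k$ (these do not depend on $\Phi$), so the vector of $k$-dimensions of the simple $H$-modules is a positive common eigenvector of the left-multiplication operators, whence $\FPdim(X)=\dim_k X\in\mathbb{Z}$ for every simple $X$; together with \cite[Proposition 2.6]{EO} this identifies the integral finite tensor categories with the $\Rep(H,\Phi)$. It then remains to show that $\D^*_{\M}$ is integral whenever $\D$ is, which I would either cite (this is known for fusion categories, and the argument is not special to the semisimple case) or prove directly from $\D^*_{\M}\cong\Bimod_{\D}(A)^{\rm op}$, $A$ an exact algebra in $\D$, by computing the Frobenius--Perron dimensions of the simple objects of $\D^*_{\M}$ in terms of the internal Homs $\underline{\Hom}(M_i,M_j)\in\D$ of the simple objects $M_i$ of $\M$, which have integer Frobenius--Perron dimension since $\D$ is integral. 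Given this, the two chains of implications above finish the proof.
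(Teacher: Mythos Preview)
Your proposal is correct and follows essentially the same route as the paper: pass via Theorem \ref{mainnew}(1) to the exact factorization $\B^*_{\C\boxtimes\M}=\A^*_{\M}\bullet\C^{\rm op}$ and then invoke Corollary \ref{pointedexfac}(2), using Morita invariance of integrality to go back and forth. The only difference is in how that Morita invariance is justified: the paper simply cites \cite[Proposition 2.7]{EG} to conclude that FP dimensions in $\A^*_{\M}$ are rational when $\A$ is integral, and then uses that FP dimensions are algebraic integers, which is shorter than the internal-Hom computation you sketch.
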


\begin{proof}
Assume $\A,\C$ are integral, and $\B\in \Bxt(\C,\A,\M)$. By Theorem \ref{mainnew}, we have an exact factorization $\mathscr{B}^*_{\C\boxtimes \mathscr{M}}=\C^{\rm op}\bullet \A^*_{\M}$. Clearly, $\C^{\rm op}$ is integral. Also, since $\A$ is integral, it follows from \cite[Proposition 2.7]{EG} that FP dimensions of objects in $\A^*_{\M}$ are rational numbers. But FP dimensions are algebraic integers, so $\A^*_{\M}$ is integral too. Hence, $\mathscr{B}^*_{\C\boxtimes \mathscr{M}}$ is integral by Corollary \ref{pointedexfac}(2), so $\B$ is integral.

Conversely, assume $\B\in \Bxt(\C,\A,\M)$ is integral. By Theorem \ref{mainnew}, we have an exact factorization $\mathscr{B}^*_{\C\boxtimes \mathscr{M}}=\C^{\rm op}\bullet \A^*_{\M}$. Since $\B$ is integral, $\mathscr{B}^*_{\C\boxtimes \mathscr{M}}$ is integral, hence so are $\C^{\rm op}$ and $\A^*_{\M}$. Thus $\A,\C$ are integral.
\end{proof}

In Theorem \ref{mainnew}, setting $\alpha:=\alpha_{\A}$ and $\beta:=\beta_{\A}$, we have
\begin{equation*}\label{alphabeta}
\alpha(\B,\iota_{\A},F)=(\B^{\rm op},\iota_{\A^{\rm op}},F^*_{\C\boxtimes \A}),\,\,\,\beta(\B,\iota_{\A},\iota_{\C})=(\B^{\rm op},\iota_{\A^{\rm op}},(\iota_{\C})^*_{\B}).
\end{equation*}

\begin{corollary}\label{newcorollary}
The maps $\alpha,\beta$ are inverse to each other. Thus, we have a bijection of sets
$${\rm ExFac}(\C,\A)=\Bxt(\C,\A,\A).$$
\end{corollary}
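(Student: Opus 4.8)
The goal is to show that the two maps $\alpha = \alpha_\A$ and $\beta = \beta_\A$ from Theorem \ref{mainnew} (specialized to $\M = \A$, the regular module category, so that $\A^*_\A = \A^{\rm op}$) are mutually inverse bijections between $\Bxt(\C,\A,\A)$ and ${\rm ExFac}(\C,\A)$. Since for $\M = \A$ we have $\A^*_\M \cong \A^{\rm op}$ (and $(\A^{\rm op})^{\rm op} = \A$), and the dual module category $\C\boxtimes\M = \C\boxtimes\A$ has $(\C\boxtimes\Bnd(\A))^*_{\C\boxtimes\A} = \C^{\rm op}$, all the decorated duals collapse as recorded in the displayed formulas just above the statement: $\alpha$ sends $(\B,\iota_\A,F)$ to $(\B^{\rm op},\iota_{\A^{\rm op}},F^*_{\C\boxtimes\A})$, and $\beta$ sends $(\B,\iota_\A,\iota_\C)$ to $(\B^{\rm op},\iota_{\A^{\rm op}},(\iota_\C)^*_\B)$. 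The plan is therefore to check $\alpha\circ\beta = \id$ and $\beta\circ\alpha = \id$ directly, by unwinding these formulas and invoking biduality.

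\emph{First} I would verify that both composites are well-defined endomaps (the codomains match after applying $(-)^{\rm op}$ twice), which is immediate from the formulas. \emph{Then} the heart of the argument is biduality for dual tensor categories with respect to module categories: given an exact sequence $\A \xrightarrow{\iota_\A} \B \xrightarrow{F} \C\boxtimes\Bnd(\A)$, applying $\alpha$ produces the exact factorization $\B^{\rm op} = \A^{\rm op}\bullet\C^{\rm op}$ together with the embeddings $\iota_{\A^{\rm op}}$ and $F^*_{\C\boxtimes\A}$ of $\A^{\rm op}$ and $\C^{\rm op}$; then applying $\beta$ to this exact factorization of $\B^{\rm op}$, with respect to the regular module category over $\A^{\rm op}$, dualizes back to an exact sequence whose ambient category is $(\B^{\rm op})^{\rm op} = \B$. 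One must check that the resulting embedding of $\A = (\A^{\rm op})^{\rm op}$ is again $\iota_\A$ and the resulting quotient functor is again $F$ (up to the canonical equivalences identifying $\B^{**}$-type constructions with $\B$). This rests on the fact, recalled in \ref{esftc} via \cite[Theorem 3.6, Theorem 4.1]{EG}, that dualizing an exact sequence twice with respect to the regular module categories recovers the original sequence, i.e. the double-dual of $\B$ relative to $A\in\A$ and then $A\in\B$ is canonically $\B$; the construction $\Bimod_\B(A)^{\rm op}$ of $\B^*_{\Mod(A)_\B}$ and its re-dualization $\Bimod_{\B^*_{\cdots}}(A)^{\rm op}$ are biduality-inverse by \cite[Remark 7.12.5]{EGNO}. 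The same bookkeeping in the other order gives $\alpha\circ\beta = \id$: starting from $(\B,\iota_\A,\iota_\C)\in{\rm ExFac}(\C,\A)$, $\beta$ produces the exact sequence $\A^{\rm op}\to\B^{\rm op}\to\C^{\rm op}\boxtimes\Bnd(\A)$, and $\alpha$ dualizes it back to an exact factorization of $(\B^{\rm op})^{\rm op}=\B$ with the original embeddings restored.

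\emph{Finally}, I would conclude that $\alpha$ and $\beta$ are inverse bijections, hence ${\rm ExFac}(\C,\A)$ and $\Bxt(\C,\A,\A)$ are in bijection.

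\textbf{The main obstacle.} The real work is not conceptual but bookkeeping: one has to pin down precisely the canonical equivalences $(\B^{\rm op})^{\rm op}\cong\B$, $(\A^{\rm op})^*_{\A^{\rm op}}\cong\A$, and the identification of iterated $\Bimod$/$\boxtimes_\A$ constructions, and then verify that under these identifications the embeddings $\iota_{\A}, \iota_{\C}$ and the normal functor $F$ are literally recovered — not just some abstractly equivalent data. The cleanest route is probably to reduce everything to statements about the algebra $A = \underline{\Bnd}_\A(M)$ (here $M = \be$, so $A = \be$ and $\M = \A$, which simplifies matters considerably): in the regular case $\B^*_{\Mod(\be)_\B} = \Bimod_\B(\be)^{\rm op} = \B^{\rm op}$ on the nose, so the nested duals become $(\B^{\rm op})^{\rm op}$ with no nontrivial algebra intervening, and one just needs the elementary fact that double-op of a tensor functor between tensor categories is canonically the functor itself. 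With $\M=\A$ the potentially delicate coherences (associators for the equivalences $\B\boxtimes_\A\M\cong\C\boxtimes\M$, linearity of $\Phi_*$) all degenerate, so I expect the proof to be short: cite Theorem \ref{mainnew}, specialize $\M=\A$, use the displayed formulas for $\alpha,\beta$, and observe that applying the recipe twice is the identity because $(-)^{\rm op}$ is an involution and the module-category duals with respect to regular modules are biduality-inverse.
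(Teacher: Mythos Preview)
Your proposal is correct and follows essentially the same approach as the paper: compute $\alpha\beta$ and $\beta\alpha$ directly using the displayed formulas for $\alpha,\beta$ at $\M=\A$, then invoke biduality (double-dual is the identity) to recover the original data. The paper's proof is just the two-line computation you anticipate, using the identification $\C\boxtimes\A\cong\B$ as $\B$-module categories (coming from the exact factorization, or equivalently from $\Phi_*$) to rewrite $((\iota_\C)^*_\B)^*_{\C\boxtimes\A}$ as $((\iota_\C)^*_\B)^*_\B=\iota_\C$, and similarly $(F^*_{\C\boxtimes\A})^*_{\B^{\rm op}}=(F^*_\B)^*_{\B^{\rm op}}=F$.
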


\begin{proof}
We have  
\begin{eqnarray*}
\lefteqn{\alpha\beta(\B,\iota_{\A},\iota_{\C})}\\
& = & \alpha(\B^{\rm op},\iota_{\A^{\rm op}},(\iota_{\C})^*_{\B})=(\B,\iota_{\A},((\iota_{\C})^*_{\B})^*_{\C\boxtimes\A}))=(\B,\iota_{\A},((\iota_{\C})^*_{\B})^*_{\B}))\\
& = & (\B, \iota_{\A},\iota_{\C}),
\end{eqnarray*} 
and
\begin{eqnarray*}
\lefteqn{\beta\alpha(\B,\iota_{\A},F)}\\
& = & \beta(\B^{\rm op},\iota_{\A^{\rm op}},F^*_{\C\boxtimes \A})
=(\B,\iota_{\A},(F^*_{\C\boxtimes \A})^*_{\B^{\rm op}})=(\B,\iota_{\A},(F^*_{\B})^*_{\B^{\rm op}})\\
& = & (\B, \iota_{\A},F),
\end{eqnarray*}
as claimed.
\end{proof}

\begin{corollary}\label{chevalleyprop}
If $\A,\C$ are finite tensor categories with the Chevalley property, then any $\B\in \Bxt(\C,\A,\A)$ has the Chevalley property. 

Conversely, if some $\B\in \Bxt(\C,\A,\A)$ has the Chevalley property then $\A,\C$ have the Chevalley property.
\end{corollary}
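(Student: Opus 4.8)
The plan is to deduce this from Corollary~\ref{pointedexfac}(5) together with the correspondence between extensions and exact factorizations established in Theorem~\ref{mainnew}(1) (equivalently, Corollary~\ref{newcorollary}). Concretely, given $\B\in\Bxt(\C,\A,\A)$, applying Theorem~\ref{mainnew}(1) with $\M=\A$ --- so that $\A^*_{\M}=\A^{\rm op}$ and $\mathscr{B}^*_{\C\boxtimes\A}=\B^{\rm op}$, as computed in the proof of Theorem~\ref{mainnew}(1) and recorded in the displayed formula for $\alpha=\alpha_{\A}$ --- yields an exact factorization $\B^{\rm op}=\A^{\rm op}\bullet\C^{\rm op}$ of finite tensor categories.

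The remaining ingredient is the elementary observation that the Chevalley property is invariant under passage to the opposite tensor category: a finite tensor category $\D$ and its opposite $\D^{\rm op}$ have the same underlying abelian category (hence the same semisimple objects), and the tensor product of $\D^{\rm op}$ is that of $\D$ with the two factors interchanged; so $\D$ has the Chevalley property if and only if $\D^{\rm op}$ does (and then $(\D^{\rm op})_{\rm ss}=(\D_{\rm ss})^{\rm op}$). Feeding the exact factorization $\B^{\rm op}=\A^{\rm op}\bullet\C^{\rm op}$ into Corollary~\ref{pointedexfac}(5), we get that $\A^{\rm op}$ and $\C^{\rm op}$ have the Chevalley property if and only if $\B^{\rm op}$ does. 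Chaining this with the op-invariance gives the string of equivalences: $\A,\C$ have the Chevalley property $\Longleftrightarrow$ $\A^{\rm op},\C^{\rm op}$ do $\Longleftrightarrow$ $\B^{\rm op}$ does $\Longleftrightarrow$ $\B$ does, which proves both the direct and the converse implications at once.

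I do not anticipate any genuine obstacle: the argument is essentially bookkeeping once Theorem~\ref{mainnew}(1), Corollary~\ref{pointedexfac}(5) and the op-invariance of the Chevalley property are in hand. The only point requiring a little care is keeping the opposite categories straight --- that the dual category produced by $\alpha_{\A}$ is $\B^{\rm op}$ rather than $\B$, that $\A^*_{\A}\cong\A^{\rm op}$, and that in Corollary~\ref{pointedexfac}(5) the two tensor subcategories may be interchanged freely (Remark~\ref{permute}), so that it is immaterial whether one writes $\B^{\rm op}=\A^{\rm op}\bullet\C^{\rm op}$ or $\B^{\rm op}=\C^{\rm op}\bullet\A^{\rm op}$. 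If one prefers, one can also phrase the whole thing without ever mentioning $\B^{\rm op}$ by noting that a category has the Chevalley property precisely when its opposite does and applying Corollary~\ref{pointedexfac}(5) to whichever exact factorization is most convenient.
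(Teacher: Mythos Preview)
Your proof is correct and follows essentially the same approach as the paper: the paper invokes Corollary~\ref{newcorollary} to get $\B=\C\bullet\A$ directly and then applies Corollary~\ref{pointedexfac}(5), whereas you unpack the same step via Theorem~\ref{mainnew}(1) to obtain $\B^{\rm op}=\A^{\rm op}\bullet\C^{\rm op}$ and then handle the op-invariance of the Chevalley property explicitly. The only difference is cosmetic---the paper absorbs the passage through opposites into its citation of Corollary~\ref{newcorollary}.
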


\begin{proof}
By Corollary \ref{newcorollary}, we have $\B=\C\bullet \A$, so the claim follows from Corollary \ref{pointedexfac}(5).
\end{proof}

Let $A$ be a finite dimensional Hopf algebra. Recall that the forgetful functor ${\rm Rep}(A)\to {\rm Vec}$, equipped with the standard tensor structure, determines on ${\rm Vec}$ a structure of an exact indecomposable module category over ${\rm Rep}(A)$. Recall also, that $\Rep(A)^*_{\Vect}=\Rep(A^*)$ as tensor categories.

\begin{corollary}\label{mainnew2}
Let $A$ be a finite dimensional Hopf algebra, and let ${\rm Vec}$ be the standard module category over ${\rm Rep}(A)$ as above. Let $\C$ be any finite tensor category. Then the following hold:

\begin{enumerate}
\item
We have a map of sets
$$\alpha_{A}:\Bxt(\C,{\rm Rep}(A),{\rm Vec})\xrightarrow{\cong} {\rm ExFac}(\C^{\rm op},{\rm Rep}(A^*)),$$
given by $$\alpha_{A}(\B,\iota_{{\rm Rep}(A)},F)=\left(\B^*_{\C},\iota_{{\rm Rep}(A^*)},F^*_{\C}\right).$$
\item
We have a map of sets
$$\beta_{A}:{\rm ExFac}(\C,{\rm Rep}(A))\xrightarrow{\cong}\Bxt(\C^{\rm op},{\rm Rep}(A^*),{\rm Vec}),$$
given by $$\beta_{A}(\B,\iota_{_{{\rm Rep}(A)}},\iota_{\C})=\left(\B^*_{\C},\iota_{{\rm Rep}(A^*)},(\iota_{\C})^*_{\B\boxtimes_{\A} {\rm Vec}}\right).$$
\item
$\alpha_A$, $\beta_{A^*}$ are inverse to each other. Thus, $\alpha_A$, $\beta_{A}$ are bijective.
\end{enumerate} 
\end{corollary}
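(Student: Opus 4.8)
The plan is to deduce the whole statement from Theorem \ref{mainnew}, specialized to $\A:=\Rep(A)$ together with the standard module category $\M:=\Vect$ over $\Rep(A)$, using the identifications $\Bnd(\Vect)=\Vect$, $\C\boxtimes\Vect=\C$, $\Rep(A)^*_{\Vect}=\Rep(A^*)$ and $A^{**}=A$. Parts (1) and (2) are then a direct translation. For (1), substituting $\A=\Rep(A)$, $\M=\Vect$ in Theorem \ref{mainnew}(1) turns $\C\boxtimes\Bnd(\M)$ into $\C$ and $\A^*_{\M}$ into $\Rep(A^*)$, so $\alpha_{\M}$ becomes the asserted map $\alpha_A$ with $\alpha_A(\B,\iota,F)=(\B^*_{\C},\iota_{\Rep(A^*)},F^*_{\C})$. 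For (2), substituting in Theorem \ref{mainnew}(2) and noting, as in its proof, that $\B=\Rep(A)\bullet\C$ forces $\B\boxtimes_{\Rep(A)}\Vect\cong\C$ as $\C$-module categories, so $\B^*_{\B\boxtimes_{\Rep(A)}\Vect}=\B^*_{\C}$, turns $\beta_{\M}$ into the asserted map $\beta_A$. Bijectivity of both maps is postponed to (3).

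For (3), observe first that the composition $\beta_{A^*}\circ\alpha_A$ makes sense: $\alpha_A$ lands in ${\rm ExFac}(\C^{\rm op},\Rep(A^*))$, and $\beta_{A^*}$ (the map $\beta$ of part (2) for the Hopf algebra $A^*$) goes from there to $\Bxt((\C^{\rm op})^{\rm op},\Rep(A^*)^*_{\Vect},\Vect)=\Bxt(\C,\Rep(A),\Vect)$, using $A^{**}=A$. The identities $\beta_{A^*}\circ\alpha_A=\id$ and $\alpha_A\circ\beta_{A^*}=\id$ are then proved exactly as in Corollary \ref{newcorollary}: starting from $(\B,\iota,F)$, the map $\alpha_A$ produces $(\B^*_{\C},\iota_{\Rep(A^*)},F^*_{\C})$ with $\B^*_{\C}=\Rep(A^*)\bullet\C^{\rm op}$, and applying $\beta_{A^*}$ one uses $(\B^*_{\C})\boxtimes_{\Rep(A^*)}\Vect\cong\C^{\rm op}$, the biduality $(\B^*_{\C})^*_{\C}\cong\B$ for exact module categories, the equality $(F^*_{\C})^*_{\C}=F$ for the double dual of a functor, and $\Rep(A^{**})=\Rep(A)$ to get back $(\B,\iota,F)$; the reverse composite is treated symmetrically, recovering $(\B^*_{\C},\iota_{\Rep(A^*)},F^*_{\C})$ from its dual. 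Hence $\alpha_A$ and $\beta_{A^*}$ are mutually inverse, so both are bijective; replacing $A$ by $A^*$ (legitimate since $A^{**}=A$) shows $\alpha_{A^*}$ and $\beta_A$ are mutually inverse as well, whence $\alpha_A$ and $\beta_A$ are bijective, completing (1)--(3).

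The only real work is in (3): one must correctly identify the module categories $\B\boxtimes_{\Rep(A)}\Vect$ and $(\B^*_{\C})\boxtimes_{\Rep(A^*)}\Vect$, track the ${\rm op}$-decorations through the two rounds of dualization, and invoke the biduality statement in the form needed for duals with respect to $\Vect$; conceptually, the point to get right is that $\alpha_A$ is inverse to $\beta_{A^*}$ rather than to $\beta_A$. Everything else is a mechanical specialization of Theorem \ref{mainnew} and Corollary \ref{newcorollary}.
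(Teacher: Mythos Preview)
Your proposal is correct and follows essentially the same approach as the paper: parts (1) and (2) are obtained by specializing Theorem~\ref{mainnew} to $\A=\Rep(A)$, $\M=\Vect$ via the identification $\Rep(A)^*_{\Vect}=\Rep(A^*)$, and part (3) rests on the biduality $(\B^*_{\M})^*_{\M}\cong\B$. The paper's proof is terser (it simply cites biduality for (3)), whereas you spell out the domain/codomain matching of $\alpha_A$ with $\beta_{A^*}$ and the role of $A^{**}=A$, but the substance is the same.
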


\begin{proof}
(1) and (2) follow from Theorem \ref{mainnew} since $\Rep(A)^*_{\Vect}=\Rep(A^*)$, and  
(3) follows since $(\B^*_{\M})^*_{\M}\cong \B$ as tensor categories.
\end{proof}

\begin{example}\label{equiv1}
Let $H$ be a finite dimensional {\em quasitriangular} Hopf algebra, and let $D(H)$ be its Drinfeld double. Then we have an exact sequence
$${\rm Rep}(H)\xrightarrow{\iota}{\rm Rep}(D(H))\xrightarrow{F} {\rm Rep}(H^{*{\rm cop}})$$ with respect to the standard fiber functor on ${\rm Rep}(H)$. Thus, by Corollary \ref{mainnew2}, we have an exact factorization 
$${\rm Bimod}_{{\rm Rep}(D(H))}(H^*)={\rm Rep}(D(H))^*_{{\rm Rep}(H^{*{\rm cop}})}={\rm Rep}(H^{*{\rm cop}})^{{\rm op}}\bullet {\rm Rep}(H^*).$$ 
\end{example}

\begin{example}\label{newex}
Let $G$ be a finite group scheme, let $H\subset G$ be a closed normal subgroup, and let $\omega\in H^3(G/H,\mathbb{G}_m)$.
We have an exact sequence
$${\rm Coh}(H)\xrightarrow{\iota}{\rm Coh}(G,\omega)\xrightarrow{F} {\rm Coh}(G/H,\omega)$$ of finite tensor categories with respect to the standard fiber functor on ${\rm Coh}(H)$. Since the ${\rm Coh}(G,\omega)$-module category ${\rm Coh}(G/H,\omega)$ (via $F$) is equivalent to $\mathscr{M}(H,1)$ (see \ref{gscth}), it follows from Corollary \ref{mainnew2} that we have an exact factorization $\C(G,\omega,H,1)={\rm Rep}(H)\bullet {\rm Coh}(G/H,\omega)$. (Note that this is a special case of Example \ref{main}(2), and of \cite[Proposition 3.1]{O} in the fusion case.)
\end{example}

\begin{corollary}\label{hopf}
Assume $\B={\rm Rep}(C)\bullet {\rm Rep}(A)$ is an exact factorization of finite tensor categories, where $A$ and $C$ are Hopf algebras. Then $\B$ is Morita equivalent to a tensor category with a fiber functor.
\end{corollary}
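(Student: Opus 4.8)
The plan is to replace $\B$ by a Morita-equivalent dual category and to build a fiber functor on the latter out of the exact sequence attached to the exact factorization by Theorem \ref{mainnew}. Write $\A:=\Rep(A)$ and $\C:=\Rep(C)$; by Remark \ref{permute} we have $\B=\C\bullet\A$, so $(\B,\iota_{\A},\iota_{\C})\in{\rm ExFac}(\C,\A)$. Take $\M:=\Vect$ to be the standard (forgetful) exact indecomposable $\A$-module category, so that $\A^*_{\M}=\Rep(A^*)$, and let $\N:=\B\boxtimes_{\A}\M$; as in the proof of Theorem \ref{mainnew}(2), $\N=\Mod(A_0)_{\B}$ for the exact algebra $A_0\in\A$ with $\M=\Mod(A_0)_{\A}$ (exact in $\B$ by \cite[Corollary 12.4]{EtO}), so $\N$ is an indecomposable exact $\B$-module category. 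By Theorem \ref{mainnew}(2) (equivalently Corollary \ref{mainnew2}(2)), $\beta_{\M}(\B,\iota_{\A},\iota_{\C})$ is an exact sequence
\begin{equation*}
\Rep(A^*)\xrightarrow{\ \iota\ }\B^*_{\N}\xrightarrow{\ F\ }\C^{\rm op}\boxtimes\Bnd(\M)=\Rep(C)^{\rm op}
\end{equation*}
with respect to $\M=\Vect$ (note $\Bnd(\Vect)=\Vect$). Since $\B$ and $\B^*_{\N}$ are Morita equivalent, it suffices to produce a fiber functor on $\B^*_{\N}$.

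Now $\Rep(C)^{\rm op}$ is tensor equivalent to $\Rep(C^{\rm cop})$, hence carries the standard forgetful functor $U\colon\Rep(C)^{\rm op}\to\Vect$, which is exact, faithful and monoidal. I claim that $G:=U\circ F\colon\B^*_{\N}\to\Vect$ is a fiber functor. Being a composite of exact monoidal functors, $G$ is exact and monoidal, so only faithfulness is in question. The key point is that, for an exact sequence with respect to $\M$, the restriction $F\circ\iota\colon\Rep(A^*)\to\Bnd(\M)=\Vect$ is precisely the functor defining the module structure on $\M=\Vect$, which here is the standard forgetful functor of $\Rep(A^*)$; in particular $F\circ\iota$ is faithful. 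Suppose $f\colon X\to Y$ in $\B^*_{\N}$ has $G(f)=0$. Then $F(f)=0$ since $U$ is faithful, hence $F(\Image(f))=\Image(F(f))=0$ by exactness of $F$, so $\Image(f)$ lies in $\Ker(F)=\iota(\Rep(A^*))$, say $\Image(f)\cong\iota(I)$. Then $(F\circ\iota)(I)\cong F(\Image(f))=0$, so $I=0$ because $F\circ\iota$ is faithful, whence $\Image(f)=0$ and $f=0$. Thus $G$ is faithful, $\B^*_{\N}$ has a fiber functor, and $\B$ is Morita equivalent to it.

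The step requiring the most care is the identification of $F\circ\iota$ with the forgetful functor of $\Rep(A^*)$ --- i.e.\ the statement that restricting the quotient functor of an exact sequence to its kernel recovers the chosen module-category structure; this is implicit in the construction underlying Theorem \ref{mainnew}(2) and comes down to unwinding the algebra $A_0$ and the equivalence $\B\boxtimes_{\A}\M\cong\C\boxtimes\M$ used there. Alternatively, one can avoid this verification altogether by invoking the known fact that an extension (with respect to $\Vect$) of $\Rep$ of a finite-dimensional Hopf algebra by $\Rep$ of another finite-dimensional Hopf algebra is again $\Rep$ of a finite-dimensional Hopf algebra, which directly yields the fiber functor on $\B^*_{\N}$.
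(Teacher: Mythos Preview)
Your proof is correct and follows the same route as the paper: apply $\beta_{\M}$ with $\M=\Vect$ (i.e.\ Corollary~\ref{mainnew2}(2)) to the factorization $\B=\C\bullet\A$ to obtain an exact sequence $\Rep(A^*)\to\B^*_{\N}\to\Rep(C)^{\rm op}$ with respect to $\Vect$, and conclude that the Morita-equivalent category $\B^*_{\N}$ has a fiber functor. The paper simply records ``Follows immediately from Corollary~\ref{mainnew2}'' without spelling out the last step; your explicit construction of the fiber functor as $U\circ F$ and the faithfulness argument via $\Ker(F)=\iota(\Rep(A^*))$ and the identification of $F\circ\iota$ with the forgetful functor is exactly the content that the paper leaves implicit (and your ``alternative'' at the end---that such an extension is $\Rep$ of a Hopf algebra---is likely the shortcut the authors had in mind).
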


\begin{proof}
Follows immediately from Corollary \ref{mainnew2}.
\end{proof}

\begin{corollary}\label{fibf}
Assume $G=G_1G_2$ is an exact factorization of finite group schemes, and $\omega\in H^3(G,\mathbb{G}_m)$ such that $\omega_{\mid G_1}=\omega_{\mid G_2}=1$. Then $\C(G,\omega,G_2,1)$ admits a fiber functor.
\end{corollary}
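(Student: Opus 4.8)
The plan is to realize $\C(G,\omega,G_2,1)$ as an extension, with respect to $\mathrm{Vec}$, of two finite tensor categories that each admit a fiber functor, and to then invoke the principle that such an extension again admits one. First I would use the hypotheses $G=G_1G_2$ and $\omega_{\mid G_1}=\omega_{\mid G_2}=1$ together with Corollary \ref{classexfacgrsch} to obtain the exact factorization $\mathrm{Coh}(G,\omega)=\mathrm{Coh}(G_1)\bullet\mathrm{Coh}(G_2)$ of finite tensor categories. Writing $\mathrm{Coh}(G_2)=\mathrm{Rep}(A)$ for the commutative Hopf algebra $A:=\mathscr{O}(G_2)$, this gives (after reordering the factors via Remark \ref{permute}) an element of $\mathrm{ExFac}(\mathrm{Coh}(G_1),\mathrm{Rep}(A))$.

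Next I would feed this into the map $\beta_A$ of Corollary \ref{mainnew2}(2), taken with $\C:=\mathrm{Coh}(G_1)$. Its output lies in $\mathrm{Ext}(\mathrm{Coh}(G_1)^{\mathrm{op}},\mathrm{Rep}(A^*),\mathrm{Vec})$, and the underlying finite tensor category is $\mathrm{Coh}(G,\omega)^*_{\mathscr{N}}$, where $\mathscr{N}:=\mathrm{Coh}(G,\omega)\boxtimes_{\mathrm{Rep}(A)}\mathrm{Vec}$. The one computation to carry out here is the identification of $\mathscr{N}$: the module category $\mathrm{Vec}$ over $\mathrm{Rep}(A)$ is $\mathrm{Mod}(A^*)_{\mathrm{Rep}(A)}$ with $A^*=\mathscr{O}(G_2)^*=k[G_2]$, hence $\mathscr{N}=\mathrm{Mod}(k[G_2])_{\mathrm{Coh}(G,\omega)}$, which by definition (see \ref{gscth}, taking $\psi=1$) is $\mathscr{M}(G_2,1)$. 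Therefore $\mathrm{Coh}(G,\omega)^*_{\mathscr{N}}=\C(G,\omega,G_2,1)$, and we have placed
$$\C(G,\omega,G_2,1)\in\mathrm{Ext}\big(\mathrm{Coh}(G_1)^{\mathrm{op}},\ \mathrm{Rep}(k[G_2]),\ \mathrm{Vec}\big).$$

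Finally I would observe that the kernel $\mathrm{Rep}(k[G_2])=\mathrm{Rep}(G_2)$ admits a fiber functor (the forgetful one), and that the quotient $\mathrm{Coh}(G_1)^{\mathrm{op}}$ admits a fiber functor because $\mathrm{Coh}(G_1)=\mathrm{Rep}(\mathscr{O}(G_1))$ does. The main obstacle — the only step that is not bookkeeping — is the assertion that an extension with respect to $\mathrm{Vec}$ of finite tensor categories, whose kernel and quotient both admit a fiber functor, itself admits a fiber functor; this is exactly the fact that makes Corollary \ref{hopf} work, and I would either cite it (it can be extracted from \cite{EG}) or prove it by pulling the fiber functor of the quotient back along the quotient functor and correcting the resulting (non-faithful) tensor functor on $\C(G,\omega,G_2,1)$ by means of the fiber functor of the kernel, using the concrete equivalence of \cite[Theorem 3.6]{EG}. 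Granting this, $\C(G,\omega,G_2,1)$ admits a fiber functor.
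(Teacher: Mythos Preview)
Your approach is essentially the paper's: obtain the exact factorization $\mathrm{Coh}(G,\omega)=\mathrm{Coh}(G_1)\bullet\mathrm{Coh}(G_2)$ via Corollary~\ref{classexfacgrsch}, apply $\beta_A$ from Corollary~\ref{mainnew2}(2) with $A=\mathscr{O}(G_2)$, identify $\mathscr{N}=\mathrm{Mod}(k[G_2])_{\mathrm{Coh}(G,\omega)}=\mathscr{M}(G_2,1)$, and conclude that $\C(G,\omega,G_2,1)$ sits in an exact sequence with respect to $\mathrm{Vec}$ with quotient $\mathrm{Coh}(G_1)^{\mathrm{op}}$. Steps 1--5 match the paper exactly.

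Where you diverge is the final step, and there you make it harder than it is. You flag as the ``main obstacle'' a general principle about extensions whose kernel \emph{and} quotient admit fiber functors, and propose to ``correct'' a possibly non-faithful composite using the fiber functor of the kernel. But there is nothing to correct: since $\mathscr{M}=\mathrm{Vec}$, the target $\mathrm{Coh}(G_1)^{\mathrm{op}}\boxtimes\Bnd(\mathrm{Vec})=\mathrm{Coh}(G_1)^{\mathrm{op}}$ is itself a finite tensor category, and an exact monoidal functor between finite tensor categories is automatically faithful (rigidity forces $F(X)\neq 0$ for $X\neq 0$). Hence the surjection $F:\C(G,\omega,G_2,1)\to\mathrm{Coh}(G_1)^{\mathrm{op}}$ is already a tensor functor, and composing it with the forgetful functor of $\mathrm{Coh}(G_1)^{\mathrm{op}}=\mathrm{Rep}(\mathscr{O}(G_1))^{\mathrm{op}}$ (available since $\omega_{\mid G_1}=1$) gives the fiber functor directly. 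The kernel $\mathrm{Rep}(G_2)$ and its fiber functor play no role; this is also all that is behind Corollary~\ref{hopf}. So your proof is correct, but the perceived obstacle is illusory and the detour through the kernel should be dropped.
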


\begin{proof}
By Corollary \ref{classexfacgrsch}, we have ${\rm Coh}(G,\omega)={\rm Coh}(G_1)\bullet {\rm Coh}(G_2)$. Let $\M:={\rm Mod}(k[G_2])_{{\rm Coh}(G_2)}=\Vect$ be the standard fiber functor on ${\rm Coh}(G_2)$, and let   
$$\N:={\rm Coh}(G,\omega)\boxtimes_{{\rm Coh}(G_2)}\M \cong {\rm Mod}(k[G_2])_{{\rm Coh}(G)}.$$ Then by Corollary \ref{mainnew2}, the finite tensor category
$${\rm Coh}(G,\omega)^*_{\N}\cong \C(G,\omega,G_2,1)$$
admits a fiber functor, as claimed.
\end{proof}
%

\section{Extensions of group scheme theoretical categories}\label{extfgrsctcs} 

Retain the notation from \ref{gscth}. The following result, whose proof follows immediately from Theorem \ref{mainnew}, extends \cite[Corollary 4.2]{G} to the finite case.

\begin{corollary}\label{gtext}
Any extension of a finite group scheme theoretical tensor category by another one is Morita equivalent to a tensor category with an exact factorization into a product of two group scheme theoretical subcategories. 

More precisely, 
let $\C(G_1,\omega_1,L_1,\psi_1)$ and $\C(G_2,\omega_2,L_2,\psi_2)$ be two finite group scheme theoretical categories, and let $\mathscr{N}=\mathscr{N}(L_3,\psi_3)$ be an indecomposable exact module category over $\C(G_1,\omega_1,L_1,\psi_1)$. Suppose 
$$\C(G_1,\omega_1,L_1,\psi_1)\xrightarrow{\iota}\mathscr{B}\xrightarrow{F} \C(G_2,\omega_2,L_2,\psi_2)\boxtimes \Bnd(\mathscr{N})$$ is an exact sequence with respect to $\mathscr{N}$. Then the dual tensor category $\mathscr{B}^*_{\C(G_2,\omega_2,L_2,\psi_2)\boxtimes \mathscr{N}}$ admits an exact factorization $$\mathscr{B}^*_{\C(G_2,\omega_2,L_2,\psi_2)\boxtimes \mathscr{N}}=\C(G_2,\omega_2,L_2,\psi_2)^{{\rm op}}\bullet   \C(G_1,\omega_1,L_3,\psi_3)$$
into a product of two group scheme theoretical subcategories. \qed
\end{corollary}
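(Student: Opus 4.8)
The plan is to apply Theorem \ref{mainnew}(1) directly, after identifying the relevant dual categories using the group scheme theoretical framework recalled in \ref{gscth}. The first step is to set $\A:=\C(G_1,\omega_1,L_1,\psi_1)$ and $\C:=\C(G_2,\omega_2,L_2,\psi_2)$, and to observe that the given exact sequence is precisely an element of $\Bxt(\C,\A,\M)$, where $\M=\N(L_3,\psi_3)$ is the indecomposable exact $\A$-module category attached to the pair $(L_3,\psi_3)$ via \cite[Theorem 5.7]{G2}. Here I would invoke the hypothesis that $\N$ is such a module category, which is no loss of generality since by \cite[Theorem 5.7]{G2} every indecomposable exact $\A$-module category arises this way.

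Next I would apply the map $\alpha_{\M}$ from Theorem \ref{mainnew}(1), which sends the exact sequence to the exact factorization
$$\mathscr{B}^*_{\C\boxtimes\M}=\A^*_{\M}\bullet\C^{\rm op}.$$
Using Remark \ref{permute}, this is the same as $\C^{\rm op}\bullet\A^*_{\M}$, matching the order in the statement. What remains is purely an identification of the factor $\A^*_{\M}$: by the displayed formula \eqref{dualgt}, the dual of $\C(G_1,\omega_1,L_1,\psi_1)$ with respect to the module category $\N(L_3,\psi_3)$ is $\C(G_1,\omega_1,L_3,\psi_3)$, so $\A^*_{\M}=\C(G_1,\omega_1,L_3,\psi_3)$ is again group scheme theoretical. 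Since $\C^{\rm op}$ is trivially group scheme theoretical (the opposite of $\C(G_2,\omega_2,L_2,\psi_2)$ is again of this form, as the class of group scheme theoretical categories is closed under taking opposites), both factors are group scheme theoretical subcategories, which yields the "more precisely" part and hence the first sentence as well.

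The only genuine subtlety, and the place I would be most careful, is the bookkeeping of which pair $(L,\eta)$ corresponds to the module category $\N$ over $\A=\C(G_1,\omega_1,L_1,\psi_1)$, and confirming that the hypotheses on $L_3$ (namely $\omega_{1\mid L_3}=1$, needed for $\N(L_3,\psi_3)$ to exist as in \ref{gscth}) are exactly those making \eqref{dualgt} applicable; this is implicit in the phrase "let $\mathscr{N}=\mathscr{N}(L_3,\psi_3)$ be an indecomposable exact module category." Everything else is a direct substitution into Theorem \ref{mainnew}(1) together with \eqref{dualgt}, so I expect no further obstacle; the proof is genuinely a one-line consequence, as the statement indicates with \qed.
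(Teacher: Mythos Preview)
Your proposal is correct and follows exactly the approach the paper indicates: apply Theorem \ref{mainnew}(1) with $\A=\C(G_1,\omega_1,L_1,\psi_1)$, $\C=\C(G_2,\omega_2,L_2,\psi_2)$, $\M=\mathscr{N}(L_3,\psi_3)$, and then identify $\A^*_{\M}$ via \eqref{dualgt}. The paper in fact gives no written proof beyond the line ``follows immediately from Theorem \ref{mainnew}'' and the \qed, so your unpacking (including the use of Remark \ref{permute} and the remark on opposites) is precisely what is intended.
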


\begin{example}\label{main}
Let $G_1,G_2$ be finite group schemes. 

(1) Let $\mathscr{N}:=\mathscr{N}(L_3,\psi_3)$ be an indecomposable exact module category over $\Rep(G_1)$, and suppose $$\Rep(G_1)\xrightarrow{\iota}\mathscr{B}\xrightarrow{F} {\rm Coh}(G_2,\omega_2)\boxtimes \Bnd(\mathscr{N})$$ is an exact sequence with respect to $\mathscr{N}$. Then we have 
$$\mathscr{B}^*_{{\rm Coh}(G_2,\omega_2)\boxtimes \mathscr{N}}={\rm Coh}(G_2,\omega_2)^{{\rm op}}\bullet\C(G_1,1,L_3,\psi_3).$$

(2) Let $\M:=\M(L_3,\psi_3)$ be an indecomposable exact module category over ${\rm Coh}(G_1,\omega_1)$, and suppose  
$$
{\rm Coh}(G_1,\omega_1)\xrightarrow{\iota} \mathscr{B}\xrightarrow{F} {\rm Coh}(G_2,\omega_2)\boxtimes \Bnd(\M)$$ 
is an exact sequence with respect to $\M$. 
Then we have 
$$\mathscr{B}^*_{{\rm Coh}(G_2,\omega_2)\boxtimes \M}={\rm Coh}(G_2,\omega_2)^{{\rm op}}\bullet\C(G_1,\omega_1,L_3,\psi_3).$$
\end{example}

\begin{remark}\label{Nikshych}
In Example \ref{main} one does not always get a group scheme theoretical category. In particular, an exact factorization with two group scheme theoretical factors is not always group scheme theoretical. 
(See \cite[Corollary 4.6]{Ni}, \cite{G}.)
\end{remark}

\begin{corollary}\label{kacalg}
Let $G_1,G_2$ be finite group schemes. The following hold:
\begin{enumerate}
\item
If 
$${\rm Rep}(G_1)\xrightarrow{\iota}\mathscr{B}\xrightarrow{F} {\rm Coh}(G_2,\omega_2)$$ is an exact sequence with respect to the standard fiber functor $\N(1,1)$ on ${\rm Rep}(G_1)$, then 
$$\mathscr{B}\cong\C(G,\omega,G_2,1)$$  
for a finite group scheme $G$, with an exact factorization \linebreak $G=G_1G_2$ and $\omega\in H^3(G,\mathbb{G}_m)$ such that $\omega_{\mid G_1}=1$, $\omega_{\mid G_2}=\omega_2$.
\item
If 
$${\rm Coh}(G_1,\omega_1)\xrightarrow{\iota} \mathscr{B}\xrightarrow{F}{\rm Coh}(G_2,\omega_2)\boxtimes \Bnd(\mathscr{M}(1,1))$$ is an exact sequence with respect to $\mathscr{M}(1,1)={\rm Coh}(G_1,\omega_1)$, then 
$$\mathcal{B}\cong\C(G,\omega,G_2,1)$$ for a finite group scheme $G$, with an exact factorization \linebreak $G=G_1G_2$ and    
$\omega\in H^3(G,\mathbb{G}_m)$ such that $\omega_{\mid G_1}=\omega_1$, $\omega_{\mid G_2}=\omega_2$.
\end{enumerate}
\end{corollary}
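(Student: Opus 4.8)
The plan is to deduce both statements from the apparatus already in place: Theorem~\ref{mainnew}(1), in the explicit form of Example~\ref{main}, turns the given exact sequence into an exact factorization of a dual tensor category; Corollary~\ref{classexfacgrsch} identifies that factorization with ${\rm Coh}(G,\omega)$ for a finite group scheme $G$ admitting an exact factorization $G=G_1G_2$; and finally the double duality $(\D^*_\N)^*_\N\cong\D$ recovers $\B$ as the dual of ${\rm Coh}(G,\omega)$ with respect to an appropriate indecomposable exact module category, which turns out to be of the form $\M(H,1)$.

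For (1): since ${\rm Rep}(G_1)={\rm Rep}(\mathscr{O}(G_1)^*)$ and $\mathscr{N}(1,1)$ is the standard fiber functor, we have ${\rm Rep}(G_1)^*_{\mathscr{N}(1,1)}={\rm Rep}(\mathscr{O}(G_1))={\rm Coh}(G_1,1)$ and ${\rm Coh}(G_2,\omega_2)\boxtimes\Bnd(\mathscr{N}(1,1))={\rm Coh}(G_2,\omega_2)$, so Example~\ref{main}(1) with $L_3=1$, $\psi_3=1$ gives the exact factorization
$$\B^*_{{\rm Coh}(G_2,\omega_2)}={\rm Coh}(G_2,\omega_2)^{{\rm op}}\bullet{\rm Coh}(G_1,1).$$
Since $\mathscr{O}(G_2)^{{\rm cop}}\cong\mathscr{O}(G_2)$, the opposite category ${\rm Coh}(G_2,\omega_2)^{{\rm op}}$ is again of the form ${\rm Coh}(G_2,\omega_2')$ with $\omega_2'$ cohomologous to $\omega_2$; hence Corollary~\ref{classexfacgrsch} yields $\B^*_{{\rm Coh}(G_2,\omega_2)}\cong{\rm Coh}(G,\omega)$ for a finite group scheme $G$ with exact factorization $G=G_1G_2$ and $\omega\in H^3(G,\mathbb{G}_m)$ satisfying $\omega_{|G_1}=1$ and $\omega_{|G_2}=\omega_2$. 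Finally $\B\cong({\rm Coh}(G,\omega))^*_\N$, where $\N$ is the indecomposable exact ${\rm Coh}(G,\omega)$-module category carried onto ${\rm Coh}(G_2,\omega_2)$ by this chain of equivalences; unwinding the dual sequence (\ref{ES1}) and using the classification \cite[Theorem 5.3]{G2} one identifies $\N$ with $\M(G_2,1)$, whence $\B\cong\C(G,\omega,G_2,1)$.

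For (2): now $\M(1,1)={\rm Coh}(G_1,\omega_1)$ is the regular module category over ${\rm Coh}(G_1,\omega_1)$, so ${\rm Coh}(G_1,\omega_1)^*_{\M(1,1)}={\rm Coh}(G_1,\omega_1)$, and by \cite[Theorem 3.6]{EG} the $\B$-module category ${\rm Coh}(G_2,\omega_2)\boxtimes\M(1,1)$ is just $\B$ itself. Thus Example~\ref{main}(2) (equivalently Corollary~\ref{newcorollary}) gives $\B^{{\rm op}}\cong{\rm Coh}(G_2,\omega_2)^{{\rm op}}\bullet{\rm Coh}(G_1,\omega_1)$, hence $\B\cong{\rm Coh}(G_2,\omega_2)\bullet{\rm Coh}(G_1,\omega_1)$, and Corollary~\ref{classexfacgrsch} identifies $\B\cong{\rm Coh}(G,\omega)\cong\C(G,\omega,G_2,1)$ for a finite group scheme $G=G_1G_2$ with $\omega_{|G_1}=\omega_1$ and $\omega_{|G_2}=\omega_2$, the last equivalence being read off from the exact factorization exactly as in (1).

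The step I expect to be delicate in both parts is the final one: correctly identifying which indecomposable exact module category over ${\rm Coh}(G,\omega)$ — i.e.\ which pair $(H,\psi)$ with $d\psi=\omega_{|H}$ — is transported onto ${\rm Coh}(G_2,\omega_2)$ through the equivalences above, and simultaneously keeping track of the $3$-cocycle under the passages to opposite categories so that the restriction conditions on $\omega$ come out as stated. Everything else is a routine application of Theorem~\ref{mainnew}, Corollary~\ref{classexfacgrsch}, and the description of module categories over ${\rm Coh}(G,\omega)$ recalled in \ref{gscth}.
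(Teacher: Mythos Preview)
Your argument for part~(1) is essentially the paper's: specialize Example~\ref{main}(1) to $(L_3,\psi_3)=(1,1)$, apply Corollary~\ref{classexfacgrsch}, then dualize back over the module category identified with $\M(G_2,1)$. The paper is terser (it silently absorbs the ${}^{\rm op}$ and writes the factors in the other order via Remark~\ref{permute}), but the content is the same.

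Part~(2) has a genuine gap. You correctly observe that $\M(1,1)={\rm Coh}(G_1,\omega_1)$ is the regular module, so ${\rm Coh}(G_2,\omega_2)\boxtimes\M(1,1)\cong\B$ as a $\B$-module and $\B^*_{\C\boxtimes\M}\cong\B^{\rm op}$; from this you get $\B\cong{\rm Coh}(G,\omega)$. But you then write ${\rm Coh}(G,\omega)\cong\C(G,\omega,G_2,1)$ ``exactly as in~(1)''. That equivalence is false in general: $\C(G,\omega,G_2,1)={\rm Coh}(G,\omega)^*_{\M(G_2,1)}$ is \emph{not} ${\rm Coh}(G,\omega)$ unless $G_2$ is trivial (already for constant groups ${\rm Coh}(G)$ is pointed while $\C(G,1,G_2,1)$ typically is not). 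In~(1) the final step worked because you were dualizing a \emph{different} category back over a nontrivial module; here you have already landed on $\B$ itself, so no further dualization is available to produce $\C(G,\omega,G_2,1)$.

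The paper's route in~(2) is different: it does \emph{not} identify the dual with $\B^{\rm op}$, but keeps track of it as $\B^*_{{\rm Coh}(G_2,\omega_2)}$ and then dualizes back over the module category ${\rm Coh}(G_2,\omega_2)$ viewed inside ${\rm Coh}(G,\omega)$, obtaining $\C(G,\omega,G_2,1)$ directly. To repair your argument you would have to drop the shortcut $\B^*_{\C\boxtimes\M}=\B^{\rm op}$ and instead trace, as in~(1), which ${\rm Coh}(G,\omega)$-module the category ${\rm Coh}(G_2,\omega_2)\boxtimes\M(1,1)$ becomes under the tensor equivalence with ${\rm Coh}(G,\omega)$; that identification (not a bare appeal to ``as in~(1)'') is what produces the pair $(G_2,1)$.
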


\begin{proof}
(1) Specializing Example \ref{main}(1) to $\N(1,1)$ yields $$\mathscr{B}^*_{{\rm Coh}(G_2,\omega_2)}={\rm Coh}(G_1)\bullet {\rm Coh}(G_2,\omega_2).$$ 
Thus by Corollary \ref{classexfacgrsch}, we have $\mathscr{B}^*_{{\rm Coh}(G_2,\omega_2)}={\rm Coh}(G,\omega)$ for a finite group scheme $G$, with an exact factorization $G=G_1G_2$ and $\omega\in H^3(G,\mathbb{G}_m)$ such that $\omega_{\mid G_1}=1$ and $\omega_{\mid G_2}=\omega_2$. Thus, we have $\B={\rm Coh}(G,\omega)^*_{{\rm Coh}(G_2,\omega_2)}=\C(G,\omega,G_2,1)$, as claimed. 

(2) Specializing Example \ref{main}(2) to $\M(1,1)$ yields $$\mathscr{B}^*_{{\rm Coh}(G_2,\omega_2)}={\rm Coh}(G_1,\omega_1)\bullet {\rm Coh}(G_2,\omega_2).$$ Thus by Corollary \ref{classexfacgrsch}, we have $\mathscr{B}^*_{{\rm Coh}(G_2,\omega_2)}={\rm Coh}(G,\omega)$ as in (1). Hence $\B={\rm Coh}(G,\omega)^*_{{\rm Coh}(G_2,\omega_2)}=\C(G,\omega,G_2,1)$, as claimed.
\end{proof}

\begin{corollary}\label{nice}
Let $G_1,G_2$ be finite group schemes, with $G_1$ being commutative. Then equivalence classes of exact sequences
$${\rm Coh}(G_1)\xrightarrow{\iota}\mathscr{B}\xrightarrow{F} {\rm Coh}(G_2)$$ of finite tensor categories with respect to 
the standard fiber functor on ${\rm Coh}(G_1)$ are classified by conjugacy classes of pairs $(G,\omega)$, where $G$ is a finite group scheme with an exact factorization $G=G_1^D G_2$ \footnote{Here, $G_1^D$ denotes the Cartier dual of $G_1$.}, and    
$\omega\in H^3(G,\mathbb{G}_m)$ such that $\omega_{\mid G_1^D}=\omega_{\mid G_2}=1$.
\end{corollary}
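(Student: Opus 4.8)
The plan is to reduce the statement, by dualizing, to the classification of exact factorizations of ${\rm Coh}$-type categories proved in Corollary \ref{classexfacgrsch}. Set $A:=\O(G_1)$, a finite dimensional commutative Hopf algebra; then ${\rm Coh}(G_1)=\Rep(A)$ as finite tensor categories, and the standard fiber functor on ${\rm Coh}(G_1)$ corresponds to the standard module category $\Vect$ over $\Rep(A)$ (the one coming from the forgetful functor $\Rep(A)\to\Vect$). Since $\Bnd(\Vect)=\Vect$, the target ${\rm Coh}(G_2)$ of the sequence equals ${\rm Coh}(G_2)\boxtimes\Bnd(\Vect)$, so the exact sequences to be classified are precisely the elements of $\Bxt({\rm Coh}(G_2),{\rm Coh}(G_1),\Vect)$.

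First I would apply Corollary \ref{mainnew2}(1) with this $A$ and $\C:={\rm Coh}(G_2)$, which yields a bijection
$$\alpha_A:\Bxt({\rm Coh}(G_2),{\rm Coh}(G_1),\Vect)\xrightarrow{\ \cong\ }{\rm ExFac}\bigl({\rm Coh}(G_2)^{\rm op},\Rep(A^*)\bigr).$$
Then I would identify the two factors on the right-hand side. On one side, inversion $g\mapsto g^{-1}$ is an isomorphism of group schemes $G_2\xrightarrow{\ \cong\ }G_2^{\rm op}$ and induces a tensor equivalence ${\rm Coh}(G_2)^{\rm op}\cong{\rm Coh}(G_2)$. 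On the other side, since $G_1$ is commutative, Cartier duality gives an isomorphism of Hopf algebras $A^*=\O(G_1)^*=k[G_1]\cong\O(G_1^D)$, hence $\Rep(A^*)=\Rep\bigl(\O(G_1^D)\bigr)={\rm Coh}(G_1^D)$ as tensor categories. Combining these identifications with Remark \ref{permute}, $\alpha_A$ transports equivalence classes of the exact sequences in question to equivalence classes of exact factorizations of the shape $\B'={\rm Coh}(G_1^D)\bullet{\rm Coh}(G_2)$. Finally, invoking Corollary \ref{classexfacgrsch} with $(G_1,\omega_1,G_2,\omega_2)$ replaced by $(G_1^D,1,G_2,1)$, such factorizations are classified by conjugacy classes of pairs $(G,\omega)$, where $G$ is a finite group scheme with an exact factorization $G=G_1^DG_2$ and $\omega\in H^3(G,\mathbb{G}_m)$ satisfies $\omega_{\mid G_1^D}=\omega_{\mid G_2}=1$; this is the assertion.

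The step I expect to be the main obstacle is the identification $\Rep(A^*)\cong{\rm Coh}(G_1^D)$, as this is exactly where commutativity of $G_1$ is used: for noncommutative $G_1$ the dual $\O(G_1)^*=k[G_1]$ is cocommutative but not commutative, so $\Rep(k[G_1])=\Rep(G_1)$ is not of the form ${\rm Coh}(-)$ and the right-hand side of Corollary \ref{classexfacgrsch} does not apply. The remaining point to verify is that the equivalence relations match, so that one obtains conjugacy classes of pairs $(G,\omega)$ rather than merely isomorphism classes; this should be routine bookkeeping once the definitions of $\Bxt$, ${\rm ExFac}$, and the bijection in Corollary \ref{classexfacgrsch} are unwound.
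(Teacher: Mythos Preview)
Your proposal is correct and follows essentially the same route as the paper. The paper's proof simply writes ${\rm Coh}(G_1)=\Rep(G_1^D)$ (the Cartier-dual form of your identification $\Rep(\O(G_1)^*)={\rm Coh}(G_1^D)$) and then invokes Corollary~\ref{kacalg}(1) with $G_1$ replaced by $G_1^D$; since Corollary~\ref{kacalg}(1) is itself proved by combining Corollary~\ref{mainnew2} with Corollary~\ref{classexfacgrsch}, you have just unpacked that citation into its two ingredients rather than taking a different path.
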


\begin{proof}
Since $G_1$ is commutative, ${\rm Coh}(G_1)={\rm Rep}(G_1^D)$. Thus, by  
Corollary \ref{kacalg}(1), the assignment $(G,\omega)\mapsto \B=\C(G,\omega,G_2,1)$ is bijective.
\end{proof}

\begin{example}\label{kacalg2}
Let $G=G_1G_2$ be an exact factorization of finite group schemes. Assume $B$
is a Hopf algebra fitting into an exact sequence of Hopf algebras
$$\mathscr{O}(G_2)\to B\to k[G_1].$$ 
Let $\B:=\Rep(B)$. Then we have an exact sequence $$\Rep(G_1)\xrightarrow{\iota}\mathscr{B}\xrightarrow{F} {\rm Coh}(G_2)$$ with respect to the standard fiber functor on $\Rep(G_1)$. Hence,  
by Corollary \ref{kacalg}(1), $\mathscr{B}=\C(G,\omega,G_1,1)$ for some $3$-cocycle $\omega\in H^3(G,\mathbb{G}_m)$ that is trivial on $G_1$ and $G_2$ (but not necessarily on $G$). Thus, $B$ is group scheme theoretical.

Conversely, Corollary \ref{kacalg}(1) says that if $\B$ fits into an exact sequence
$${\rm Rep}(G_1)\xrightarrow{\iota}\mathscr{B}\xrightarrow{F} {\rm Coh}(G_2,\omega_2)$$ of finite tensor categories with respect to the standard fiber functor on ${\rm Rep}(G_1)$, then $\mathscr{B}=\C(G,\omega,G_1,1)$ is the representation category of a finite dimensional group scheme theoretical quasi-Hopf algebra.
\end{example}

\end{document}